\makeatletter \@addtoreset{equation}{section} \makeatother
\renewcommand\thefigure{\thesection.\@arabic\c@figure}
\renewcommand\thetable{\thesection.\@arabic\c@table}
\newtheorem{theorem}{Theorem}[section]
\newtheorem{lemma}[theorem]{Lemma}
\newtheorem{proposition}[theorem]{Proposition}
\newtheorem{corollary}[theorem]{Corollary}
\newtheorem{conjecture}[theorem]{Conjecture}
\newtheorem{definition}[theorem]{Definition}
\theoremstyle{remark}
\newtheorem{remark}[theorem]{Remark}
\newcommand{\mc}[1]{{\mathcal #1}}
\newcommand{\bb}[1]{{\mathbb #1}}
\newcommand{\<}{\langle}
\renewcommand{\>}{\rangle}
\DeclareMathOperator{\supp}{supp} 
\DeclareMathOperator{\spanx}{span}
\DeclareMathOperator{\sgn}{sgn}
\keywords{Density fluctuations, exclusion with long jumps, fractional Burgers equation, fractional Ornstein-Uhlenbeck process}
\title{Density fluctuations for exclusion processes with long jumps}
\date{}
\author{Patr\'icia Gon\c{c}alves}
\address{Patr\'icia Gon\c{c}alves, Center for Mathematical Analysis,  Geometry and Dynamical Systems \\
Instituto Superior T\'ecnico, Universidade de Lisboa \\
Av. Rovisco Pais,1,  1049-001 Lisboa, Portugal
\newline e-mail: \rm \texttt{patricia.goncalves@math.tecnico.ulisboa.pt}}
\author{Milton Jara}
\address{ Milton Jara\\ IMPA\\ Estrada Dona Castorina 110\\ Jardim Bot\^anico\\ CEP 22460-340\\ Rio de Janeiro\\ Brazil
 \newline e-mail: \rm \texttt{mjara@impa.br}}
\begin{document}

\begin{abstract}
We show that the stationary density fluctuations of exclusion processes with long jumps, whose rates are of the form $c^\pm |y-x|^{-(1+\alpha)}$ where $c\pm$ depends on the sign of $y-x$, are given by a fractional Ornstein-Uhlenbeck process for $\alpha \in (0,\frac{3}{2})$. When $\alpha =\frac{3}{2}$ we show that the density fluctuations are tight, in a suitable topology, and that any limit point is an energy solution of the fractional Burgers equation, previously introduced in \cite{GubJar} in the finite volume setting.
\end{abstract}

\maketitle

\section{Introduction}

A classical problem on the field of interacting particle systems corresponds to the derivation of a scaling limit for the stationary\footnote{
In the physics literature, a stationary state is what in probability is called an invariant measure, and an equilibrium state corresponds to an invariant measure which is in addition reversible.
}
fluctuations of the conserved quantities of the system. The archetypical example is the {\em exclusion process}, which we describe as follows. The exclusion process is a system of particles on a given graph, on which each particle performs a continuous-time random walk with the restriction that each site on the graph is allowed to have at most one particle. Despite its simplicity, the richness of this process makes of it one of the favorite models on the realm of interacting particle systems. In these notes, we consider the exclusion process with {\em long jumps} on the one-dimensional lattice, introduced in \cite{Jar0}\footnote{
The exclusion process with arbitrary, translation invariant transition rates is well understood (see \cite{Lig}). However, as far as we know, the first article where the particular properties of the exclusion process with long jumps were studied is \cite{Jar0}.
}. In this case, the transition rates of the underlying random walk have a polynomial tail of the form $c^{\pm} |y-x|^{-(1+\alpha)}$ for some $\alpha \in (0,2)$, where $c^{\pm}$ depends on the sign of $y-x$. The Bernoulli product measures $\mu_\rho$ of density $\rho \in [0,1]$ on $\{0,1\}^{\bb Z}$ are invariant under the evolution of this process, reflecting the fact that particles are neither created nor destroyed by the dynamics and the translation invariance of the transition rates. In these notes, we will study the stationary {\em density fluctuations} of the exclusion process with long jumps starting from $\mu_\rho$. For $\alpha \in (0,3/2)$, we show that the scaling limit of the density fluctuations are given by the infinite-dimensional Ornstein-Uhlenbeck equation
\begin{equation}
\label{ec0.1}
d \mc Y_t = (\mc L^\rho)^\ast \mc Y_t dt  + \sqrt{2\rho(1-\rho)(-\mc L^{1/2})} d{\mc W}_t,
\end{equation}
where ${\mc W}_t$ is a Brownian motion, $\mc L^{\rho}$ is the generator of an $\alpha$-stable skewed L\'evy process given in \eqref{operatorLrho}, $(\mc L^\rho)^*$ is its adjoint and $\mc L^{1/2}$ is the symmetric part of $\mc L^\rho$. In the case  $\alpha =3/2$ we prove that the density fluctuation field is tight and any limit point is an energy solution of the {\em fractional Burgers equation}
\[
d \mc Y_t = (\mc L^\rho)^\ast \mc Y_t dt + m \nabla \mc Y_t^2 dt +  \sqrt{2\rho(1-\rho)(-\mc L^{1/2})} d{\mc W}_t,
\]
where $m$ is the mean of the underlying transition rate. The notion of energy solution of the previous equation was introduced in \cite{GonJar1} in the context of the KPZ equation
\[
 dh_t = \Delta h_t dt+ (\nabla h_t)^2  dt+ d{\mc W}_t.
\]
The fractional Burgers equation was introduced in \cite{GubJar} in finite volume and the notion of energy solutions was used to prove existence of solutions of this equation.  Recently in \cite{GubPer}, uniqueness of such solutions has been proved in the stationary case. This result, combined with the results in \cite{GonJar1} complete the proof of the weak KPZ universality conjecture in the stationary case. However, the methods of \cite{GubPer} do not generalize to the fractional Burgers equation. In the past few years, a great deal of research around the KPZ/Burgers equation and its universality class has been done; see \cite{Cor} for a review. A fundamental breakthrough on the mathematical understanding of the wellposedness of the KPZ equation has been given in \cite{Hai1, Hai2}, settling on firm grounds questions about existence and uniqueness of solutions of the KPZ equation. At least heuristically, the theory of regularity structures gives uniqueness of solutions for the fractional Burgers equation in the regime $\alpha \in (3/2,2]$. However, the theory of regularity structures, in its current formulation, breaks down exactly at $\alpha =3/2$, which is the parameter where our fractional Burgers equation appears.\\

The main motivation for these notes comes from the {\em strong KPZ universality conjecture}, which, roughly speaking, states that there is a universal object (the KPZ fixed point) that governs the fluctuations of stationary, non-equilibrium, conservative, one-dimensional stochastic models. Starting from various physical considerations, one important property of this universal object is its scale invariance with respect to the KPZ space-time scaling $1:2:3$. The fractional Burgers equation is invariant under this scaling, and therefore it provides a candidate for, at least, the equation satisfied by the KPZ fixed point. As far as we know, this is the first example of a non-linear equation with a meaningful notion of solution, obtained as a scaling limit of a stochastic, conservative system, which is invariant under the KPZ scaling.\footnote{
The fractional Burgers equation considered in \cite{GubJar} is defined in finite volume, and therefore the spatial scaling changes its domain.
}
In \cite{CorQuaRem}, the authors propose another candidate for the KPZ fixed point. However, it seems that even the existence of the object defined in \cite{CorQuaRem} is not proved rigorously. \\

Our method of proof is an improvement over the proof carried out in \cite{GonJar1}, where the finite-range case is treated. The main technical novelty is the treatment of the non-local part of the drift, which requires a multiscale analysis which is different from the one introduced in \cite{GonJar1} and similar to the one introduced in \cite{Gon}. The idea taken from \cite{GonJar1} is the following. Consider for simplicity a local observable of the dynamics, it could be, for example, the occupation number at the origin. Due to the conservation of the number of particles and the ergodicity of the dynamics, the local density of particles is the observable of the dynamics which takes more time to equilibrate. Therefore, if we look at the evolution on the right space-time scale, any observable of the dynamics should be asymptotically equivalent to a function of the density of particles on a block of, a macroscopical, small size around the support of the observable. The Boltzmann-Gibbs principle introduced in \cite{BroRos} states that, at first order, this function is {\em linear} on the density of particles; a claim supported by the equivalence of ensembles. The second-order Boltzmann-Gibbs principle introduced in \cite{GonJar1}, states that the second-order correction term is a {\em quadratic} function of the local density of particles. This allows to replace any {\em local} function of the dynamics by the corresponding function of the local density of particles. In these notes, the drift is a non-local function, and the multiscale analysis introduced in \cite{GonJar1} is not enough to handle this non-local function, so we introduce a second multiscale which, combined with the original one, allows to replace the drift by a quadratic function of the local density of particles. For $\alpha \leq 1$, this sophisticated method is not needed and the fluctuations can be obtained by means of classical methods. For $\alpha \in (1,1+ \frac{2}{5+\sqrt{ 33}})$, one step of the multiscale analysis of \cite{GonJar1} is needed, so the proof is not very different from the case $\alpha \leq 1$. For $\alpha \in [1+ \frac{2}{5+\sqrt{ 33}},\frac{3}{2})$, the multiscale analysis shows that the drift term vanishes in the limit, which is the reason why the Ornstein-Uhlenbeck equation \eqref{ec0.1} is the limit in those cases. The division between the cases $\alpha \in (1,1+\frac{2}{5+\sqrt{33}})$ and $\alpha \in [1+\frac{2}{5+\sqrt{33}},\frac{3}{2})$ is rather artificial, and it is done just to emphasize that in order to obtain our result in full generality, it is necessary to introduce new ideas, which come, in these notes, in the form of a refined multiscale analysis.  For $\alpha =\frac{3}{2}$ the drift makes its way up to the limit, in the form of a quadratic functional of the limiting field. This quadratic functional is extremely singular, and it is the source of trouble for the stochastic Burgers equation. Only after \cite{Hai1} we have been able to understand how to set up correctly a well-posed Cauchy problem for the (local) stochastic Burgers equation. The theory of regularity structures works thanks to the following heuristic observation: the scaling of the nonlinearity of the equation is  subcritical, with respect to the scaling of the linear part. Therefore, the theory of regularity structures makes possible to set up a Picard iteration scheme to solve it. This observation is no longer true for the fractional Burgers equation: the equation is {\em critical}, in the sense that, the nonlinear part and the linear part scale in the same way. Therefore, it is not surprising that we are not able to obtain a full convergence result for $\alpha =\frac{3}{2}$. Notice, however, that we have enough information about limit points to show that they are well defined as stochastic processes, that the nonlinearity is well defined in a strong sense, and that they solve a martingale formulation of the fractional Burgers equation.\\

These notes are organized as follows. In Section \ref{s1} we define the exclusion process with long jumps and we make precise formulations of the main results of the article. These formulations require a great deal of previous definitions, which are carried out along the section. In particular we need to define what do we understand by stationary solutions of the fractional Ornstein-Uhlenbeck equation and by stationary energy solutions of the fractional Burgers equation. A great deal of care is needed at this point. It is natural to consider the density fluctuation field as a distribution-valued stochastic process. Therefore, its action is well defined for test functions in the Schwartz space $\bb S(\bb R)$. But $\bb S(\bb R)$ is not left invariant by $\mc L^\rho$. Recall that the operator $\mc L^\rho$ is given in \eqref{operatorLrho}. In fact, for most functions $f \in \bb S(\bb R)$, $\mc L^\rho f $ does not belong to $\bb S(\bb R)$.  This fact is easy to verify in Fourier space. But stationary processes are stochastically continuous in $L^2(\bb R)$, which allows to define their action over functions in $L^2(\bb R)$ through suitable approximations.\\

The general strategy of proof of the main results of these notes is not difficult to describe. Our definitions of solutions use martingale characterizations. Therefore, we will verify that the density fluctuations satisfy an  approximate martingale problem, which, in the limit, becomes the martingale problem associated to the corresponding limiting processes. The passage to the limit is allowed by tightness arguments, complemented by some uniform estimates on the errors of the approximation.\\

%

 In Section \ref{s2} we define and compute various martingales associated to the density fluctuation fields, which will be used to show that the density fluctuation fields satisfy an approximated version of the martingale problem defined for the limiting processes.  \\
 

 In  Sections \ref{sec:tight_ini_field}, \ref{sec:tight_mart} and \ref{sec:tight_integral} we prove tightness of some terms in the martingale decomposition of the density fluctuation field, which work out for any $\alpha\in(0,2)$.  The crucial part is to deal with the drift term $A_t^n(f)$. In Section \ref{s3.1} we prove  tightness  of $A_t^n(f)$ in the case $\alpha \leq 1$. This case is a good warm-up to what follows next, since the standard proof found, for example, in Chapter 11 of \cite{KipLan} works well.\\
   
     In Section \ref{s3.2} we prove tightness of $A_t^n(f)$ in the case $1<\alpha < 1+\frac{2}{5+\sqrt{33}}$.   In this section  we also state an estimate on the variance of additive functionals of the processes of Kipnis-Varadhan's type and we use the spectral gap inequality, stated in Appendix \ref{ap:A}, in order to transform it into an effective estimate, stated as Proposition \ref{p2.3.4}. We point out that once we have established Proposition \ref{p2.3.4}, the text is completely independent of the Kipnis-Varadhan's inequality or of the spectral gap inequality. In particular, if, by some other means, we were able to prove Proposition \ref{p2.3.4}, the results of these notes follow without needing these inequalities. We also  state the form of the equivalence of ensembles that will be needed in the proofs  and we  show how to use Proposition \ref{p2.3.4} in order to get estimates on the variance of the drift term. We only have at our disposal a brute-force Cauchy-Schwarz estimate to deal with the tail part of the drift term. In this way we can show the asymptotic negligibility of jumps of size bigger than $n^{\frac{2\alpha-2}{2\alpha-1}}$, see Lemma \ref{lem_1}. This jump size corresponds to   macroscopical small jumps. The smaller jumps can be handled with Proposition \ref{p2.3.4}. A single use of this proposition is enough to fill the gap up to $n^{\frac{2\alpha-2}{2\alpha-1}}$, only for $\alpha < 1+\frac{2}{5+\sqrt{33}}$, so a more refined argument is needed for the general case. \\
     
     In Section \ref{s3.3} we prove tightness for $1+\frac{2}{5+\sqrt{33}}\leq \alpha <\frac{3}{2}$. In this case we need to introduce a multiscale analysis in order to use in an effective way Proposition \ref{p2.3.4}. The idea is the following. Proposition \ref{p2.3.4} allows to estimate the variance of space-time additive functionals of the dynamics by their spatial variance, paying as a price the inverse of the spectral gap over the support of the spatial functions in consideration. Therefore, the largest the support of the functions we consider, the less effective Proposition \ref{p2.3.4} is. The current associated to big jumps has a very big support, but its variance decays with the distance as a power law. Therefore, there is a trade-off between the support and the intensity of a big jump. The right way to exploit this trade-off is through a multiscale analysis.\\

In Section \ref{s3.4} we prove tightness for $\alpha =\frac{3}{2}$. Although the multiscale analysis of Section \ref{s3.3} still makes big jumps negligible, very small jumps are no longer negligible and a new argument is needed. The multiscale analysis of Section \ref{s3.3} stops at size $n^{1-\delta}$ for some small $\delta>0$ and it shows that the drift term is asymptotically equivalent to the square of the density on a box of size $n^{1-\delta}$. This is what is called the {\em one-block estimate} in the literature of interacting particle systems. Using the renormalization scheme introduced in \cite{GonJar1}, we show the {\em two-blocks estimate}, which states that the drift term is asymptotically equivalent to the square of the density on a box of size $\varepsilon n$. This method shows a uniform $L^2$-bound for the difference between the drift term and the square of the density, which is good enough to prove tightness by means of the Kolmogorov-Centsov's criterion stated in Proposition \ref{KCC}.\\

In Section \ref{s3.5} we show  Theorem \ref{t1.5.1}, that is  the convergence of the density fluctuation field to the stationary solution of the fractional Ornstein-Uhlenbeck equation. Once tightness is proved, the proof is standard and  relies on the martingale characterization of such solutions.  In Section \ref{s3.6} we show Theorem \ref{t1.6.2}, which is also not very difficult to prove once the uniform bound \eqref{ec3.6.1} is obtained.\\

Section \ref{s6.1} contains a discussion about how the main result of this article is related to the so-called {\em KPZ universality} conjecture. It turns out that energy solutions of \eqref{SBE} are invariant under the KPZ scaling exponents $1:2:3$. It can be checked that, at a formal level, modulo space-time rescalings, stationary solutions of equation \eqref{SBE} form a two-parameter family of self-similar processes, each of them invariant under the KPZ scaling. 
It has been conjectured (see \cite{CorQuaRem} for example) that fluctuations of one-dimensional growth interfaces converge to a universal process, dubbed the {\em KPZ fixed point}. This object should also be invariant with respect to the KPZ scaling. 
Another universality class that also has the KPZ scaling exponents is the scaling limit of energy fluctuations of one-dimensional stochastic models of heat conduction in the so-called {\em zero pressure point}. In that case, the limiting process is the Ornstein-Uhlenbeck process associated to the fractional Laplacian $\mc L^\rho$, which corresponds to \eqref{ec1.5.3} with $m=0$ see \cite{BerGonJar,JarKomOll}. The stability index $\alpha=3/2$ appears naturally in the solution of a two-dimensional Poisson equation involving local operators. The skewness parameter depends on the details of the model and it can have any admissible value. We conjecture that the family of solutions of \eqref{ec1.5.3} converges, as $m \to \infty$ and after a proper time scaling, to the KPZ fixed point. We say in that case that \eqref{ec1.5.3} {\em interpolates} between the Gaussian case $m=0$ and the KPZ fixed point. In \cite{AmiCorQua}, it is proved that the KPZ equation can be understood a {\em crossover} between the Ornstein-Uhlenbeck process associated to the usual Laplacian and the KPZ fixed point. These two universality classes have different scaling exponents, and in particular the KPZ equation is not scale invariant. Therefore, our conjecture is conceptually different, since all the interpolating processes have the same scaling exponents.\\

Section \ref{s6.2} explains how to obtain the fractional Burgers equation associated to the fractional Laplacian $-(-\Delta)^{\alpha/2}$ as a scaling limit of a family of asymmetric exclusion processes. Since the limiting equation is not scale invariant for $\alpha  \neq 3/2$, one has to tune the strength of the asymmetry with respect to the scaling. For $\alpha > 3/2$, the strength of the asymmetry decreases with $n$ (we say that the system is {\em weakly asymmetric}), and for $\alpha < 3/2$ the strength of the asymmetry increases with $n$.\\

Section \ref{s6.3} explains how to extend the results of this paper to the case on which the jump rates belong to the domain of {em normal attraction} of an $\alpha$-stable law. \\

Finally, Section \ref{s6.4} gives some examples of particle systems with long-range interactions for which the methods of this article allow to prove the results stated in this article.

\section{Notations and results}
\label{s1}
\subsection{The exclusion process}

In this section we describe what it is known in the literature as the {\em exclusion process} on the one-dimensional lattice $\bb Z$. We say that a function $p: \bb Z \to [0,\infty)$ is a {\em transition rate} if $p(0)=0$ and $p^* = \sum_{z} p(z) < +\infty$. From now on, if the set of indices of a sum is not specified, we assume that it is equal to $\bb Z$.
Let $p(\cdot)$ be a transition rate. Let $\Omega = \{0,1\}^{\bb Z}$ be the state space of a Markov process. We consider on $\Omega$ the product topology. We denote by $\eta = \{\eta(x)\,;\, x \in \bb Z\}
$ the elements of $\Omega$. We say that $x \in \bb Z$ is a {\em site} and that $\eta \in \Omega$ is a {\em configuration of particles}.
Let $\eta \in \Omega$ be a configuration of particles. We say that there is a particle at the site $x$ if $\eta(x) = 1$; otherwise we say that the site $x$ is {\em empty}. Let us consider the following dynamics. Each particle waits an exponential time of rate $p^*$ (for this reason we ask $p^*$ to be finite), at the end of which it chooses a site $y \in \bb Z$ with probability $p(y-x)/p^*$, where $x$ is the current position of the particle. If the chosen site is empty, the particle jumps into it. Otherwise it stays at its current position. In any case, a new exponential time starts afresh and the particle repeats the steps above.

The dynamics described above corresponds to a Markov process $\{\eta_t\,;\, t \geq 0\}$ defined on $\Omega$. If the number of particles is finite, it is not difficult to construct $\eta_t$ for any $t \geq 0$. When the number of particles is infinite, a detailed construction of the process $\{\eta_t\,;\, t \geq 0\}$ can be found in \cite{Lig}. In particular, the derivation of the properties we will describe below can be found there. Notice that we are not assuming anything about $p(\cdot)$ aside from $p^* < +\infty$. In particular, we can assume that particles perform arbitrarily long jumps with positive probability, which is the case we are interested in on these notes.

We say that a function $f: \Omega \to \bb R$ is {\em local} if there exists $A \subseteq \bb Z$ finite such that $f(\eta) = f(\xi)$ whenever $\eta(x) = \xi(x)$ for any $x \in A$. We say that the smallest of such sets is the {\em support} of $f$, and we denote it by $\supp(f)$. For $\eta \in \Omega$ and $x,y \in \bb Z$ we define $\eta^{x,y} \in \Omega$ as
\begin{equation}
\eta^{x,y}(z) =
\begin{cases}
\eta(y); & z=x\\
\eta(x); & z = y\\
\eta(z); & z \neq x,y.
\end{cases}
\end{equation}

For $f: \Omega \to \bb R$ and $x,y \in \bb Z$ we define $\nabla_{x,y} f : \Omega \to \bb R$ as $\nabla_{x,y} f(\eta) = f(\eta^{x,y})-f(\eta)$ for any $\eta \in \Omega$. For a local function $f: \Omega \to \bb R$ we define $Lf : \Omega \to \bb R$ as
\[
L f(\eta) = \sum_{x,y} p(y-x) \eta(x) \big(1-\eta(y)\big) \nabla_{x,y} f(\eta) \text{ for any } \eta \in \Omega.
\]

Since $f$ is a local function, we note that the sum above has a finite number of non-zero entries, and, in particular, $L f$ is well defined. The linear operator $L$ defined in this way turns out to be closable with respect to the uniform topology on the space $\mc C(\Omega)$ of continuous functions $f: \Omega \to \bb R$. Moreover, its closure (also denoted by $L$) turns out to be the generator of the process $\{\eta_t\,;\, t \geq 0\}$.

For $\rho \in [0,1]$, let $\mu_\rho$ be the probability measure in $\Omega$ given by
\[
\mu_\rho\big\{\eta\in\Omega: \eta(x_1)=1,\dots, \eta(x_\ell) =1\big\} = \rho^ \ell
\]
for any finite collection $\{x_1,\dots,x_\ell\}$ of sites in $\bb Z$. The measures $\{\mu_\rho\,;\, \rho \in [0,1]\}$ are invariant under the evolution of $\{\eta_t\,;\, t \geq 0\}$. If $\spanx\{x \in \bb Z\,;\, p(x) >0\} = \bb Z$ (that is, if $p(\cdot)$ is {\em irreducible}) the measures $\{\mu_\rho\,;\, \rho \in [0,1]\}$ are also {\em ergodic} under the evolution of $\{\eta_t\,;\, t \geq 0\}$.

\subsection{Random walks with long jumps}
\label{s1.2}

Let $\alpha \in (0,2)$ and $c^+, c^- \geq 0$ be such that $c^++c^->0$. Let us define $p: \bb Z \to [0,\infty)$ as
\begin{equation}
\label{ec1}
p(z) = \frac{c(z)}{|z|^{1+\alpha}}, \text{ where } c(z) =
\begin{cases}
c^+; & z>0,\\
0; & z=0,\\
c^-; &z <0.
\end{cases}
\end{equation}
Notice that the condition $\alpha >0$ ensures that $p(\cdot)$ is a transition rate. Let $\{x(t)\,;\, t \geq 0\}$ be the continuous-time random walk on $\bb Z$ with transition rate $p(\cdot)$. The following is a classical result which can be found, for instance, in Chapter 1, Theorem 2.4 of \cite{BorIbra}.

\begin{proposition}
\label{p1.1.2}
The process $\{x_t^n\,;\, t \geq 0\}$ given by
\begin{equation}\label{mnalpha}
x_t^n = \tfrac{1}{n} \Big( x(tn^\alpha)- m_n^\alpha t\Big), \text{ where } m_n^\alpha=
\begin{cases}
0; & \alpha <1\\
n\sum_{|x| \leq n} xp(x); & \alpha =1\\
n^\alpha \sum_x xp(x); & \alpha >1
\end{cases}
\end{equation}
converges in distribution, as $n\to\infty$, to a Markov process $\{Z_t\, ;\, t \geq 0\}$.
\end{proposition}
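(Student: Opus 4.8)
The plan is to exploit the fact that $\{x(t)\,;\,t\ge 0\}$ is itself a L\'evy process: it is the compound Poisson process whose jumps arrive at rate $p^*$ and equal $z$ with probability $p(z)/p^*$, so that
\[
E\big[e^{i\theta x(t)}\big] = \exp\big(t\,\psi(\theta)\big), \qquad \psi(\theta) = \sum_{z} p(z)\big(e^{i\theta z}-1\big).
\]
Since a deterministic linear drift and the space-time rescaling both preserve the L\'evy property, $\{x_t^n\,;\,t\ge 0\}$ is, for every $n$, a process with stationary independent increments, and a direct computation gives
\[
E\big[e^{i\theta x_t^n}\big] = \exp\Big( t\big[\, n^\alpha \psi(\theta/n) - \tfrac{i\theta}{n}\,m_n^\alpha \,\big] \Big).
\]
Because a L\'evy process is determined by its one-dimensional marginals, the whole statement reduces to showing that the characteristic exponent converges,
\[
\Psi(\theta) := \lim_{n\to\infty}\Big( n^\alpha \psi(\theta/n) - \tfrac{i\theta}{n}\,m_n^\alpha \Big),
\]
to the L\'evy--Khintchine exponent of an $\alpha$-stable law; the limit $\{Z_t\}$ is then the L\'evy process with exponent $\Psi$.

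The heart of the argument is the computation of $\Psi$. Writing
\[
n^\alpha \psi(\theta/n) = \frac1n \sum_{z\neq 0} \frac{c(z)}{|z/n|^{1+\alpha}}\big(e^{i\theta z/n}-1\big),
\]
one recognizes a Riemann sum of mesh $1/n$ for $\int_{\bb R} \tfrac{c(w)}{|w|^{1+\alpha}}(e^{i\theta w}-1)\,dw$, with $c(w)=c^+$ for $w>0$ and $c(w)=c^-$ for $w<0$. For $\alpha\in(0,1)$ the integrand is integrable near the origin, $m_n^\alpha=0$, and $\Psi$ is obtained directly. For $\alpha\in[1,2)$ the integrand is no longer integrable at $w=0$: the expansion $e^{i\theta w}-1 = i\theta w + O(w^2)$ generates a divergence cancelled precisely by the centering $\tfrac{i\theta}{n}m_n^\alpha$. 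I would verify this cancellation separately in the two subcases, using the absolutely convergent first moment $\sum_x x\,p(x)$ together with the compensated kernel $e^{i\theta w}-1-i\theta w$ when $\alpha>1$, and, in the \emph{critical} case $\alpha=1$, the truncated sum $\sum_{|x|\le n} x\,p(x) \sim (c^+-c^-)\log n$ together with $e^{i\theta w}-1-i\theta w\,\mathbf{1}_{\{|w|\le 1\}}$. In every regime the resulting $\Psi$ is the exponent of a (possibly skewed) $\alpha$-stable process whose asymmetry is governed by $c^+,c^-$, which identifies its generator with the operator $\mc L^\rho$ of \eqref{operatorLrho}.

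To make the sum-to-integral passage rigorous I would split the sum at a scale $|z|\le \varepsilon n$ versus $|z|>\varepsilon n$ and dominate using the elementary inequalities $|e^{i\theta z/n}-1|\le 2\wedge |\theta z/n|$ and $|e^{i\theta z/n}-1-i\theta z/n|\le \tfrac12|\theta z/n|^2$: the large-jump part converges to the integral over $|w|>\varepsilon$ by dominated convergence, while the small-jump part is controlled uniformly in $n$ by a quantity vanishing as $\varepsilon\to 0$. Once $\Psi$ is established pointwise, its continuity at $\theta=0$ guarantees that $e^{t\Psi(\theta)}$ is a genuine characteristic function for each $t$; stationarity and independence of increments then upgrade the one-time convergence to convergence of all finite-dimensional distributions, and the classical functional limit theorem for processes with independent increments supplies tightness in the Skorokhod space $D([0,\infty),\bb R)$, giving convergence in distribution to $\{Z_t\}$. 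The main obstacle is the delicate bookkeeping of the centering $m_n^\alpha$ in the critical case $\alpha=1$, where the $\log n$ divergences of the drift and of the imaginary part of $n\psi(\theta/n)$ must be shown to cancel exactly.
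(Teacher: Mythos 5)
Your proof is correct, and it is worth noting that the paper supplies no proof of this proposition at all: it is cited as classical, pointing to Chapter 1, Theorem 2.4 of \cite{BorIbra}. The closest the paper comes to an argument is the ``more analytical'' route it announces immediately afterwards, namely Proposition \ref{p2}, proved in Appendix \ref{proofprogen}: there one shows uniform convergence of the discrete generators $\mc L_n$ of \eqref{ec'2} to $\mc L$ of \eqref{ec2}, via summation by parts against the tail function $P(x)=\sum_{y\geq x}p(y)$ and the decay of $a(x)=x^\alpha P(x)-\tfrac{c^+}{\alpha}$, which combined with standard semigroup (Trotter--Kurtz) convergence theorems would also yield Proposition \ref{p1.1.2}. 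Your route through the L\'evy--Khintchine exponent is the probabilistic counterpart of the same computation and is genuinely different in its mechanics: you exploit that $x(\cdot)$ is compound Poisson, reduce everything to pointwise convergence of the exponent $n^\alpha\psi(\theta/n)-\tfrac{i\theta}{n}m_n^\alpha$, and handle the three regimes by matching the compensator to $\theta^\alpha$ of \eqref{eq2} --- in particular your treatment of the critical case $\alpha=1$ is right, since $i\theta\sum_{|z|\leq n}zp(z)=n\sum_z p(z)\,i\theta\tfrac{z}{n}\mathbf 1\{|z/n|\leq 1\}$ turns the centered sum into an exact Riemann sum for the kernel $e^{i\theta w}-1-i\theta w\mathbf 1\{|w|\leq 1\}$, so the two $\log n$ divergences cancel identically rather than merely asymptotically. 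Each approach buys something: yours is self-contained, makes the stable limit and its skewness explicit, and upgrades cleanly to convergence in $\mc D$ by the functional limit theorem for independent-increment processes; the paper's generator formulation is the one actually reused downstream, since Proposition \ref{p2} is invoked again for the operators $\mc L_n^\rho$ and $\mc L^\rho$ in the tightness and convergence arguments, where characteristic functions would not serve. One small correction: the generator of the limit $\{Z_t\,;\,t\geq 0\}$ is $\mc L=\mc L(c^+,c^-;\alpha)$ given in \eqref{ec2}, not $\mc L^\rho$ of \eqref{operatorLrho}; the $\rho$-dependent operator arises only later, for the fluctuation field in Lagrangian coordinates, where the exclusion rule tilts the effective jump constants to $c^+(1-\rho)+c^-\rho$ and $c^+\rho+c^-(1-\rho)$, and it coincides with your $\mc L$ only formally as a member of the same family.
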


The generator $\mc L = \mc L(c^+,c^-;\alpha)$ of the process $\{Z_t\, ;\, t \geq 0\}$ is given by
\begin{equation}
\label{ec2}
\mc L f(x) = \int\limits_{\bb R} \frac{c(y)}{|y|^{1+\alpha}} \Big( f(x+y) - f(x) - \theta^\alpha(y) f'(x) \Big) dy,
\end{equation}
where
\begin{equation}\label{eq2}
\theta^\alpha(y) =
\begin{cases}
0; & \alpha <1\\
y \mathbf{1}(|y| \leq 1); & \alpha =1\\
y; & \alpha >1.
\end{cases}
\end{equation}
Note that the generator of the process $\{x_t^n\,;\, t \geq 0\}$ is given by
\begin{equation}\label{ec'2}
\mc L_n f\big( \tfrac{x}{n} \big) = n^\alpha \sum_{y} p(y) \Big( f\big(\tfrac{x+y}{n}\big) - f\big( \tfrac{x}{n}\big) \Big) - \tfrac{m_n^\alpha}{n} f'\big(\tfrac{x}{n}\big).
\end{equation}
The generator acts on functions $f: \bb R \to \bb R$. We have denoted real numbers as $\frac{x}{n}$ to emphasize that, aside from a constant drift, the operator $\mc L_n$ is discrete in nature.
When $c^+=c^-$, that is, when the transition rate $p(\cdot)$ is {\em symmetric}, the operator $\mc L$ is a constant multiple of the {\em fractional Laplacian} $-(-\Delta)^{\alpha/2}$.

Another, more analytical, way to face a result like the one in Proposition \ref{p1.1.2} is through the convergence of generators. In fact, we have the following result.

\begin{proposition}
\label{p2}
Let $f: \bb R \to \bb R$ be a function in $\mc C^2(\bb R)$. Then,
\begin{equation}\label{resprop2.2}
\begin{split}
&\lim_{n \to \infty} \sup_{x } \Big|\mc L_n f \big( \tfrac{x}{n} \big) - \mc L f \big( \tfrac{x}{n} \big) \Big| =0,\\
&\lim_{n \to \infty} \tfrac{1}{n} \sum_{x} \Big| \mc L_n f\big(\tfrac{x}{n}\big) - \mc L f \big( \tfrac{x}{n} \big) \Big| =0.
\end{split}
\end{equation}
\end{proposition}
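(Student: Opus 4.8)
\emph{Strategy.} The plan is to recognize $\mc L_n$ as a \emph{punctured Riemann sum} of the very same integrand that defines $\mc L$, so that Proposition~\ref{p2} reduces to a quantitative Riemann-sum estimate with a singular integrand. Set $u=\tfrac{x}{n}$ and
\[
g_u(z) = \frac{c(z)}{|z|^{1+\alpha}}\big(f(u+z)-f(u)-\theta^\alpha(z)f'(u)\big),
\]
so that by definition $\mc L f(u)=\int_{\bb R} g_u(z)\,dz$. The first step is the algebraic identity $\mc L_n f(u) = \tfrac1n\sum_{z\in\frac1n\bb Z\setminus\{0\}} g_u(z)$. To obtain it, substitute $z=y/n$ in \eqref{ec'2}: since $c(\cdot)$ depends only on the sign of its argument, $n^\alpha p(nz)=\tfrac1n\tfrac{c(z)}{|z|^{1+\alpha}}$, which turns the jump part of $\mc L_n f$ into $\tfrac1n\sum_{z} \tfrac{c(z)}{|z|^{1+\alpha}}(f(u+z)-f(u))$. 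The point is that the drift $\tfrac{m_n^\alpha}{n}$ is \emph{exactly} the missing $\theta^\alpha$-correction of this sum: a direct computation from \eqref{mnalpha} gives, in all three regimes,
\[
\frac{m_n^\alpha}{n} = \frac1n \sum_{z\in\frac1n\bb Z\setminus\{0\}}\frac{c(z)}{|z|^{1+\alpha}}\,\theta^\alpha(z),
\]
the cutoff $|x|\le n$ in the case $\alpha=1$ becoming precisely $|z|\le 1$, matching $\theta^1(z)=z\mathbf{1}(|z|\le1)$. Subtracting yields the identity, so that $E_n(u):=\mc L_n f(u)-\mc L f(u)$ is exactly the error of approximating $\int_{\bb R}g_u$ by its lattice sum over $\tfrac1n\bb Z\setminus\{0\}$.

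\emph{Uniform control near the singularity.} Throughout I use that $f$, $f'$ and $f''$ are bounded (and, for the second limit, that $f$ decays at infinity, as holds for the Schwartz test functions to which the result is applied). Taylor's theorem gives, uniformly in $u$, the bound $|g_u(z)|\le C|z|^{1-\alpha}$ for $0<|z|\le1$ when $\alpha\ge1$ (the subtracted $\theta^\alpha$-term cancels the first-order part, so only $f''$ enters) and $|g_u(z)|\le C|z|^{-\alpha}$ when $\alpha<1$ (where $\theta^\alpha=0$). In both cases the exponent exceeds $-1$ — this is exactly where $\alpha<2$ is used — so there is $\delta>0$ with
\[
\int_{0<|z|\le\varepsilon}|g_u(z)|\,dz \le C\varepsilon^{\delta} \quad\text{and}\quad \frac1n\sum_{0<|z|\le\varepsilon}|g_u(z)| \le C\varepsilon^{\delta},
\]
the second bound following by comparing the lattice sum with the corresponding integral, the smallest term ($|z|=1/n$) contributing only $O(n^{-\delta})\to0$, and with $C,\delta$ independent of $u$ and $n$.

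\emph{Riemann sum away from the singularity, and the first limit.} On $\{|z|>\varepsilon\}$ the integrand $g_u$ is $C^1$, and using only that $f,f'$ are bounded one checks $\sup_u\int_{|z|>\varepsilon}|g_u'(z)|\,dz\le C_\varepsilon<\infty$ (at infinity $g_u'$ decays like $|z|^{-1-\alpha}$, and like $|z|^{-1-\alpha}$ for the extra $\theta^\alpha$-term when $\alpha>1$). The standard left-endpoint Riemann-sum estimate then yields
\[
\Big|\frac1n\sum_{|z|>\varepsilon} g_u(z) - \int_{|z|>\varepsilon} g_u(z)\,dz\Big| \le \frac{C_\varepsilon}{n}
\]
uniformly in $u$. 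Combining with the previous paragraph: given $\eta>0$, first choose $\varepsilon$ so that the two near-zero masses are below $\eta/2$, then $n$ large so that $C_\varepsilon/n<\eta/2$. This gives $\sup_u|E_n(u)|\to0$, the first limit in \eqref{resprop2.2}.

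\emph{The weighted $\ell^1$ limit, and the main difficulty.} The second limit follows from the first together with decay of $E_n$ in $u$. When $f$ is supported in $[-R,R]$ (the general Schwartz case being similar), for $|u|$ large one has $f(u)=f'(u)=0$, and only jumps reaching the support contribute, so $|\mc L_n f(u)|,|\mc L f(u)|\le C(|u|-R)^{-1-\alpha}$ uniformly in $n$. Splitting $\tfrac1n\sum_x|E_n(x/n)|$ according to $|x/n|\le M$ and $|x/n|>M$, the bulk is at most $2M\sup_u|E_n(u)|\to0$ for fixed $M$, while the tail is bounded by $C\int_{|u|>M-1}(|u|-R)^{-1-\alpha}\,du$, uniformly in $n$ and small for large $M$; an $\eta/2$ argument concludes. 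The main obstacle throughout is the singularity of $g_u$ at $z=0$ when $\alpha\ge1$, where $g_u$ is no longer integrable against a naive derivative bound (indeed it blows up like $|z|^{1-\alpha}$ for $\alpha>1$), so the usual Riemann-sum argument breaks near the origin. This is precisely why the exact drift matching of the first step — which turns the punctured sum into the sum of the $\theta^\alpha$-regularized integrand — together with the uniform mass bound $C\varepsilon^{\delta}$ (valid because $\alpha<2$) are essential.
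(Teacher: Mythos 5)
Your proof is correct, but it takes a genuinely different route from the paper's. The paper's proof (Appendix B) also begins by absorbing the drift $m_n^\alpha/n$ into the compensated increment $\psi_u(v)=f(u+v)-f(u)-vf'(u)$, but it then performs a summation by parts, replacing the rate $p(y)$ by its tail $P(y)=\sum_{z\geq y}p(z)$ and $\psi$ by $\psi'$: the singular prefactor $y^{-1-\alpha}$ is traded for $y^{-\alpha}$, the deviation $a(y)=y^\alpha P(y)-c^+/\alpha\to 0$ is controlled by a splitting at $k=\sqrt{n}$, and the main term is compared, as an ordinary Riemann sum, with the integrated-by-parts formula $\mc L^+ f(x)=c^+\int_0^\infty \psi_x'(y)\,(\alpha y^\alpha)^{-1}dy$. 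You instead keep the original integrand $g_u$, use the exact lattice identity $n^\alpha p(nz)=\tfrac1n c(z)|z|^{-1-\alpha}$ together with the exact matching of $m_n^\alpha/n$ with the lattice $\theta^\alpha$-correction (which you verify in all three regimes, including the cutoff $|z|\leq 1$ at $\alpha=1$) to recognize $\mc L_n f(u)$ as a punctured Riemann sum of $g_u$, and then split at scale $\varepsilon$: the absolute-integrability bounds $|g_u(z)|\leq C|z|^{1-\alpha}$ (resp.\ $C|z|^{-\alpha}$ for $\alpha<1$) kill both sum and integral near $0$ at cost $C\varepsilon^\delta$ — the singularity is never compared, only dominated — while a total-variation Riemann estimate handles $|z|>\varepsilon$ uniformly in $u$. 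Your version is more elementary, treats all three regimes of $\alpha$ in a single sweep (the paper details only $\alpha>1$ and declares the rest analogous), and gives a cleaner treatment of the second, weighted-$\ell^1$ limit (which the paper dismisses with ``can be proved in a similar way''). What the paper's route buys is robustness: summation by parts uses only the tail asymptotics $x^\alpha P(x)\to c^\pm/\alpha$, not the exact power-law form of $p(\cdot)$, and this is precisely what permits the extension to rates in the normal domain of attraction of an $\alpha$-stable law discussed in Section \ref{s6.3}; your key identity is special to the pure power law \eqref{ec1}.

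Two minor points to tighten if you write this up. For $\alpha=1$ the integrand $g_u$ has jump discontinuities at $z=\pm 1$ coming from $\theta^1$, so the Riemann estimate on $\{|z|>\varepsilon\}$ should be stated for piecewise $\mc C^1$ functions of bounded variation, the two jumps contributing an extra $O(\|f'\|_\infty/n)$. And your remark that the second limit requires decay of $f$ (so that a tail bound such as $|E_n(u)|\leq C(|u|-R)^{-1-\alpha}$ is available, uniformly in $n$) is accurate: the proposition as stated for general $\mc C^2$ functions is loose on this point, but the paper only ever applies it to Schwartz functions, and its own proof implicitly assumes $\|f\|_\infty,\|f'\|_\infty,\|f''\|_\infty<\infty$ as well.
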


This result is classical and a proof may be found on Appendix \ref{proofprogen}. For $x\in\bb Z$, let us define the symmetric part $s(x)$ and the antisymmetric part $a(x)$  of $p(x)$, respectively,  as
\[
s(x)= \tfrac{1}{2} \big( p(x) + p(-x)\big), \quad a(x) =\tfrac{1}{2} \big( p(x)-p(-x)\big).
\]
 Let us note that
\[
s(x) = \frac{c^++c^-}{2|x|^{1+\alpha}}, \quad a(x) = \frac{c^+-c^-}{2|x|^{1+\alpha}}\sgn{(x)}.
\]
An important functional associated to the operator $\mc L$ is the {\em Dirichlet form} defined as
\begin{equation}\label{cont dir form}
\mc E(f) = - \int\limits_{\bb R} f(x) \mc L f(x) dx = \frac{c^++c^-}{4}\iint\limits_{\bb R^2} \frac{(f(y)-f(x))^2}{|y-x|^{1+\alpha}} dx dy.
\end{equation}
The discrete counterpart of this functional is given by
\begin{equation}\label{disc dir form}
\mc E_n(f) = \frac{n^{\alpha-1}}{2} \sum_{x,y} s(y-x) \big( f\big(\tfrac{y}{n}\big) -f\big(\tfrac{x}{n}\big)\big)^2,
\end{equation}
since the previous sum is the Riemann sum of the integral $\mc E(f)$, because it can be rewritten as 
\[
\mc E_n(f) = \frac{c^++c^-}{4n^2} \sum_{x,y}  \frac{n^{1+\alpha}}{|y-x|^{1+\alpha}}{ \big( f\big(\tfrac{y}{n}\big) -f\big(\tfrac{x}{n}\big)\big)^2}.
\]

We have the following

\begin{proposition}
\label{p3}
Let $f: \bb R \to \bb R$ be a smooth function. Then,
\[
\lim_{n \to \infty} \mc E_n(f) = \mc E(f).
\]
\end{proposition}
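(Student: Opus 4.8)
The plan is to pass to the ``jump variable'' and recognize $\mc E_n(f)$ as a genuine (double) Riemann sum for $\mc E(f)$. Assume $f$ is smooth with $\|f\|_{L^2}, \|f'\|_{L^2}<\infty$ (e.g.\ $f \in \bb S(\bb R)$, which is the relevant class). Writing $z=y-x$ in the integral and $w=y-x$ in the sum, and using $s(w)=\tfrac{c^++c^-}{2|w|^{1+\alpha}}$, one gets
\[
\mc E(f) = \frac{c^++c^-}{4} \int_{\bb R} \frac{\Phi(z)}{|z|^{1+\alpha}}\, dz, \qquad \Phi(z) = \int_{\bb R} \big( f(x+z)-f(x)\big)^2\, dx,
\]
while, after factoring $n^{\alpha-1}/|w|^{1+\alpha}=n^{-2}|w/n|^{-(1+\alpha)}$,
\[
\mc E_n(f) = \frac{c^++c^-}{4}\cdot \frac{1}{n}\sum_{w\neq 0} \frac{\Phi_n(w)}{|w/n|^{1+\alpha}}, \qquad \Phi_n(w) = \frac{1}{n}\sum_{x} \big( f(\tfrac{x+w}{n})-f(\tfrac{x}{n})\big)^2.
\]
In this form $\Phi_n(w)$ is a mesh-$1/n$ Riemann sum for $\Phi(w/n)$, and the outer prefactor $\tfrac1n\sum_{w}$ is a mesh-$1/n$ Riemann sum approximating $\int_{\bb R}dz$; the claim is that the product of these two approximations converges to the double integral. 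The only delicate points are the singularity of $|z|^{-(1+\alpha)}$ at $z=0$ and the unboundedness of the domain.

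Next I would establish uniform two-sided control of the inner energies. Writing $f(x+z)-f(x)=\int_0^z f'(x+s)\,ds$ and applying Cauchy--Schwarz and Fubini gives $\Phi(z)\le z^2\|f'\|_{L^2}^2$, while trivially $\Phi(z)\le 4\|f\|_{L^2}^2$; the same computation, with the $\|f'\|_{L^2}^2$ replaced by its (uniformly bounded) Riemann sum, yields $\Phi_n(w)\le C\,(w/n)^2$ and $\Phi_n(w)\le C$ uniformly in $n$ and $w$, with $C$ depending only on $f$. Consequently, setting $F_n(w/n):=\Phi_n(w)/|w/n|^{1+\alpha}$,
\[
\big| F_n(w/n)\big| \le \psi(w/n), \qquad \psi(z) := C\,\min\big( |z|^{1-\alpha}, |z|^{-(1+\alpha)}\big),
\]
and $\psi$ is integrable on $\bb R$: near $z=0$ it behaves like $|z|^{1-\alpha}$ with $1-\alpha>-1$, and near infinity like $|z|^{-(1+\alpha)}$ with $1+\alpha>1$. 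This is exactly where the range $\alpha\in(0,2)$ is used, and it supplies the dominating function for a dominated Riemann-sum argument.

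With the domination in hand I would finish by truncation. Fix $\varepsilon>0$ small and $M<\infty$ large and split the $w$-sum into $|w/n|<\varepsilon$, $\varepsilon\le |w/n|\le M$, and $|w/n|>M$. The first and third pieces are bounded, uniformly in $n$, by $\tfrac1n\sum\psi(w/n)$ restricted to those ranges, which lie within $o(1)$ of $\int_{|z|<\varepsilon}\psi$ and $\int_{|z|>M}\psi$; both integrals vanish as $\varepsilon\to0$ and $M\to\infty$, and the same bounds control the corresponding tails of $\mc E(f)$. On the bulk $\varepsilon\le|z|\le M$ the integrand $\Phi(z)/|z|^{1+\alpha}$ is continuous and bounded, the discrete points $w/n$ become dense, and $\Phi_n$ converges to $\Phi$ along them (Riemann-sum convergence for the smooth integrand $(f(\cdot+w/n)-f(\cdot))^2$), so the restricted outer Riemann sum converges to $\int_{\varepsilon\le|z|\le M}\Phi(z)/|z|^{1+\alpha}\,dz$. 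Letting $n\to\infty$ first, then $\varepsilon\to0$ and $M\to\infty$, gives $\mc E_n(f)\to\mc E(f)$.

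The main obstacle is the behavior at the origin: the weight $|z|^{-(1+\alpha)}$ is non-integrable there, so convergence cannot be read off from the integrand alone and must be rescued by the cancellation $\Phi(z)\lesssim z^2$ coming from the increment $f(x+z)-f(x)$. The only genuine work is to make the quadratic bound on $\Phi_n$ \emph{uniform in $n$}, so that the discrete small-jump sum is controlled on the same footing as its continuous counterpart; once that uniformity is secured, the large-jump tail and the bulk convergence are routine.
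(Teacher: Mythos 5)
Your proof is correct and takes essentially the same approach as the paper, which dispatches Proposition \ref{p3} in one line as ``a simple consequence of Proposition \ref{p2} or simply by noticing the limit of the Riemann sum to the double integral''; your argument is precisely that Riemann-sum convergence, carried out in full detail. The uniform domination $\Phi_n(w)/|w/n|^{1+\alpha} \le C \min\big(|w/n|^{1-\alpha}, |w/n|^{-(1+\alpha)}\big)$, integrable exactly because $\alpha \in (0,2)$, is the control the paper leaves implicit to handle the singularity at the origin and the tail at infinity.
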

This proposition is a simple consequence of Proposition \ref{p2} or simply by noticing the limit of the Riemann sum to the double integral.

\subsection{The Ornstein-Uhlenbeck process}
Let $\mc C_c^\infty(\bb R)$ be the set of infinitely differentiable functions $f: \bb R \to \bb R$ with compact support. For $f \in \mc C_c^\infty(\bb R)$ and $k,\ell \in \bb N_0$\footnote{
We write $\bb N = \{1,2,...\}$ and $\bb N_0 = \{0\}\bigcup\bb N$.
}
we define
\[
\big\| f\big\|_{k,\ell,\infty} = \sup_{x \in \bb R} \big(1+x^2\big)^{k/2} \big| f^{(\ell)}(x)\big|.
\]
The Schwartz space of test functions is defined as the closure of $\mc C_c^\infty(\bb R)$ with respect to the metric
\[
d(f,g)  = \sum_{k,\ell} \frac{1}{2^{k+\ell}} \min \big\{ 1, \|f-g\|_{k,\ell,\infty}\big\}.
\]
This space is denoted by $\mathbb S(\bb R)$ and it coincides with the space of infinitely differentiable functions $f:\bb R \to \bb R$ such that $\|f\|_{k,\ell,\infty} <+\infty$ for any $k,\ell \in \bb N_0$. The space $\mathbb S'(\bb R)$ of {\em tempered distributions} is defined as the topological dual of $\mathbb S(\bb R)$. We will consider in $\mathbb S'(\bb R)$ the weak-$\star$ topology. We denote by
\[
\|f\| = \Big(\int_{\bb R} f(x)^2 dx \Big)^{1/2}
\]
the $L^2(\bb R)$-norm of $f$ and we denote by $\<f,g\> = \int f(x)g(x) dx$ the inner product between $f$ and $g$ in $L^2(\bb R)$.

One of the simplest examples of $\mathbb S'(\bb R)$-valued random variables is the so-called {\em white noise}. We say that an $\mathbb S'(\bb R)$-valued random variable $\omega$ is a white noise of variance $\chi$ if for any $f \in \mathbb S(\bb R)$ the real-valued random variable $\omega(f)$ has a Gaussian distribution of mean zero and variance $\chi \|f\|^2$.

Let $T >0$ be a fixed number. This number $T$ will be fixed up to the end of these notes. For a given topological space $E$ we denote by $\mc C([0,T]; E)$ the space of continuous functions from $[0,T]$ to $E$ and by $\mc D([0,T]; E)$ the space of {\em c\`adl\`ag} trajectories from $[0,T]$ to $E$.

We say that an $\mathbb S'(\bb R)$-valued process $\{\mc W_t; t \in [0,T]\}$ is a (standard) {\em Brownian motion} if for any function $f \in \mathbb S(\bb R)$ the real-valued process $\{\mc W_t(f); t \in [0,T]\}$ is a Brownian motion of variance $\|f\|^2$. For more details about the construction of this and other distribution-valued processes we refer to \cite{Wal}.

Let us recall the definition of the operator $\mc L$ given in \eqref{ec2}. In order to highlight the dependence of $\mc L$ on $c(\cdot)$ and $\alpha$, we use the notation   $\mathcal L(c^+,c^-;\alpha)$ for the choice of $c(\cdot)$ given in \eqref{ec1}. We will use the notation $\mc L(c^+,c^-;\alpha)$ whenever we need to stress this dependence. Let $\mc L^*$ be the adjoint of $\mc L$ in $L^2(\bb R)$ and let $$\mc S= \frac{1}{2}(\mc L+\mc L^*)$$ be the symmetric part of $\mc L$. We note that
\[
\mc L^*=\mc L(c^-,c^+;\alpha) \text{ and } \mc S = \mc  L(\tfrac{c^++c^-}{2}, \tfrac{c^++c^-}{2}; \alpha).
\]
Note that above, when we write $\mc L(c^-,c^+;\alpha)$ this corresponds to \eqref{ec2} but taking $c(\cdot)$ in \eqref{ec1} with $c^+$ exchanged with $c^-$.

Now we want to define what we understand as a {\em stationary solution} of the infinite-dimensional Ornstein-Uhlenbeck equation
\begin{equation}
\label{ec1.4.1}
d \mc Y_t = \mc L^* \mc Y_t dt + \sqrt{2\chi (-\mc S)} d \mc W_t,
\end{equation}
where $\{\mc W_t; t \in [0,T]\}$ is an $\mathbb S'(\bb R)$-valued Brownian motion and $\chi>0$ is fixed. A first na\"ive definition could be the following. We say that an $\mathbb S'(\bb R)$-valued process $\{\mc Y_t; t \in [0,T]\}$ is a solution of the {\em martingale problem} associated to \eqref{ec1.4.1} if for any differentiable trajectory $f:[0,T] \to \mathbb S(\bb R)$ the process
\begin{equation}\label{mart_OU}
\mc Y_t(f_t) - \mc Y_0(f_0) -\int_0^t \mc Y_s \big( (\partial_s+\mc L)f_s\big) ds
\end{equation}
is a martingale of quadratic variation
\[
2 \chi \int_0^t \<f_s,-\mc Sf_s\> ds
\]
with respect to the natural filtration associated to $\{\mc Y_t; t \in [0,T]\}$. This formulation has a serious problem: for general test functions $f$, $\mc Lf$ does not belong to $\mathbb S(\bb R)$ and therefore $\mc Y_t(\mc Lf)$ is not defined. The solution passes through the following property:

\begin{proposition}
\label{p1.4.1} The operator $\mc L : \mathbb S(\bb R) \to L^2(\bb R)$ is continuous. Moreover,  for any $f \in \mathbb S(\bb R)$, $\mc L f$ is bounded and infinitely differentiable.
\end{proposition}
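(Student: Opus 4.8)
The plan is to pass to the Fourier side, as anticipated by the remark in the introduction. With the convention $\hat f(\xi)=\int_{\bb R}f(x)e^{-i\xi x}\,dx$, introduce the Fourier multiplier
\[
\psi(\xi)=\int_{\bb R}\frac{c(y)}{|y|^{1+\alpha}}\big(e^{i\xi y}-1-i\xi\,\theta^\alpha(y)\big)\,dy .
\]
First I would substitute the inversion formula for $f(x+y)$, $f(x)$ and $f'(x)$ into the definition \eqref{ec2} of $\mc L$ and exchange the order of integration to obtain
\begin{equation}
\label{inv}
\mc L f(x)=\frac{1}{2\pi}\int_{\bb R}\psi(\xi)\,\hat f(\xi)\,e^{i\xi x}\,d\xi .
\end{equation}
The interchange is the first point requiring care, and it is exactly where the compensator $\theta^\alpha$ earns its place. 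It is licensed by Fubini once one checks that the integral of $\frac{c(y)}{|y|^{1+\alpha}}\,|\hat f(\xi)|\,\big|e^{i\xi y}-1-i\xi\theta^\alpha(y)\big|$ over $(\xi,y)$ is finite. Near $y=0$ one bounds the bracket by $C|\xi y|$ when $\alpha<1$ and by $C\xi^2y^2$ when $\alpha\ge1$, leaving the weights $|y|^{-\alpha}$, respectively $|y|^{1-\alpha}$, both integrable near the origin because $\alpha<2$. For $|y|>1$ the bracket is bounded by $2$ when $\alpha\le1$, since $\theta^\alpha$ vanishes there, and by $2+|\xi||y|$ when $\alpha>1$; in either case the kernel is integrable against this bound on $\{|y|>1\}$, the linear term being controlled precisely because $\alpha>1$ whenever it is present. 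Since $\hat f\in\mathbb S(\bb R)$, the remaining $\xi$-integrals converge and Fubini applies.

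Everything then reduces to estimates on $\psi$. The key point I would establish is that $\psi$ is bounded on $\{|\xi|\le 1\}$ and satisfies $|\psi(\xi)|\le C|\xi|^2$ for $|\xi|\ge 1$. The cleanest route is a scaling identity: the change of variables $y\mapsto y/|\xi|$ in the defining integral, together with the fact that $c$ depends only on $\sgn(y)$, shows that for $\alpha\neq 1$ the multiplier is homogeneous, $\psi(\xi)=|\xi|^\alpha\,\psi(\sgn\xi)$, so that $|\psi(\xi)|=C_\pm|\xi|^\alpha$; boundedness near the origin and the quadratic bound at infinity both follow at once from $\alpha<2$. The only borderline is $\alpha=1$, where the truncation in $\theta^\alpha(y)=y\mathbf 1(|y|\le1)$ breaks the exact scaling and produces a harmless logarithmic correction $|\psi(\xi)|\le C|\xi|(1+|\log|\xi||)$, still bounded near $0$ and still $\le C|\xi|^2$ for $|\xi|\ge1$. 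I do not expect the scaling computation itself to be the difficulty; the care lies in treating the three regimes of $\theta^\alpha$ uniformly.

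With this multiplier bound the three assertions follow. For continuity into $L^2(\bb R)$, Plancherel gives $\|\mc L f\|^2=\frac{1}{2\pi}\int_{\bb R}|\psi(\xi)|^2|\hat f(\xi)|^2\,d\xi$; splitting at $|\xi|=1$ and using $\xi^4|\hat f(\xi)|^2=|\widehat{f''}(\xi)|^2$ yields $\|\mc L f\|^2\le C\big(\|f\|^2+\|f''\|^2\big)$. Since $\|f\|\le C\|f\|_{2,0,\infty}$ and $\|f''\|\le C\|f\|_{2,2,\infty}$, obtained by dividing by $1+x^2$ and integrating, this gives $\|\mc L f\|\le C\big(\|f\|_{2,0,\infty}+\|f\|_{2,2,\infty}\big)$, which is continuity with respect to finitely many of the Schwartz seminorms. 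For boundedness and smoothness I would use \eqref{inv} directly: because $\psi$ grows at most polynomially while $\hat f$ decays faster than any polynomial, the function $\xi\mapsto(i\xi)^k\psi(\xi)\hat f(\xi)$ lies in $L^1(\bb R)$ for every $k\in\bb N_0$, so one may differentiate under the integral sign to get $(\mc L f)^{(k)}(x)=\frac{1}{2\pi}\int_{\bb R}(i\xi)^k\psi(\xi)\hat f(\xi)e^{i\xi x}\,d\xi$, bounded uniformly in $x$; hence $\mc L f\in\mc C^\infty(\bb R)$ with all derivatives bounded. The main obstacle, and the only genuinely technical step, is the uniform control of $\psi$ across all $\alpha\in(0,2)$ — in particular justifying the Fubini interchange and handling the $\alpha=1$ borderline — after which both displayed conclusions are routine consequences of the representation \eqref{inv}.
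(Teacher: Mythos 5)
Your proof is correct, but there is nothing in the paper to compare it against: Proposition \ref{p1.4.1} is stated there without any proof, the only hint being the introductory remark that the failure of $\bb S(\bb R)$-invariance of $\mc L$ ``is easy to verify in Fourier space.'' Your multiplier argument is exactly the computation that remark gestures at, and it goes through: the symbol $\psi$ is well defined thanks to the bounds $|e^{i\xi y}-1|\leq\min(|\xi y|,2)$ and $|e^{i\xi y}-1-i\xi y|\leq\min(\tfrac{1}{2}\xi^2y^2,\,2|\xi y|)$ adapted to the three regimes of $\theta^\alpha$ in \eqref{eq2}; the exact homogeneity $\psi(\xi)=|\xi|^\alpha\psi(\sgn\xi)$ for $\alpha\neq1$ (the substitution $y\mapsto y/|\xi|$ is legitimate since $c(\cdot)$ in \eqref{ec1} depends only on the sign) and the logarithmic correction at $\alpha=1$ give $|\psi(\xi)|\leq C(1+\xi^2)$; Plancherel then yields $\|\mc Lf\|\leq C\big(\|f\|_{2,0,\infty}+\|f\|_{2,2,\infty}\big)$, which is continuity for the metric defining $\bb S(\bb R)$ since the map is linear; and the rapid decay of $\hat f$ against the polynomial growth of $\xi^k\psi(\xi)$ licenses differentiation under the integral sign, giving boundedness and smoothness of $\mc Lf$ together with all its derivatives. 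One small wording slip: in your Fubini check for $\alpha<1$, the weight $|y|^{-\alpha}$ is integrable at the origin because $\alpha<1$ in that regime (the condition $\alpha<2$ is what saves the weight $|y|^{1-\alpha}$ arising in the compensated cases $\alpha\geq1$); as written, the justification conflates the two cases, though the mathematics is unaffected. It is also worth noting that a purely physical-space proof in the style of the paper's Appendix \ref{aux_computations} is available: split \eqref{ec2} at $|y|=1$, control the region $|y|\leq1$ by the Taylor bound \eqref{useful_bound} together with Lemma \ref{palta} (for $\alpha<1$ a first-order bound suffices), and control $|y|\geq1$ by the decay of $f$, which yields the quantitative estimate $|\mc Lf(x)|=O\big(|x|^{-(1+\alpha)}\big)$ and hence $\mc Lf\in L^2(\bb R)$ with the same seminorm control. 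That route buys explicit spatial decay; your Fourier route instead buys the sharp symbol information $|\psi(\xi)|\leq C|\xi|^\alpha$ near the origin, which is precisely what explains the paper's observation that $\mc Lf$ generically fails to lie in $\bb S(\bb R)$ (the symbol is not smooth at $\xi=0$).
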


Note that, by the definition of $\mc L$ given in \eqref{ec2}, for any $f \in \bb S(\bb R)$, $\mc L f' = (\mc Lf)'$ and in particular, from the previous proposition, $(\mc Lf)^{(\ell)}$ is bounded for any $\ell \in \bb N_0$.

\begin{definition}
We say that an $\mathbb S'(\bb R)$-valued process $\{\mc Y_t; t \in [0,T]\}$ defined on some probability space $(X,\mc F,P)$ is {\em stationary}\footnote{
This property assures us that the application $f \mapsto \mc Y_t(f)$ from $\mathbb S(\bb R) \subseteq L^2(\bb R)$ into $L^2(\Omega)$ is uniformly continuous, and equicontinuous on $t$. Any other property that would ensure this equicontinuity  would serve as a substitute to stationarity; nonetheless stationarity will be a consequence of other hypotheses needed to prove our main results, so we will not give too much attention to this point.
}
if for any $t \in [0,T]$ the $\mathbb S'(\bb R)$-valued random variable $\mc Y_t$ is a white noise of variance $\chi$.
 \end{definition}

 The constant $\chi$ above will be the same appearing in \eqref{ec1.4.1}. An important property of a stationary process is that $\mc Y_t(f)$ can be extended, by continuity, to any $f \in L^2(\bb R)$. In particular, for any $f \in \mathbb S(\bb R)$, the random variable $\mc Y_t( \mc L f)$ makes sense by Proposition \ref{p1.4.1}. In a more precise way, let $\psi \in \mathbb S(\bb R)$ be given by $\psi(x) = e^{-x^2}$ for any $x \in \bb R$ and define $\psi_\varepsilon \in \mathbb S(\bb R)$ as $\psi_\varepsilon(x) = \psi(\varepsilon x)$ for any $x \in \bb R$. Then $\psi_\varepsilon \mc L f \in \mathbb S(\bb R)$ for any $\varepsilon >0$ and any $f \in \mathbb S(\bb R)$. Moreover, $\psi_\varepsilon \mc L f \to \mc L f$ in $L^2(\bb R)$, as $\varepsilon \to 0$, from where we conclude that $\mc Y_t(\psi_\varepsilon \mc L f)$ converges in $L^2(P)$, as $\varepsilon\to0$, to a random variable which we call $\mc Y_t(\mc L f)$. \footnote{
Note that  $\mc Y_t(\mc Lf)$ is well defined up to a set of null probability. This set depends on the choice of the function $f$ and therefore we can not {\em a priori} think about $\mc Y_t(\mc L f)$ as a distribution-valued random variable. Nevertheless, stationarity and Proposition \ref{p1.4.1} can be used to show that $\mc Y_t(\mc L f)$ is indeed a distribution-valued random variable.
}

In order to give rigorous meaning to the Ornstein-Uhlenbeck equation \eqref{ec1.4.1} in a proper sense, we need to define the following object:
\begin{lemma}
\label{l1.4.2}
Let $\{\mc Y_t; t \in [0,T]\}$ be a stationary process. Let $f: [0,T] \to \bb S(\bb R)$ be differentiable. Then the process $\{\mc I_t(f); t \in [0,T]\}$ given by
\[
\mc I_t(f) = \lim_{\varepsilon \to 0} \int_0^t \mc Y_s(\psi_\varepsilon \mc L f_s) ds
\]
is well defined.
\end{lemma}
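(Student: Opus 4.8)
The plan is to show that the family $\big\{\int_0^t \mc Y_s(\psi_\varepsilon \mc L f_s)\,ds\big\}_{\varepsilon>0}$ is Cauchy in $L^2(P)$ as $\varepsilon \to 0$; since $L^2(P)$ is complete, the limit $\mc I_t(f)$ then exists, and it is the natural object one would like to denote by $\int_0^t \mc Y_s(\mc L f_s)\,ds$. First I would check that for each fixed $\varepsilon>0$ the integrand is meaningful. By Proposition \ref{p1.4.1} the operator $\mc L$ maps $\bb S(\bb R)$ continuously into $L^2(\bb R)$, and since $f:[0,T]\to\bb S(\bb R)$ is differentiable, hence continuous, the curve $s \mapsto \mc L f_s$ is continuous from $[0,T]$ into $L^2(\bb R)$. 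Because $\psi_\varepsilon \mc L f_s \in \bb S(\bb R)$, stationarity of $\{\mc Y_s\}$ allows one to evaluate $\mc Y_s(\psi_\varepsilon \mc L f_s)$; together with the joint measurability of the field in $(s,\omega)$ this guarantees that $s \mapsto \mc Y_s(\psi_\varepsilon \mc L f_s)$ is measurable and that the time integral defines an element of $L^2(P)$.

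For the Cauchy estimate, the essential difficulty is that stationarity controls only the one-time marginals of $\mc Y_s$ (each a white noise of variance $\chi$) and gives no information on the joint law at distinct times. I would circumvent this by collapsing the two time variables onto the diagonal with Cauchy--Schwarz. Writing $g_s^{\varepsilon,\varepsilon'} = (\psi_\varepsilon - \psi_{\varepsilon'})\mc L f_s$, and using $\big(\int_0^t h_s\,ds\big)^2 \le t \int_0^t h_s^2\,ds$ pathwise followed by Tonelli, I obtain (with $E$ denoting expectation under $P$)
\[
E\Big[\Big(\int_0^t \mc Y_s(g_s^{\varepsilon,\varepsilon'})\,ds\Big)^2\Big] \le t \int_0^t E\big[\mc Y_s(g_s^{\varepsilon,\varepsilon'})^2\big]\,ds = \chi\, t \int_0^t \big\|g_s^{\varepsilon,\varepsilon'}\big\|^2\,ds,
\]
where the last equality is precisely stationarity, since $\mc Y_s$ is a white noise of variance $\chi$ and $g_s^{\varepsilon,\varepsilon'} \in L^2(\bb R)$. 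This reduces the whole matter to the deterministic statement that $\int_0^t \|(\psi_\varepsilon - \psi_{\varepsilon'})\mc L f_s\|^2\,ds \to 0$ as $\varepsilon,\varepsilon' \to 0$.

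Finally I would establish this deterministic convergence by dominated convergence in the time variable. For each fixed $s$, the remark preceding the lemma gives $\psi_\varepsilon \mc L f_s \to \mc L f_s$ in $L^2(\bb R)$, hence $\|(\psi_\varepsilon - \psi_{\varepsilon'})\mc L f_s\| \to 0$. For the domination, since $0 \le \psi_\varepsilon \le 1$ pointwise one has $\|(\psi_\varepsilon - \psi_{\varepsilon'})\mc L f_s\|^2 \le \|\mc L f_s\|^2$, and the continuity of $s \mapsto \mc L f_s$ into $L^2(\bb R)$ on the compact interval $[0,t]$ yields $\sup_{s\in[0,t]} \|\mc L f_s\|^2 < \infty$, a constant integrable bound. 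Dominated convergence forces the integral to vanish, so the family is Cauchy in $L^2(P)$ and $\mc I_t(f)$ is well defined. The only genuinely delicate point, beyond the routine measurability bookkeeping for the time integral, is the passage through the diagonal used above: it is what lets the argument rely solely on the stationary marginals, rather than on any unavailable information about the temporal correlations of the field.
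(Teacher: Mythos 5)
Your proof is correct and takes essentially the same route as the paper: the paper's one-line argument likewise reduces the $L^2(P)$-Cauchy property of the time integrals, via the white-noise identity $E[\mc Y_s(g)^2]=\chi\|g\|^2$, to the convergence $\psi_\varepsilon \mc L f_s \to \mc L f_s$ in $L^2(\bb R)$. The only (immaterial) difference is that the paper invokes this convergence uniformly in $s$, whereas you use pointwise convergence in $s$ dominated by the constant bound $\sup_{s\in[0,t]}\|\mc L f_s\|^2$, obtained from the continuity of $s\mapsto \mc L f_s$ into $L^2(\bb R)$.
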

\begin{proof}
It is enough to observe that $\psi_\varepsilon \mc L f_s \to \mc L f_s$, as $\varepsilon \to 0$, in $L^2(\bb R)$, uniformly in $s$ and to note that $\mc Y_s$ is a linear functional  and a white noise with variance $\chi$.
\end{proof}

The previous lemma explains how to define the integral term in \eqref{mart_OU}, that is, the integral term on the martingale problem associated to the equation \eqref{ec1.4.1}. Let $\{\mc Y_t\,;\, t \in [0,T]\}$ be a stationary process and let $f: [0,T] \to \bb S(\bb R)$ be differentiable. We define
\[
\int_0^t \mc Y_s\big( \mc L f_s \big) ds =  \mc I_t(f).
\]
We say that a stationary process $\{\mc Y_t\,;\, t \in [0,T]\}$ is a {\em stationary solution} of equation \eqref{ec1.4.1} if for any differentiable function $f: [0,T] \to \bb S(\bb R)$ the process
\[
\mc Y_t(f_t) - \mc Y_0(f_0) - \int_0^t \mc Y_s\big( (\partial_s+\mc L) f_s\big) ds
\]
is a continuous martingale of quadratic variation
\[
2 \chi \int_0^t \<f_s, - \mc S f_s\> ds.
\]
Note that $\<f_s, - \mc S f_s\> = \mc E(f_s)$, where $\mc E(\cdot)$ is defined in \eqref{cont dir form}. The following proposition explains in which sense the stationary solutions of \eqref{ec1.4.1} are unique.

\begin{proposition}
\label{p1.4.3}
Two stationary solutions of \eqref{ec1.4.1} have the same distribution.
\end{proposition}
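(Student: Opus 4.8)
The plan is to show that the law of a stationary solution of \eqref{ec1.4.1} is completely determined by its finite-dimensional distributions, and that these are forced to be mean-zero Gaussian with a covariance depending only on $\mc L$, $\mc S$ and $\chi$. By the definition of stationarity, for each fixed $t$ the variable $\mc Y_t$ is a white noise of variance $\chi$, so the one-time marginals already coincide for any two solutions; all the content therefore lies in the joint law across different times, that is, in the two-time covariances together with joint Gaussianity.

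First I would fix $g \in \bb S(\bb R)$ and $0 \le s \le t \le T$ and run the martingale problem backwards in time. Let $\{e^{r \mc L}\}_{r \ge 0}$ denote the semigroup generated by $\mc L$ (the $\alpha$-stable semigroup, a Fourier multiplier whose symbol has nonpositive real part), and set $\phi_r = e^{(t-r)\mc L} g$ for $r \in [0,t]$. Then $\phi_t = g$ and $(\partial_r + \mc L)\phi_r = 0$, so the integral term in the martingale problem vanishes and
\[
M_r := \mc Y_r(\phi_r) - \mc Y_0(\phi_0), \qquad r \in [0,t],
\]
is a continuous martingale with quadratic variation $2\chi \int_0^r \<\phi_u, -\mc S \phi_u\> du$. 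Taking conditional expectations and cancelling the $\mc F_s$-measurable constant $\mc Y_0(\phi_0)$ gives $E[\mc Y_t(g)\mid \mc F_s] = \mc Y_s(e^{(t-s)\mc L}g)$, whence, by stationarity,
\[
E\big[\mc Y_s(f)\,\mc Y_t(g)\big] = \chi \<f,\, e^{(t-s)\mc L} g\>
\]
for every $f \in \bb S(\bb R)$. This identifies all two-time covariances in terms of the data.

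To upgrade this to equality of all finite-dimensional distributions I would use that $M_r$ has \emph{deterministic} quadratic variation, so by the Dambis--Dubins--Schwarz time change it is a time-changed Brownian motion run along a deterministic clock, hence a Gaussian martingale; equivalently, $\exp\{i M_r + \tfrac12 \<M\>_r\}$ is a complex martingale, which yields the explicit conditional characteristic function
\[
E\big[\exp\big(i(\mc Y_t(g) - \mc Y_s(\phi_s))\big)\,\big|\,\mc F_s\big] = \exp\Big(-\chi \int_s^t \<\phi_u, -\mc S \phi_u\> du\Big).
\]
Fixing a partition $0 \le t_1 \le \cdots \le t_k \le T$ and test functions $g_1,\dots,g_k$, I would apply this on each subinterval to a test path that is evolved backward between the nodes and incremented by $\lambda_j g_j$ at each node $t_j$ (equivalently, iterating the tower property); this shows that $\sum_j \lambda_j \mc Y_{t_j}(g_j)$ is Gaussian with a variance computable from $\mc L$ and $\mc S$, so the vector $(\mc Y_{t_1}(g_1),\dots,\mc Y_{t_k}(g_k))$ is jointly Gaussian with mean zero and a covariance depending only on $\mc L$, $\mc S$ and $\chi$. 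Since any two stationary solutions share these data, their finite-dimensional distributions, and hence their laws on path space, coincide.

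The step I expect to be the main obstacle is purely functional-analytic: the backward evolution $\phi_r = e^{(t-r)\mc L}g$ generally does not belong to $\bb S(\bb R)$ --- this is precisely the lack of invariance of $\bb S(\bb R)$ under $\mc L$ emphasized above --- so the martingale problem cannot be applied to it verbatim. I would circumvent this by working in Fourier variables, where $e^{(t-r)\mc L}$ is a bounded operator on $L^2(\bb R)$ and $r \mapsto \phi_r$ is $L^2$-differentiable with $\partial_r \phi_r = -\mc L \phi_r$; then approximate $\phi_r$ by $\psi_\varepsilon \phi_r \in \bb S(\bb R)$ exactly as in Lemma \ref{l1.4.2}, apply the martingale problem to these admissible test functions, and pass to the limit $\varepsilon \to 0$ using that a stationary $\mc Y_r$ extends to a continuous linear functional on $L^2(\bb R)$ and that $\mc L \phi_r$ is bounded and smooth by Proposition \ref{p1.4.1}. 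The integrability needed for the exponential martingale is automatic, since each $\mc Y_r(\phi_r)$ is Gaussian with variance $\chi\|\phi_r\|^2 \le \chi \|g\|^2$. Once this approximation bookkeeping is in place, the probabilistic core --- Gaussianity from the deterministic quadratic variation, and the covariance read off from the semigroup --- is routine.
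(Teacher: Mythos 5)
Your proposal is correct and follows essentially the same route as the paper's proof in Appendix \ref{sec uniq ou}: evolve the test function backward along the semigroup $P_{t-s}=e^{(t-s)\mc L}$ so the drift term vanishes, handle the fact that $P_{t-s}f \notin \bb S(\bb R)$ by $L^2$-approximation with genuine test functions, deduce Gaussianity from the deterministic quadratic variation of the resulting continuous martingale, and identify the covariance as $\chi \< f, e^{(t-s)\mc L} g\>$, which pins down the law. Your explicit exponential-martingale iteration over partitions merely spells out the joint Gaussianity of finite-dimensional distributions that the paper asserts more briefly.
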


The proof of this proposition is standard, and for completeness we have included it in Appendix \ref{sec uniq ou}.

\subsection{The fractional Burgers equation}
\label{s1.6}
In this section we define what we understand by an {\em energy solution} of the fractional Burgers equation, which was introduced in \cite{GubJar} in the case of the circle as spatial state, which  is given by  
\begin{equation}
\label{ec1.5.3}
d \mc Y_t = (\mc L^\rho)^* \mc Y_t dt + m\nabla \mc Y_t^2 dt + \sqrt{2\rho(1-\rho) (-\mc L^{1/2})} d \mc W_t,
\end{equation}
where $\{\mc W_t\,;\, t \in [0,T]\}$ is an $\mathbb S'(\bb R)$-valued Brownian motion and $\mc L^\rho$ is given in \eqref{operatorLrho}.
 For that purpose,  we need to introduce various definitions.

\begin{definition}
We say that an $\mathbb S'(\bb R)$-valued stochastic process $\{\mc Y_t\,;\, t \in [0,T]\}$ is {\em Uniformly Stochastically Continuous} in $L^2(\bb R)$ (USC) if there exists a finite constant $K_0$ such that
\begin{equation}
\label{USC}
E[\mc Y_t(f)^2] \leq K_0 \|f\|^2
\end{equation}
for any $f \in \mathbb S(\bb R)$ and any $t \in [0,T]$.
\end{definition}

The USC property is satisfied by a stationary process, and the stationary case is the only one that will be considered in these notes. Note that USC is a {\em static} property, in the sense that involves only one time instant. Observe also that the USC property allows to apply Lemma \ref{l1.4.2} and therefore for any process satisfying USC, the integral
\[
\int_0^t \mc Y_s\big(\mc L^\rho f_s\big) ds
\]
is well defined for any differentiable trajectory $f: [0,T] \to \mathbb S(\bb R)$.

Now we will describe a property that involves the time evolution of the process $\{\mc Y_t\,;\, t \in [0,T]\}$. Let $\{\iota_\varepsilon\,;\, \varepsilon \in (0,1)\}$ be an approximation of the identity. An example is
\[
\iota_\varepsilon(x) = \tfrac{1}{\sqrt{2 \pi \varepsilon^2}} e^{-x^2/2\varepsilon^2},
\]
or in general $\iota_\varepsilon(x)  = \frac{1}{\varepsilon} h(\frac{x}{\varepsilon})$, where $h \in \mathbb S(\bb R)$ is positive and $\int_{\bb R} h(x) dx=1$. If the process $\{\mc Y_t; t \in [0,T]\}$ is USC, the function $h$ can even be in $L^2(\bb R) \cap L^1(\bb R)$ instead of $\mathbb S(\bb R)$.

\begin{definition}\label{def EE}
Let $\{\mc Y_t\,;\, t \in [0,T]\}$ be a given process and let us define for $\varepsilon \in (0,1)$, $s < t \in [0,T]$ and $f \in \mathbb S(\bb R)$,
\begin{equation}\label{Amacfield}
\mc A_{s,t}^\varepsilon (f) = \int_s^t \int\limits_{\bb R} \mc Y_{s'}*\iota_\varepsilon(x)^2 f'(x) dx ds'.
\end{equation}
We say that $\{\mc Y_t; t \in [0,T]\}$ satisfies an  {\em Energy Estimate} (EE) if there exist $\kappa_0>0$, $\beta \in (0,1)$ such that
\begin{equation}
\label{EE}
E\big[\big( \mc A_{s,t}^\varepsilon(f)-\mc A_{s,t}^\delta(f)\big)^2\big] \leq \kappa_0 \varepsilon (t-s)^\beta \|f'\|^2
\end{equation}
for any $f \in \mathbb S(\bb R)$, any $0<\delta < \varepsilon <1$ and any $0 \leq s < t \leq T$.
\end{definition}

Note that the energy estimate implies the existence of the limit
\begin{equation}
\label{ec1.6.1}
\mc A_{s,t}(f) = \lim_{\varepsilon \to 0} \mc A_{s,t}^\varepsilon(f)
\end{equation}
in $L^2(P)$ for any $0 \leq s<t\leq T$ and any $f \in \mathbb S(\bb R)$. This process can be understood as an integrated version of $ -\nabla \mc Y_t^2$:
\[
\mc  A_{s,t} (f) = -\int_s^t \big(\nabla \mc Y_{s'}^2\big) (f) ds.
\]

Actually, we can say more about this limit process.

\begin{proposition}
\label{p1.6.1}
Let $\{\mc Y_t; t \in [0,T]\}$ be a process satisfying \eqref{USC} and \eqref{EE} (that is, $\{\mc Y_t\,;\, t \in [0,T]\}$ is a USC process satisfying an Energy Estimate). Let $\{\mc A_{s,t}(f)\,;\, s<t \in [0,T]\}$ be the random variables obtained in \eqref{ec1.6.1}. Then, there exists an $\mathbb S'(\bb R)$-valued process $\{\mc A_t\,;\, t \in [0,T]\}$ with continuous trajectories such that\\

\begin{itemize}
\item[i)] There exists a finite constant $C$ such that for any $0\leq s< t \leq T$ and any $f \in \bb S(\bb R)$,

\begin{equation}
\label{Holder}
E\big[\big( \mc A_t(f) -\mc A_s(f) \big)^2\big] \leq C |t-s|^{1+\beta/2} \|f'\|^2
\end{equation}

and in particular
$\{\mc A_t;  t\in [0,T]\}$ is a.s.$\gamma$-H\"older continuous for any $\gamma < \frac{\beta}{4}$,\\

\item[ii)] $\mc A_{s,t}(f) = \mc A_t(f) - \mc A_s(f)$ a.s. for any $f \in \mathbb S(\bb R)$ and any $0\leq s < t \leq T$.
\end{itemize}
\end{proposition}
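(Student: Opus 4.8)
The plan is to define $\mc A_t(f) := \mc A_{0,t}(f)$ using the limiting random variables from \eqref{ec1.6.1}, and then to establish both properties by a single two-sided $L^2$ estimate obtained by optimizing the regularization scale $\varepsilon$ against the time increment $t-s$. First I would make the convergence in \eqref{ec1.6.1} quantitative. Fixing $\varepsilon\in(0,1)$ and applying \eqref{EE} along the dyadic sequence $\varepsilon_n=\varepsilon 2^{-n}$, the triangle inequality in $L^2(P)$ gives
\[
\big\| \mc A_{s,t}(f) - \mc A_{s,t}^\varepsilon(f) \big\|_{L^2(P)} \leq \sum_{n\geq 0} \big(\kappa_0 \varepsilon_n (t-s)^\beta\big)^{1/2}\|f'\| = \frac{(\kappa_0 (t-s)^\beta)^{1/2}}{1-2^{-1/2}}\,\varepsilon^{1/2}\|f'\|,
\]
so that $E\big[(\mc A_{s,t}(f)-\mc A_{s,t}^\varepsilon(f))^2\big] \leq C_1 \varepsilon (t-s)^\beta \|f'\|^2$. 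The additivity $\mc A_{s,t}^\varepsilon(f)=\mc A_{s,u}^\varepsilon(f)+\mc A_{u,t}^\varepsilon(f)$ for $s<u<t$ is immediate from the time integral in \eqref{Amacfield}; passing to the $L^2(P)$ limit yields $\mc A_{s,t}(f)=\mc A_{s,u}(f)+\mc A_{u,t}(f)$ a.s., and with $\mc A_t(f):=\mc A_{0,t}(f)$ this gives the increment identity in ii), for the moment for fixed $f$, $s$, $t$.

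Next I would prove the fixed-scale bound $E\big[(\mc A_{s,t}^\varepsilon(f))^2\big] \leq C_2 \varepsilon^{-1}(t-s)^2\|f'\|^2$. By Cauchy--Schwarz in the time integral this reduces to the uniform-in-$s'$ estimate $E\big[(\int (\mc Y_{s'}*\iota_\varepsilon)^2 f')^2\big] \leq C_2 \varepsilon^{-1}\|f'\|^2$. Here I would use that in the stationary regime $\mc Y_{s'}$ is a white noise of variance $\chi$, so $u:=\mc Y_{s'}*\iota_\varepsilon$ is a centered stationary Gaussian field with covariance $r_\varepsilon(x-y)=\chi(\iota_\varepsilon \star \iota_\varepsilon)(x-y)$; the Isserlis/Wick formula gives $E[u(x)^2u(y)^2]=r_\varepsilon(0)^2+2\,r_\varepsilon(x-y)^2$, the disconnected term drops because $\int f'=0$, and
\[
\Big|\iint r_\varepsilon(x-y)^2 f'(x)f'(y)\,dx\,dy\Big| \leq \big\|\widehat{r_\varepsilon^2}\big\|_\infty \|f'\|^2 \leq \|r_\varepsilon\|^2 \|f'\|^2 \leq \chi^2 \|h\|^2 \varepsilon^{-1}\|f'\|^2,
\]
the last step by Young's inequality $\|\iota_\varepsilon\star\iota_\varepsilon\| \leq \|\iota_\varepsilon\|\,\|\iota_\varepsilon\|_{L^1}=\varepsilon^{-1/2}\|h\|$.

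Combining this fixed-scale bound with the remainder bound from the first step and choosing $\varepsilon=(t-s)^{1-\beta/2}$ to balance the two contributions $C_2\varepsilon^{-1}(t-s)^2$ and $C_1\varepsilon(t-s)^\beta$ yields exactly
\[
E\big[(\mc A_t(f)-\mc A_s(f))^2\big]=E\big[(\mc A_{s,t}(f))^2\big]\leq C\,(t-s)^{1+\beta/2}\|f'\|^2,
\]
which is \eqref{Holder}. (For $t-s$ of order one this optimal $\varepsilon$ may exceed $1$, but one only needs the estimate for small increments, which suffices for the continuity argument since $T<\infty$.)

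Finally, for the pathwise regularity in i) I would invoke Kolmogorov--Chentsov: with $p=2$ and exponent $1+\beta/2$, for each fixed $f$ the real-valued process $t\mapsto \mc A_t(f)$ admits a modification that is a.s. $\gamma$-H\"older continuous for every $\gamma<\beta/4$. To assemble these into a single $\mathbb S'(\bb R)$-valued process with continuous trajectories I would use that $f\mapsto\mc A_t(f)$ is linear together with the fact that \eqref{Holder} is uniform in $f$ through $\|f'\|$: fixing a countable dense subset of $\mathbb S(\bb R)$, defining $\mc A_t$ there via the H\"older-continuous modifications, and extending by continuity using \eqref{Holder} and linearity, a Mitoma-type argument produces the continuous $\mathbb S'(\bb R)$-valued modification $\{\mc A_t\}$ and upgrades ii) to hold simultaneously for all $f$. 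The main obstacle is the fixed-scale fourth-moment estimate $E[(\mc A_{s,t}^\varepsilon(f))^2]\leq C_2\varepsilon^{-1}(t-s)^2\|f'\|^2$: it is the only place where one must go beyond the second-moment content of \eqref{USC} and exploit the Gaussian structure of the stationary marginals, and obtaining its $\varepsilon^{-1}$ scaling sharply is precisely what forces the optimal choice $\varepsilon\sim(t-s)^{1-\beta/2}$ and hence the exponent $1+\beta/2$, rather than the crude $1$ or $1+\beta$ one would read off from a non-optimal scale.
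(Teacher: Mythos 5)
Your proof is correct and follows essentially the same route as the paper's: Proposition \ref{p1.6.1} is proved there by deferring to Theorem 2.2 of \cite{GonJar1} (the case $\beta=1$, extended to $\beta\in(0,1)$), whose argument is precisely your quantified limit from \eqref{EE}, the Wick-formula fixed-scale bound $E[(\mc A^\varepsilon_{s,t}(f))^2]\leq C\varepsilon^{-1}(t-s)^2\|f'\|^2$ exploiting white-noise marginals, the optimization $\varepsilon\sim(t-s)^{1-\beta/2}$ yielding the exponent $1+\beta/2$, and Kolmogorov--Centsov together with a Mitoma-type assembly into a continuous $\mathbb S'(\bb R)$-valued process. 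Your appeal to Gaussian stationary marginals, though not literally contained in \eqref{USC}--\eqref{EE}, matches the paper's own usage, since the proposition is only ever invoked for stationary fields whose fixed-time marginals are white noise, exactly as in \cite{GonJar1}.
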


This proposition corresponds to Theorem 2.2 of \cite{GonJar1} for the case $\beta =1$. The proof extends easily to the case $\beta \in (0,1)$.

Finally we can define what we understand by an {\em energy solution} of the fractional Burgers equation.

\begin{definition}
We say that an $\mathbb S'(\bb R)$-valued process $\{\mc Y_t\,;\,  t \in [0,T]\}$ is an energy solution of the fractional Burgers equation
given in \eqref{ec1.5.3} if:\\
\begin{itemize}
\item[a)] the process $\{\mc Y_t\,;\, t \in [0,T]\}$ is USC and satisfies an Energy Estimate,\\

\item[b)] for any differentiable trajectory $f: [0,T] \to \bb S(\bb R)$, the process
\[
\mc M_t(f) = \mc Y_t(f) - \mc Y_0(f) - \int_0^t \mc Y_s\big( (\partial_s+\mc L^\rho) f\big) ds +m \mc A_t(f)
\]
\[
2\rho(1-\rho) \int_0^t \<f_s, -\mc L^{1/2} f_s\> ds.
\]
\end{itemize}
\end{definition}

If the process $\{\mc Y_t\,;\, t \in [0,T]\}$ is, in addition, stationary we say that $\{\mc Y_t\,;\, t \in [0,T]\}$ is a {\em stationary energy solution} of the fractional Burgers equation.
This notion of solution was proposed in \cite{GonJar1} in the context of the usual KPZ equation (that is, with $\mc L^\rho$ replaced by $\Delta$). In \cite{GubJar} it was shown the existence of energy solutions for $\mc L = -(-\Delta)^{\alpha/2}$ if $\alpha>1$ and their uniqueness if $\alpha > 10/4$.\footnote{
The theory of regularity structures of \cite{Hai1, Hai2} provides a uniqueness criterion for the stochastic Burgers equation (the case $\alpha =2$) and in principle this criterion could be extended to $\alpha$ strictly larger than $3/2$, at least in finite volume. The case $\alpha = 3/2$, which is the relevant one for these notes, seems to be out of the  reach of the current state of this  theory.
}

\subsection{The density fluctuation field}

Let $p(\cdot)$ be given by \eqref{ec1} and let $\rho \in (0,1)$. 
The density $\rho$ and the transition rate $p(\cdot)$ will be fixed from now on and up to the end of these notes. Let $\{\eta_t\,;\, t \geq 0\}$ be an exclusion process with jump rate $p(\cdot)$ and initial distribution $\mu_\rho$. Since $\mu_\rho$ is invariant, $\eta_t$ has distribution $\mu_\rho$ for any $t \geq 0$ and in particular $ E_{\mu_\rho}[\eta_t(x)]=\rho$ for any $t \geq 0$ and any $x \in \bb Z$. Let $n \in \bb N$ be a scaling parameter. We define $\eta_t^n = \eta_{t n^\alpha}$ for $t \in [0,T]$ and $n \in \bb N$. We call $\{\eta_t^n; t \in [0,T]\}$ the {\em rescaled process}. We will use the notation
\begin{equation}\label{bar_eta}
\bar{\eta}_t^n(x) = \eta_t^n(x)-\rho.
\end{equation}
We denote by $\bb P_n$ the distribution on $\mc D([0,T]\,;\, \Omega)$ of $\{\eta_t^n\,;\, t \in [0,T]\}$ starting from $\mu_\rho$ and we denote by $\bb E_n$ the expectation with respect to $\bb P_n$. The {\em density fluctuation field} is defined as the $\mathbb S'(\bb R)$-valued process $\{\mc Y_t^n\,;\, t \in [0,T]\}$ given by
\begin{equation}
\label{ec1.5.1}
\mc Y_t^n(f)  = \tfrac{1}{\sqrt n} \sum_{x\in\bb Z} \bar \eta_t^n(x) f\big(\tfrac{x-(1-2\rho) m_n^\alpha t}{n}\big)
\end{equation}
for any $t \in [0,T]$, any $n \in \bb N$ and any $f \in \mathbb S(\bb R)$. Note the Galilean transformation embedded into this definition. Recall the definition of the constant $m_n^\alpha$ given in \eqref{mnalpha}. The factor $(1-2\rho)m_n^\alpha$ is the characteristic velocity of the process $\{\eta_t^n\,;\, t \in [0,T]\}$. For this reason we say that we observe the fluctuations on {\em Lagrangian coordinates}.

The main objective of these notes is to identify the limit, as $n \to \infty$, of the fluctuation field $\{\mc Y_t^n\,; \,t \in [0,T]\}_{n\in\bb N}$. We have restricted ourselves to a finite size time window in order to avoid uninteresting topological considerations. Note that the process $\{\mc Y_t^n\,; \,t \in [0,T]\}$ has trajectories in $\mc D([0,T]\,; \,\mathbb S'(\bb R))$.
Note as well that for any $f \in \mathbb S(\bb R)$ the real-valued random variable $\{\mc Y_t^n(f)\}_{n\in\bb N}$ converges in distribution, as $n\to\infty$, to a Gaussian distribution of mean zero and variance $\rho(1-\rho) \|f\|^2$. In other words, for any $t \in [0,T]$ the sequence $\{\mc Y_t^n\}_{n \in \bb N}$ converges in distribution, as $n\to\infty$, to a white noise of variance $\rho(1-\rho)$. Note that $\mc Y_t^n$ can be understood as a random signed measure. However, the white noise can not be constructed as a random measure, which makes it more appropriate to think about $\mc Y_t^n$ as a random distribution.

\subsubsection{The case $\alpha <3/2$: \textbf{the Ornstein-Uhlenbeck equation.}}

Let $c^+$, $c^-$ be the constants associated to the transition rate $p(\cdot)$ and let $\mc L^\rho$ be the operator given by
\begin{equation}\label{operatorLrho}
\mc L^\rho f(x)  = \int\limits_{\bb R} \frac{c_\rho(y)}{|y|^{1+\alpha}} \big(f(x+y)-f(x) - \theta^\alpha(y) f'(x)\big)dy,
\end{equation}
where
\[
c_\rho(x) =
\begin{cases}
c^+ (1-\rho)+ c^- \rho; & x \geq 0\\
c^+ \rho + c^- (1-\rho); & x <0.
\end{cases}
\]
In other words, $$\mc L^\rho = \mc L(c^+ (1-\rho)+ c^- \rho,c^+ \rho + c^- (1-\rho);\alpha).$$ Note that $(\mc L^\rho)^* = \mc L^{1-\rho}$ and that the symmetric part of $\mc L^\rho$ is equal to $\mc L^{1/2}$. In particular, the symmetric part of $\mc L^{\rho}$ does not depend on $\rho$.

Now we have at our disposal all the definitions needed to state the first main result of these notes.

\begin{theorem}
\label{t1.5.1}
Let $p(\cdot)$ be as in \eqref{ec1}. Assume that $\alpha <3/2$ and that $\eta_0^n$ has distribution $\mu_\rho$. Then, the sequence of processes $\{\mc Y_t^n; t \in [0,T]\}_{n \in \bb N}$ converges in distribution, as $n\to\infty$, with respect to the $J_1$-Skorohod topology of $\mc D([0,T]; \bb S'(\bb R))$ to the stationary solution of the infinite-dimensional Ornstein-Uhlenbeck process given by
\begin{equation}
\label{ec1.5.2}
d \mc Y_t = (\mc L^\rho)^* \mc Y_t dt + \sqrt{2\rho(1-\rho)(-\mc L^{1/2})} d\mc W_t,
\end{equation}
where $\{\mc W_t; t \in [0,T]\}$ is an $\bb S'(\bb R)$-valued Brownian motion.
\end{theorem}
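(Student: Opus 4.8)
The plan is to establish convergence in distribution by the standard two-part strategy: (i) prove tightness of the family $\{\mc Y_t^n; t \in [0,T]\}_{n \in \bb N}$ in $\mc D([0,T]; \bb S'(\bb R))$ with respect to the $J_1$-Skorohod topology, and (ii) characterize every limit point as a solution of the martingale problem associated to \eqref{ec1.5.2}, after which Proposition \ref{p1.4.3} forces all limit points to coincide in distribution with the unique stationary solution. The starting point is Dynkin's formula applied to the rescaled process: for a differentiable trajectory $f \colon [0,T] \to \bb S(\bb R)$, the quantity
\[
\mc M_t^n(f) = \mc Y_t^n(f_t) - \mc Y_0^n(f_0) - \int_0^t \big( \partial_s + n^\alpha L \big) \mc Y_s^n(f_s)\, ds
\]
is a martingale, whose predictable quadratic variation can be computed explicitly from the generator $L$ and the exclusion jump rates. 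Splitting the action of $n^\alpha L$ on the discrete field into a symmetric part, which produces the operator $\mc L_n$ acting on the test function, and an antisymmetric part, which produces the drift, one obtains a decomposition
\[
\mc Y_t^n(f_t) = \mc Y_0^n(f_0) + \int_0^t \mc Y_s^n\big( (\partial_s + \mc L_n^\rho) f_s\big)\, ds + A_t^n(f) + \mc M_t^n(f),
\]
where $A_t^n(f)$ is the drift term arising from the quadratic (current) contribution. By Proposition \ref{p2}, $\mc L_n^\rho f_s \to \mc L^\rho f_s$ in the appropriate sense, so the linear integral term converges to the generator term appearing in the martingale problem.

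The tightness of the initial field, of the martingale term, and of the linear integral term holds for all $\alpha \in (0,2)$ and is treated by the estimates established in the sections referenced in the introduction (Sections \ref{sec:tight_ini_field}, \ref{sec:tight_mart}, \ref{sec:tight_integral}); the martingale quadratic variation converges to $2\rho(1-\rho)\int_0^t \mc E(f_s)\, ds = 2\rho(1-\rho)\int_0^t \< f_s, -\mc L^{1/2} f_s\>\, ds$ by Proposition \ref{p3}, and stationarity under $\mu_\rho$ together with the computation in \eqref{ec1.5.1} shows that each fixed-time marginal $\mc Y_t^n$ converges to a white noise of variance $\rho(1-\rho)$, so any limit point is stationary and hence satisfies the USC property. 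The decisive point is therefore the drift term $A_t^n(f)$, and here the hypothesis $\alpha < 3/2$ is used crucially: I would invoke the multiscale analysis of Sections \ref{s3.1}--\ref{s3.3}, built on the Kipnis--Varadhan variance estimate distilled into Proposition \ref{p2.3.4}, to show that $A_t^n(f) \to 0$ in $L^2(\bb P_n)$, uniformly on compact time intervals. This is exactly the statement that for $\alpha \in [1 + \tfrac{2}{5+\sqrt{33}}, \tfrac{3}{2})$ the drift vanishes in the limit, so that no nonlinear term survives and \eqref{ec1.5.2} is purely linear.

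Once tightness is secured and the drift is shown to be negligible, I would pass to the limit along a convergent subsequence. Using the Boltzmann--Gibbs principle (first-order), the linear integral term converges to $\int_0^t \mc Y_s\big( (\partial_s + \mc L^\rho) f_s\big)\, ds$, interpreted in the sense of Lemma \ref{l1.4.2}; the martingale $\mc M_t^n(f)$ converges to a continuous martingale with the stated quadratic variation $2\rho(1-\rho)\int_0^t \< f_s, -\mc L^{1/2} f_s\>\, ds$, which one verifies by checking that the maximal jump of $\mc M_t^n(f)$ vanishes (so the limit is continuous, confirming convergence in $\mc C([0,T]; \bb S'(\bb R))$ and a fortiori in the $J_1$ topology) and applying the standard martingale central limit theorem. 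The limit point thus satisfies the martingale problem defining a stationary solution of \eqref{ec1.5.2}, and since $\mc L^\rho$ has symmetric part $\mc L^{1/2}$ and adjoint $\mc L^{1-\rho} = (\mc L^\rho)^*$, the characterization matches \eqref{ec1.4.1} with $\mc L$ replaced by $(\mc L^\rho)^*$ and $\chi = \rho(1-\rho)$. Proposition \ref{p1.4.3} then yields uniqueness of the limit, completing the proof.

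The main obstacle I anticipate is controlling the drift term $A_t^n(f)$ for $\alpha$ close to $3/2$. The difficulty is that the current associated to long jumps has unbounded support but its variance decays only polynomially with the jump length, so a single application of Proposition \ref{p2.3.4} is inadequate: one must carefully balance the size of the support against the intensity of each jump scale. The brute-force Cauchy--Schwarz bound disposes of jumps larger than $n^{(2\alpha-2)/(2\alpha-1)}$ (Lemma \ref{lem_1}), but the intermediate scales require the iterated, refined multiscale decomposition of Section \ref{s3.3}; getting the exponents in this iteration to close up to, but strictly below, $\alpha = 3/2$ is where the genuinely new technical input lies.
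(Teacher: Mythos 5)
Your proposal is correct and follows the paper's own proof essentially step for step: the same martingale decomposition \eqref{eq3.1}, the same tightness scheme via Mitoma's criterion and Propositions \ref{p2.2.2}--\ref{p2.2.3}, the same treatment of $A_t^n(f)$ through Lemma \ref{lem_1}, Proposition \ref{p2.3.4} and the multiscale analysis of Section \ref{s3.3} showing $A_t^n(f) \to 0$ in $L^2(\bb P_n)$ for $\alpha < 3/2$, and the same identification of limit points closed by the uniqueness statement of Proposition \ref{p1.4.3}. The only slight inaccuracy is attributing the convergence of the linear integral term $\int_0^t \mc Y_s^n(\mc L_n^\rho f_s)\, ds$ to a first-order Boltzmann--Gibbs principle; in the paper this follows directly from Proposition \ref{p2} together with the uniform-in-$n$ approximation of $\mc L^\rho f_s$ by $\psi_\varepsilon \mc L^\rho f_s$, but this mislabeling does not affect the validity of your argument.
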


\subsubsection{The case $\alpha=3/2$: \textbf{ the fractional Burgers equation}}
\label{s1.5.2}
The Galilean transformation used in \eqref{ec1.5.1} has as a consequence that in the limit equation \eqref{ec1.5.2}, in spite of the transition rate $p(\cdot)$ being asymmetric, there is no transport term. It turns out that when $\alpha$ is exactly equal to $3/2$, the second-order correction of the transport term of the dynamics, non-linear in nature, has the same strength that the linear part of the dynamics. In this case, the limiting process corresponds to  the {\em fractional Burgers equation} and we can state the result as follows.

\begin{theorem}
\label{t1.6.2}
Let $p(\cdot)$ be given by \eqref{ec1}. Let us assume that $\alpha = 3/2$ and that $\eta_0^n$ has distribution $\mu_\rho$. Then the sequence of processes $\{\mc Y_t^n; t \in [0,T]\}_{n \in \bb N}$ is tight with respect to the $J_1$-Skorohod topology of $\mc D([0,T]; \bb S' (\bb R))$ and any of its limit points is a stationary energy solution
of the fractional Burgers equation \eqref{ec1.5.3}.
\end{theorem}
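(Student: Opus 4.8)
The plan is to prove the two assertions of Theorem~\ref{t1.6.2} --- tightness and the identification of limit points as stationary energy solutions --- by setting up an approximate martingale decomposition of the fluctuation field $\{\mc Y_t^n\}$ and passing to the limit. First I would apply Dynkin's formula to the process $\mc Y_t^n(f)$ to obtain the decomposition
\[
\mc Y_t^n(f) = \mc Y_0^n(f) + \int_0^t \mc Y_s^n(\mc L_n f)\, ds + A_t^n(f) + \mc M_t^n(f),
\]
where $\mc M_t^n(f)$ is a martingale, $\int_0^t \mc Y_s^n(\mc L_n f)\,ds$ is the linear part coming from the symmetric/antisymmetric generator (converging to the $\mc L^\rho$-term by Proposition~\ref{p2}), and $A_t^n(f)$ is the nonlinear drift term arising from the quadratic part of the current. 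The whole difficulty of the theorem is concentrated in controlling $A_t^n(f)$, so the bulk of the argument is the chain of tightness estimates announced for Sections~\ref{s3.1}--\ref{s3.4}, culminating at $\alpha=3/2$.

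Next I would establish tightness of each term separately. The initial field $\mc Y_0^n(f)$ converges to white noise, and the martingale term $\mc M_t^n(f)$ is handled by computing its predictable quadratic variation and checking it converges to $2\rho(1-\rho)\int_0^t \<f_s,-\mc L^{1/2} f_s\>\,ds$, with the jump sizes shown to be asymptotically negligible so that any limit point has continuous trajectories. The crucial step is tightness of the drift $A_t^n(f)$ for $\alpha = 3/2$. Here I would invoke the multiscale machinery built up in the lower-$\alpha$ cases: use Lemma~\ref{lem_1} to discard jumps larger than a macroscopic scale by a brute-force Cauchy--Schwarz bound, then use Proposition~\ref{p2.3.4} (the effective Kipnis--Varadhan-type variance estimate, obtained via the spectral gap) together with the multiscale/renormalization scheme to reduce $A_t^n(f)$ successively. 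The multiscale analysis of Section~\ref{s3.3} gives the one-block estimate, replacing the drift by the square of the empirical density on a box of size $n^{1-\delta}$; the renormalization scheme of \cite{GonJar1} then yields the two-blocks estimate, pushing the box size down to $\varepsilon n$. The outcome I am aiming for is a uniform $L^2$-bound of the form \eqref{ec3.6.1} on the difference between $A_t^n(f)$ and the regularized quadratic functional, which is exactly what feeds the energy estimate \eqref{EE}.

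With these uniform bounds in hand, tightness of $\{\mc Y_t^n\}$ in $\mc D([0,T];\bb S'(\bb R))$ follows from Mitoma's criterion applied termwise, using the Kolmogorov--Centsov criterion of Proposition~\ref{KCC} for the drift and martingale pieces. To identify limit points, I would fix a convergent subsequence with limit $\{\mc Y_t\}$ and verify the two conditions in the definition of stationary energy solution. Stationarity and the USC property~\eqref{USC} are inherited from the stationarity of $\mu_\rho$ and the fact that each $\mc Y_t^n$ converges to white noise of variance $\rho(1-\rho)$. The Energy Estimate~\eqref{EE} is precisely the limiting form of the uniform $L^2$-bound~\eqref{ec3.6.1}, which lets me invoke Proposition~\ref{p1.6.1} to construct the limiting additive functional $\mc A_t$ with the stated H\"older regularity; this is what makes $m\nabla\mc Y_t^2$ meaningful as $-m\mc A_t$. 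Finally, passing to the limit in the approximate martingale decomposition and using Proposition~\ref{p2} to send $\mc L_n \to \mc L^\rho$ shows that $\mc M_t(f)$ is a continuous martingale with the correct quadratic variation, establishing condition~(b).

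The main obstacle, as the introduction stresses, is the tightness of the drift at the critical value $\alpha=3/2$: here the nonlinearity and the linear part scale identically, so a single pass of Proposition~\ref{p2.3.4} is insufficient and the multiscale plus renormalization argument must be pushed to its limit. The delicate point is that the multiscale scheme of Section~\ref{s3.3} makes big jumps negligible but leaves very small jumps of order up to $n^{1-\delta}$ contributing, forcing the two-blocks estimate and a careful tracking of how the spectral-gap cost over a box of size $\ell$ trades off against the power-law decay of the current variance. Obtaining the clean uniform bound~\eqref{ec3.6.1} --- rather than merely enough to prove tightness --- is what simultaneously delivers the Energy Estimate needed for the energy-solution characterization, and this is the heart of the proof.
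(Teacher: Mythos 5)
Your proposal follows essentially the same route as the paper: the martingale decomposition \eqref{eq3.1}, the termwise tightness via Mitoma's criterion with the drift $A_t^n(f)$ handled by Lemma \ref{lem_1}, Proposition \ref{p2.3.4} and the multiscale one-block reduction to boxes of size $n^{1-\delta}$, followed by the two-blocks estimate \eqref{bound0} and the uniform bound \eqref{ec3.6.1}, which is precisely what the paper uses both for Kolmogorov--Centsov tightness and, in Section \ref{s3.6}, to verify the Energy Estimate and identify limit points as stationary energy solutions. The proposal is correct and matches the paper's proof in structure and in all its key steps.
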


\begin{remark}

\quad 
\begin{enumerate}

\item  A consequence of this theorem is the existence of energy solutions of \eqref{ec1.5.3}. The method used in \cite{GubJar} restrict ourselves to finite volume.

\item In a formal way, equation \eqref{ec1.5.3} is invariant under the KPZ scaling $1:2:3$.

\item The dependence of $\mc L^\rho$ on the density $\rho$ is a new feature, not observed before in the literature.
\end{enumerate}
\end{remark}

\section{Proof of Theorems \ref{t1.5.1} and \ref{t1.6.2}}
\label{s2}

Since the limit processes on Theorems \ref{t1.5.1} and \ref{t1.6.2} are characterized by martingale problems, it is natural to begin defining various martingales related to the processes $\{\mc Y_t^n\,;\, t \in [0,T]\}_{{n\in\mathbb{N}}}$. Dynkin's formula tells us that for well behaved functions $F: [0,T] \times \Omega \to \bb R$, the process
\[
F(t,\eta_t^n)  - F(0,\eta_0^n) -\int_0^t \big(\partial_s + n^\alpha L\big)F(s,\eta_s^n) ds
\]
is a martingale, whose  quadratic variation is given by
\[
n^\alpha \int_0^t \big\{ L F(s,\eta_s^n)^2 -2 F(s,\eta_s^n) L F(s,\eta_s^n) \big\} ds.
\]
We will use this formula for functions of the form
\[
F(t,\eta_t^n)= \mc Y_t^n(f) = \tfrac{1}{\sqrt n} \sum_{x \in \bb Z} \bar{\eta}_t^n(x)f\big(\tfrac{x-(1-2\rho)m_n^\alpha t}{n}\big),
\]
where $f: [0,T] \to \mathbb S(\bb R)$ is smooth and $\bar\eta^n_t(x)$ was defined in \eqref{bar_eta}.
For this choice, let us define the martingale $\{\mc M_t^n(f); t \in [0,T]\}$ as
\begin{equation}
\label{ec2.2.1}
\mc M_t^n(f) = \mc Y_t^n(f) - \mc Y_0^n(f) - \int_0^t (\partial_s+ n^\alpha L) \mc Y_s^n(f) ds,
\end{equation}
and whose quadratic variation $\< \mc M_t^n(f)\>$ is equal to
\[
 \int_0^t n^{\alpha-1} \sum_{x,y} p(y-x) \big( \eta_s^n(y)-\eta_s^n(x) \big)^2 \big( f\big(\tfrac{y-(1-2\rho)m_n^\alpha s}{n} \big) - f\big( \tfrac{x-(1-2\rho)m_n^\alpha s}{n}\big)\big)^2 ds.
\]
Now we compute the integral part of the martingale $\mc M_t^n(f)$.  A simple computation shows that
\begin{equation*}
L \eta_s(x)=\sum_{y\in\bb Z}\Big(p(x-y)\eta_s^n(y)(1-\eta_s^n(x))-p(y-x)\eta_s^n(x)(1-\eta_s^n(y))\Big),
\end{equation*}
therefore
\begin{equation}\label{martdecomp}
n^\alpha L \mathcal{Y}_s^n(f)=\frac{n^\alpha}{\sqrt n}\sum_{x,y\in\bb Z}\Big(f\Big(\tfrac{y-(1-2\rho)m_n^\alpha s}{n}\Big)-f\Big(\tfrac{x-(1-2\rho)m_n^\alpha s}{n}\Big)\Big)p( y-x)\eta_s^n(x)(1-\eta_s^n(y)).
\end{equation}
Note that
\[
\eta_s^n(x)(1-\eta_s^n(y))-\rho(1-\rho)=(1-\rho)\bar{\eta}_s^n(x)-\rho\bar{\eta}_s^n(y)-\bar{\eta}_s^n(x)\bar{\eta}_s^n(y),
\]
where $\bar\eta^n_s(x)$ was defined in \eqref{bar_eta}.    
Using this identity and grouping together the two terms involving $x$ and $y$ in \eqref{martdecomp}, we can rewrite the right hand side of \eqref{martdecomp} as
\begin{multline*}
\frac{n^\alpha}{\sqrt n}\sum_{x,y\in\bb Z}\Big(f\Big(\tfrac{y-(1-2\rho)m_n^\alpha s}{n}\Big)-f\Big(\tfrac{x-(1-2\rho)m_n^\alpha s}{n}\Big)\Big)\Big\{(1-\rho)p(y-x)+\rho p(x-y)\Big\}\bar{\eta}_s^n(x)\\
-\frac{n^\alpha}{\sqrt n}\sum_{x,y\in\bb Z}\Big(f\Big(\tfrac{y-(1-2\rho)m_n^\alpha s}{n}\Big)-f_t\Big(\tfrac{x-(1-2\rho)m_n^\alpha s}{n}\Big)\Big)p(y-x)\bar{\eta}_s^n(x)\bar{\eta}_s^n(y).
\end{multline*}

On the other hand, 
\begin{equation*}
\partial_s \mathcal{Y}_s^n(f)=-(1-2\rho)\frac{m_n^\alpha}{n\sqrt n}\sum_{x\in\bb Z}\partial_s f\Big(\tfrac{x-(1-2\rho)m_n^\alpha s}{n}\Big)\bar\eta^n_s(x).
\end{equation*}
Moreover, a simple computation shows that the right hand side of the previous expression equals to
\begin{equation}
-\frac{n^\alpha}{\sqrt n}\sum_{x,y\in\bb Z}((1-\rho)p(y-x)+\rho p(x-y))\theta^\alpha\Big(\frac{y-x}{n}\Big)\partial_s f\Big(\tfrac{x-(1-2\rho)m_n^\alpha s}{n}\Big)\bar\eta^n_s(x),
\end{equation}
where $\theta^\alpha(\cdot)$ was defined in \eqref{eq2}.
Putting together the previous computations, 
the integral part of the martingale can be written as
\begin{equation}\label{int_part_mart}
\begin{split}
&\int_0^t \frac{n^\alpha}{\sqrt n}\sum_{x,y\in\bb Z}\Big(f\Big(\tfrac{y-(1-2\rho)m_n^\alpha s}{n}\Big)
-f\Big(\tfrac{x-(1-2\rho)m_n^\alpha s}{n}\Big)
-\theta^\alpha\Big(\frac{y-x}{n}\Big)\partial_sf\Big(\tfrac{x-(1-2\rho)m_n^\alpha s}{n}\Big)\Big)
\\
& \quad \quad \quad \quad \quad \quad \quad \quad \quad \quad\times 
\Big\{(1-\rho)p(y-x)+\rho p(x-y)\Big\}\bar{\eta}_s^n(x)ds\\
-&\int_0^t \frac{n^\alpha}{\sqrt n}\sum_{x,y\in\bb Z}\Big(f\Big(\tfrac{y-(1-2\rho)m_n^\alpha s}{n}\Big)
-f\Big(\tfrac{x-(1-2\rho)m_n^\alpha s}{n}\Big)\Big)
p(y-x)\bar{\eta}_s^n(x)\bar{\eta}_s^n(y) ds.
\end{split}
\end{equation}
Now, for a smooth function $f: \bb R \to \bb R$, let  $\mc L_n^\rho $ be the operator defined by
\[
\begin{split}
\mc L_n^\rho f \big(\tfrac{x}{n}\big)&= n^\alpha \sum_{y\in\bb Z} \Big\{ ((1-\rho) p(y-x) + \rho p(x-y))  \Big( f\Big(\tfrac{y}{n}\Big)- f\big( \tfrac{x}{n}\big)-
	 \theta^\alpha \big( \tfrac{y-x}{n}\big) f' \big( \tfrac{x}{n} \big)\Big)\Big\}.\\
\end{split}
\]
Then, by  a change of variables
we get that the  first term at the right hand side of \eqref{int_part_mart} coincides with $$\int_0^t \mc Y_s^n(\mc L_n^\rho f)ds.$$
We also note that by a simple computation
we can rewrite  $\mc L_n^\rho f$ as
\begin{equation}\label{operator L n rho}
\begin{split}
\mc L_n^\rho f \big(\tfrac{x}{n}\big)
	&= n^\alpha \sum_{y\in\bb Z}  \Big\{ ((1-\rho) p(y-x) + \rho p(x-y))  \big( f\big(\tfrac{y}{n} \big) - f\big( \tfrac{x}{n}\big)\big) \Big\} -(1-2\rho) \frac{m_n^\alpha}{n} f'\big( \tfrac{x}{n}).
	\end{split}
	\end{equation}

Now we look at the second term at the right hand side of \eqref{int_part_mart}.  By the anti-symmetry of $p(\cdot)$, we can replace there $p(\cdot)$ by $a(\cdot)$.  For that purpose note that by exchanging $x$ with $y$ we have that
\begin{equation*}
\begin{split}
 &\frac{1}{2}\sum_{x,y} \Big(f\Big(\tfrac{y-(1-2\rho)m_n^\alpha s}{n}\Big)
-f\Big(\tfrac{x-(1-2\rho)m_n^\alpha s}{n}\Big)\Big)p(y-x) \bar{\eta}_s^n(y) \bar{\eta}_s^n(x) \\
 +&\frac{1}{2}\sum_{x,y}p(x-y) \bar{\eta}_s^n(y) \bar{\eta}_s^n(x) \Big(f\Big(\tfrac{x-(1-2\rho)m_n^\alpha s}{n}\Big)
-f\Big(\tfrac{y-(1-2\rho)m_n^\alpha s}{n}\Big)\Big)\\
 =&\sum_{x,y} a(x-y) \bar{\eta}_s^n(y) \bar{\eta}_s^n(x) \big( f\big(\tfrac{x-(1-2\rho)m_n^\alpha s}{n}\big) - f \big( \tfrac{y-(1-2\rho)m_n^\alpha s}{n}\big)\big).
 \end{split}
\end{equation*}
From the previous  computations, the martingale $\mc M_t^n(f)$ can be written in a more explicit way as
\begin{equation}
\label{eq3.1}
\mc M_t^n(f) = \mc Y_t^n(f) - \mc Y_0^n(f) -\int_0^t \mc Y_s^n(\mc L_n^\rho f)ds + A_t^n(f),
\end{equation} where $\mc L_n^\rho$ was defined in  \eqref{operator L n rho} and 
 the process $\{A_t^n(f)\,;\, t \geq 0\}$ is defined  as
 \begin{equation}\label{Afield}
A_t^n(f) = \int_0^t n^{\alpha -1/2} \sum_{x,y} a(y-x) \bar{\eta}_s^n(y) \bar{\eta}_s^n(x) \big( f\big(\tfrac{y-(1-2\rho)m_n^\alpha s}{n}\big) - f \big( \tfrac{x-(1-2\rho)m_n^\alpha s}{n}\big)\big)ds.
\end{equation}
Note that Proposition \ref{p2} also holds for the operators $\mc L_n^\rho$ and $\mc L^\rho$, therefore $\mc L_n^\rho$ converges to $\mc L^\rho$, as $n \to \infty$, in the sense described there. We will use indistinctly the symbol $f$ for a function $f \in \mathbb S(\bb R)$ and for the function from $[0,T]$ to $\mathbb S(\bb R)$ with constant value equal to $f$.\\
The proof of Theorems \ref{t1.5.1} and \ref{t1.6.2} follows the classical structure of convergence in distribution of stochastic processes.  The proofs of the theorems  are not very different among them, the main difference is that we will show that $A_t^n(f)$ converges to $0$, as $n\to\infty$, if $\alpha <3/2$ and that it is asymptotically equivalent to a functional of the fluctuation field $\mc Y_t^n$, if $\alpha =3/2$. \\
 We start by proving  tightness with respect to the corresponding topologies, then we prove that any limit point is a solution of the corresponding martingale problem. In the case of Theorem \ref{t1.5.1}, the uniqueness criterion of Proposition \ref{p1.4.3} allows to conclude the desired result. In the case of Theorem \ref{t1.6.2}, the lack of an uniqueness criterion as the one stated in Proposition \ref{p1.4.3} restrict ourselves to convergence through subsequences.

\section{Tightness}
\label{s3}
In order to prove tightness of  the sequence $\{\mc Y_t^n\,; \,t \in [0,T]\}_{n \in \bb N}$,  first we recall  the so-called {\em Mitoma's criterion}, which reduces the proof of tightness of distribution-valued processes to the proof of tightness for real-valued processes.

\begin{proposition}[Mitoma's criterion \cite{Mit}]
\label{p2.2.1}
A sequence $\{\mc Y_t^n; t \in [0,T]\}_{n \in \bb N}$ of stochastic processes with trajectories in $\mc D([0,T]; \mathbb S'(\bb R))$ is tight with respect to the $J_1$-Skorohod topology if, and only if, the sequence of real-valued processes $\{\mathbb Y_t^n(f); t \in [0,T]\}_{n \in \bb N}$ is tight with respect to the $J_1$-Skorohod topology of $\mc D([0,T]; \bb R)$ for any $f \in \mathbb S(\bb R)$.
\end{proposition}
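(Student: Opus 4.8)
The plan is to prove the two implications separately, the forward direction (tightness of the field implies tightness of the projections) being routine and the converse being the substance of the statement. For the forward direction, observe that for each fixed $f \in \mathbb S(\bb R)$ the evaluation map $\pi_f : \mathbb S'(\bb R) \to \bb R$, $\pi_f(\omega) = \omega(f)$, is continuous for the weak-$\star$ topology. It therefore induces a map $\Pi_f : \mc D([0,T]; \mathbb S'(\bb R)) \to \mc D([0,T]; \bb R)$ sending a trajectory $Y$ to $t \mapsto Y_t(f)$, and this map is continuous for the respective $J_1$-Skorohod topologies, since a single time-change nearly realizing the Skorohod distance of $Y$ simultaneously works for $\Pi_f(Y)$. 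As the projected sequence is $\Pi_f$ applied to $\{\mc Y^n\}$, the continuous mapping theorem for tight families yields tightness of $\{\mc Y^n_\cdot(f)\}$.

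For the converse I would exploit the nuclearity of $\mathbb S(\bb R)$. Write $\mathbb S(\bb R)$ as the projective limit of a decreasing family of Hilbert spaces $\mc H_p$, $p \in \bb N_0$, whose norms $\|\cdot\|_p$ generate the Schwartz topology and for which the inclusions $\mc H_{p+1} \hookrightarrow \mc H_p$ are Hilbert--Schmidt; concretely one may take $\|f\|_p^2 = \sum_j (2j+1)^{2p} \<f, h_j\>^2$, where $\{h_j\}$ are the Hermite functions. Dually $\mathbb S'(\bb R) = \bigcup_p \mc H_{-p}$, and the embeddings $\mc H_{-p} \hookrightarrow \mc H_{-q}$ for $q>p$ are Hilbert--Schmidt, so bounded subsets of $\mc H_{-p}$ are relatively compact in $\mc H_{-q}$. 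The first step is a static lemma: a family of laws on $\mathbb S'(\bb R)$ is tight precisely when every one-dimensional projection $\{\omega(h_j)\}$ is tight, because the Hilbert--Schmidt weights upgrade coordinatewise boundedness into compact containment in some $\mc H_{-q}$.

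The second step lifts this to the trajectory level. Applying the static lemma to the marginals $\mc Y^n_t$, which are tight coordinatewise by hypothesis together with the uniform second-moment bound available from stationarity, yields compact containment in $\mc D([0,T]; \mc H_{-q})$. It then remains to verify an Aldous-type modulus-of-continuity condition for $\{\mc Y^n\}$ in the $\mc H_{-q}$-norm. Here the hypothesis furnishes, for each coordinate $h_j$, the Aldous condition for the scalar process $\{\mc Y^n_\cdot(h_j)\}$; the point is to assemble these into a single condition for $\sum_j c_j \mc Y^n_\cdot(h_j)^2$ with the summable weights $c_j$ coming from the Hilbert--Schmidt constant of $\mc H_{-p} \hookrightarrow \mc H_{-q}$. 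Summability lets one dominate the oscillation of the $\mc H_{-q}$-valued trajectory by a convergent series of scalar oscillations, each controlled uniformly in $n$, whence tightness in $\mc D([0,T]; \mc H_{-q})$, which embeds continuously into $\mc D([0,T]; \mathbb S'(\bb R))$.

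The main obstacle is precisely this passage from coordinatewise $J_1$-tightness to joint $J_1$-tightness: in the $J_1$ topology one must select a single time-change that nearly matches all coordinates at once, whereas the hypothesis only controls each coordinate in isolation. Nuclearity resolves the difficulty, since the rapidly summable Hilbert--Schmidt weights render the tail coordinates uniformly negligible, so that a common modulus of continuity can be built from finitely many leading coordinates plus a controllable remainder. As this is exactly the content of Mitoma's theorem, one may alternatively invoke \cite{Mit} directly; the argument above records how the nuclear structure of $\mathbb S(\bb R)$ drives the equivalence.
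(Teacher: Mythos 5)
The paper offers no proof of this proposition: it is quoted as a black box from \cite{Mit}, so your sketch must stand on its own, and while your forward direction is fine and your overall strategy (nuclearity, Hermite scale $\mc H_{-p}$, static compact containment plus a modulus condition) is the right family of ideas, the converse as written has two genuine gaps. First, the static lemma is not justified by the reason you give. Coordinatewise tightness supplies radii $R_j$ with $\sup_n P(|\mc Y^n_t(h_j)|>R_j)\leq \varepsilon 2^{-j}$, but nothing forces the $R_j$ to grow polynomially in $j$, and the set $\{\omega: |\omega(h_j)|\leq R_j \text{ for all } j\}$ is contained in no ball of any $\mc H_{-q}$ once $R_j$ grows super-polynomially (take $R_j = e^j$ and $\omega^{(N)} = R_N h_N$, which lies in the set but has $\|\omega^{(N)}\|_{-q}\to\infty$). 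So the Hilbert--Schmidt weights alone do not upgrade coordinatewise bounds into compact containment; what rescues the statement is linearity of $f\mapsto \mc Y^n_t(f)$, through an equicontinuity/Baire-category argument on the Fr\'echet space $\bb S(\bb R)$ yielding a single continuous Hilbertian seminorm dominating all the laws at once (this uniform Minlos--Sazonov step is the heart of Mitoma's static lemma). Your patch via the stationary second-moment bound works in the paper's application but imports a hypothesis that is absent from the proposition, which is stated as an unconditional equivalence.

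Second, and more seriously, the dynamic assembly step would fail as described. Even in two dimensions, $J_1$-tightness of each scalar coordinate does not imply $J_1$-tightness of the pair: take paths whose two coordinates each have a single unit jump, at distinct times $s_n \neq t_n$ with $t_n - s_n \to 0$; each coordinate is relatively compact in $\mc D([0,T];\bb R)$, but no single time-change can match both jumps, and the sum coordinate, having two coalescing jumps, is not $J_1$-tight. This shows two things at once: the Skorohod modulus of an $\mc H_{-q}$-valued path is \emph{not} dominated by a weighted series of the coordinates' Skorohod moduli (the infimum over partitions does not distribute across coordinates), so ``summability lets one dominate the oscillation'' is false in $J_1$; and the version of the theorem whose hypothesis is tightness only along the basis $\{h_j\}$ is actually \emph{false}, since the example above can be embedded in two Hermite coordinates. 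Any correct proof must therefore use the hypothesis for all $f\in \bb S(\bb R)$ --- in particular for finite linear combinations such as $h_i+h_j$, which is exactly what excludes the two-jump pathology --- whereas your argument only invokes the individual coordinates with weights; moreover your coordinatewise Aldous controls are uniform in $n$ but carry no uniformity in $j$, which the tail estimate needs. These are precisely the points Mitoma's proof handles; as you note at the end, one may invoke \cite{Mit} directly, which is what the paper does and is the sound course here too.
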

From the previous proposition, in order to prove tightness of the sequence $\{\mc Y_t^n\,; \,t \in [0,T]\}_{n \in \bb N}$
it is enough to show tightness of the sequence of real-valued processes $\{\mc Y_t^n(f) \,;  \,t \in [0,T]\}_{n \in  \bb N}$ for any $f \in \bb S(\bb R)$. According to \eqref{eq3.1}, it is enough to show that the processes
\begin{equation*}
\begin{split}
&\{\mc Y_0^n(f)\}_{n \in \bb N},  \quad \Big\{\int_0^t \mc Y_s^n(\mc L_n^\rho f) ds; t \in [0,T]\Big\}_{n \in \bb N}, \\& \{A_t^n(f); t \in [0,T]\}_{n \in \bb N},  \quad \textrm{and} \quad  \{\mc M_t^n(f); t \in [0,T]\}_{n \in \bb N}
\end{split}
\end{equation*}
are tight.
\subsection{Tightness of the initial field}
\label{sec:tight_ini_field}
We prove, in fact, that  the sequence $\{\mc Y_0^n(f)\}_{n\in \bb N}$ converges as $n\to\infty$. Computing the characteristic function of $\mc Y_0^n(f)$, we can check that it converges in distribution, as $n\to\infty$,  to a Gaussian law of mean $0$ and variance $\rho(1-\rho) \|f\|^2$. In particular, $\{\mc Y_0^n(f)\}_{n \in \bb N}$ is tight.

\subsection{Tightness of the martingales}
\label{sec:tight_mart}
In order to prove tightness of the martingales $\{\mc M_t^n(f); t \in [0,T]\}_{n \in \bb N}$ we will in fact prove that they converge. For that, we will use the following {\em convergence criterion} (see Theorem 2.1 of \cite{Whi}):

\begin{proposition}
\label{p2.2.2}
A sequence $\{M_t^n\,;\, t \in [0,T]\}_{n\in \bb N}$ of square-integrable martingales converges in distribution, with respect to the $J_1$-Skorohod topology of $\mc D([0,T]; \bb R)$, as $n\to\infty$, to a Brownian motion of variance $\sigma^2$ if:
\begin{itemize}
\item[i)] {\em Asymptotically negligible jumps:} for any $\varepsilon >0$,
\[
\lim_{n \to \infty} P \Big( \sup_{0 \leq t \leq T} \big| M_{t-}^n-M_t^n\big| > \varepsilon\big) =0,
\]
\item[ii)] {\em Convergence of quadratic variations:} for any $t \in [0,T]$,
\[
\lim_{n \to \infty} \< M_t^n\> = \sigma^2 t,
\]
in distribution.
\end{itemize}
\end{proposition}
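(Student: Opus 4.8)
The plan is to follow the classical route for martingale functional central limit theorems: establish tightness of $\{M_t^n\}$ in $\mc D([0,T];\bb R)$, characterize every limit point as a continuous martingale with deterministic quadratic variation $\sigma^2 t$, and then invoke L\'evy's characterization theorem to identify such a process as a Brownian motion of variance $\sigma^2$. Since the limit is thereby uniquely determined, tightness together with the identification of limit points upgrades subsequential convergence to convergence in distribution of the whole sequence.

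For tightness I would rely on Aldous's criterion. The key preliminary observation is that hypothesis (ii), convergence in distribution of $\langle M_t^n\rangle$ to the \emph{constant} $\sigma^2 t$, is equivalent to convergence in probability for each fixed $t$. Because $t \mapsto \langle M_t^n\rangle$ is nondecreasing while the limit $t \mapsto \sigma^2 t$ is continuous, a Dini-type argument promotes this pointwise-in-$t$ convergence to the uniform statement $\sup_{0 \le t \le T} |\langle M_t^n\rangle - \sigma^2 t| \to 0$ in probability. Granting this, for any stopping time $\tau_n \le T$ and any $\delta > 0$ the optional stopping theorem gives
\[
E\big[(M_{(\tau_n+\delta)\wedge T}^n - M_{\tau_n}^n)^2\big] = E\big[\langle M^n\rangle_{(\tau_n+\delta)\wedge T} - \langle M^n\rangle_{\tau_n}\big],
\]
and the uniform convergence of the quadratic variation bounds the right-hand side by $\sigma^2 \delta + o(1)$. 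This controls the increments over small random intervals, which is exactly Aldous's condition; combined with tightness of the one-dimensional marginals (again from (ii)), it yields tightness of $\{M_t^n\}$ in $\mc D([0,T];\bb R)$.

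Next I would identify the limit points. Let $M$ be any subsequential limit. Hypothesis (i), the asymptotic negligibility of jumps, forces the maximal jump of $M^n$ to vanish in probability, and since the max-jump functional is well behaved under $J_1$-convergence this guarantees that $M$ has almost surely continuous trajectories. That $M$ is a martingale follows by passing to the limit in the martingale relation for $M^n$, the uniform integrability needed to interchange limit and conditional expectation being supplied by the $L^2$ control implicit in (ii). To compute the bracket I would start from the fact that $(M_t^n)^2 - \langle M^n\rangle_t$ is a martingale; passing to the limit and using $\langle M_t^n\rangle \to \sigma^2 t$ shows that $M_t^2 - \sigma^2 t$ is a martingale, so $\langle M\rangle_t = \sigma^2 t$. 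A continuous martingale with $M_0 = 0$ and deterministic bracket $\sigma^2 t$ is a Brownian motion of variance $\sigma^2$ by L\'evy's theorem, which closes the argument.

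The step I expect to be the main obstacle is the tightness argument, and within it the passage from the fixed-time convergence provided by (ii) to the uniform control over random time intervals demanded by Aldous's criterion. This is precisely where the monotonicity of $\langle M^n\rangle$ and the continuity of the deterministic limit $\sigma^2 t$ are essential, through the Dini-type upgrade to uniform convergence; without these structural features the bare hypothesis (ii) would not suffice.
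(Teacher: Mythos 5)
The paper does not actually prove this proposition: it is quoted from the literature (Theorem 2.1 of Whitt's survey \cite{Whi}, going back to Ethier--Kurtz-type martingale FCLTs), so there is no in-paper proof to compare against. Your argument is precisely the classical route underlying that citation: Aldous tightness from control of the predictable bracket, continuity of limit points from condition (i) via the $J_1$-continuity of the maximum-jump functional, identification of the bracket of the limit, and L\'evy's characterization. The Dini-type upgrade of (ii) from fixed times to uniform convergence in probability, exploiting the monotonicity of $t \mapsto \langle M^n \rangle_t$ and the continuity of $t\mapsto \sigma^2 t$, is exactly the right structural observation. (Note that the statement implicitly assumes $M_0^n = 0$, as holds in the paper's application; you use this silently in the L\'evy step.)

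Two steps are asserted with more optimism than the hypotheses warrant, though both are repairable by standard tools. First, in the Aldous step you bound $E[(M^n_{(\tau_n+\delta)\wedge T} - M^n_{\tau_n})^2] = E[\langle M^n\rangle_{(\tau_n+\delta)\wedge T} - \langle M^n\rangle_{\tau_n}]$ by $\sigma^2\delta + o(1)$; but convergence in probability of $\sup_{t\le T}|\langle M^n\rangle_t - \sigma^2 t|$ does not control its \emph{expectation} without uniform integrability of the brackets, which square-integrability of each $M^n$ alone does not supply. The clean fix is either Lenglart's domination inequality, which bounds $P(|M^n_{(\tau_n+\delta)\wedge T} - M^n_{\tau_n}| \geq \varepsilon)$ directly in terms of a probability statement about the bracket increment, or localization at $\tau^{n,K} = \inf\{t : \langle M^n\rangle_t \geq K\}$ with $K > \sigma^2 T$, which by your uniform convergence equals $T$ with probability tending to one. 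Second, and for the same reason, passing to the limit in the martingale relations for $M^n$ and for $(M^n)^2 - \langle M^n\rangle$ requires uniform integrability of $M^n_t$ and of $(M^n_t)^2$, which is not ``implicit in (ii)'': the standard remedy is to stop when either the bracket or $|M^n|$ exceeds a fixed level and to use condition (i) to control the overshoot at the stopping time, so that the stopped martingales are uniformly bounded and all moments are under control; since the stopping occurs with vanishing probability, the distributional limit is unaffected. With these routine patches your proof is complete and is in substance the proof found in the cited references.
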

 In our present situation, to check i) of the previous proposition  we first note that the jumps of $\mc M_t^n(f)$ are the same of $\mc Y_t^n(f)$, since the other terms in \eqref{eq3.1} are continuous. Therefore,
\[
\sup_{t \in [0,T]} \big|\mc M_t^n(f) - \mc M_{t-}^n(f)\big| \leq \frac{\|f\|_\infty}{\sqrt n}
\]
and the jumps of $\{\mc M_t^n(f); t \in [0,T]\}$ are asymptotically negligible. Now to prove ii), we first note  that
\begin{equation*}
\begin{split}
\bb E_n \Big[ n^{\alpha-1} &\sum_{x,y} p(y-x) \big(\eta_s^n(y) -\eta_t^n(x)\big)^2 \big(f\big(\tfrac{y}{n}\big) - f\big(\tfrac{x}{n}\big)\big)^2\Big] \\
&= \frac{n^{\alpha-1}}{2} \sum_{x-y>0} (p(y-x)+p(x-y)) 2\rho(1-\rho)\big(f\big(\tfrac{y}{n}\big) - f\big(\tfrac{x}{n}\big)\big)^2\\
&\quad    \quad +\frac{n^{\alpha-1}}{2} \sum_{x-y<0} (p(y-x)+p(x-y)) 2\rho(1-\rho)\big(f\big(\tfrac{y}{n}\big) - f\big(\tfrac{x}{n}\big)\big)^2\\
&=n^{\alpha-1} \sum_{x,y} s(y-x) 2\rho(1-\rho) \big(f\big(\tfrac{y}{n}\big) - f\big(\tfrac{x}{n}\big)\big)^2\\
		&= 4\rho(1-\rho) \mc E_n(f) \xrightarrow{n \to \infty} 4\rho(1-\rho) \mc E(f),
\end{split}
\end{equation*}
and this shows that $\bb E_n [\<\mathcal {M}_t^n(f)\>]$ converges, as $n\to\infty$, to $4\rho(1-\rho) \mc E(f)$. Now we prove that  $\bb E[(\<\mathcal {M}_t^n(f)\>-\bb E_n [\<\mathcal {M}_t^n(f)\>])^2]$ vanishes, as $n\to\infty$, which will imply ii) of the previous proposition. 
A simple computation shows that the variance of $\<\mathcal {M}_t^n(f)\>$ is equal to\footnote{
Along these notes, we will denote by $c_i(\rho)$ various constants which depend only on $\rho$. The exact value of these constants will not be important; only the fact that they depend only on $\rho$.
}
\begin{equation}\label{imp_terms}
\begin{split}
&c_1(\rho) n^{2\alpha -2} \sum_{x,y} s(y-x)^2 \big(f\big(\tfrac{y}{n}\big) - f\big(\tfrac{x}{n}\big) \big)^4\\
	+&c_2(\rho)  n^{2\alpha-2} \sum_{x} \Big(\sum_{y} s(y-x)\big(f\big(\tfrac{y}{n}\big)-f\big(\tfrac{x}{n}\big)\big)^2\Big)^2.
	\end{split}
\end{equation}
From the computations of Appendix \ref{sec_a1}, the first term of last expression is of order $n^{2\alpha-5}$, while the second one 
 is of order  $n^{\alpha-3}$. Therefore, both vanish as $n\to
 \infty$. As a consequence we get that
\[
\lim_{n \to \infty} \bb E_n \Big[ \Big(\<\mc M_t^n(f)\big>-4\rho(1-\rho) \mc E(f) t\Big)^2\Big]=0,
\]
which shows  that the martingales $\{\mc M_t^n(f); t \in [0,T]\}_{n \in \bb N}$ converge in distribution, as $n\to\infty$, to a Brownian motion of variance $4\rho(1-\rho) \mc E(f)$.

\subsection{Tightness of the integral terms}\label{sec:tight_integral}
Now we have to prove tightness of the integral terms $\{\int_0^t \mc Y_s^n(\mc L_n^\rho f) ds; t \in [0,T]\}_{n \in \bb N}$. For that purpose we will use the Kolmogorov-Centsov's tightness criterion that we state as follows.
\begin{proposition}[Kolmogorov-Centsov's tightness criterion]

\label{KCC}
A sequence of continuous processes $\{X_t^n\,;\, t \in [0,T]\}_{n \in \bb N}$ is tight, with respect to the uniform topology of $\mc C([0,T]; \bb R)$, if there exist constants $K, a,b >0$ such that
\begin{equation}
\label{ecKCC}
E\big[ \big|X_t^n -X_s^n\big|^a \big] \leq K |t-s|^{1+b}
\end{equation}
for any $s,t \in [0,T]$ and any $n \in \bb N$. If the processes $\{X_t^n\,;\, t \in [0,T]\}_{n \in \bb N}$ are stationary, it is enough to verify that
\[
E\big[\big|X_t^n-X_0^n\big|^a\big] \leq Kt^{1+b}.
\]
\end{proposition}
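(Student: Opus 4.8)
The plan is to deduce tightness in $\mc C([0,T];\bb R)$ from the classical characterization of tightness for continuous processes, namely that $\{X_t^n; t\in[0,T]\}_{n\in\bb N}$ is tight in the uniform topology provided the laws of the initial values $\{X_0^n\}_{n\in\bb N}$ are tight in $\bb R$ and the modulus of continuity is controlled uniformly in $n$, i.e. for every $\eta>0$ one has $\lim_{\delta\to0}\limsup_{n\to\infty} P\big(\sup_{|t-s|\leq\delta}|X_t^n-X_s^n|\geq\eta\big)=0$. In the applications of interest the processes start at a deterministic value (all the processes to which we apply this criterion vanish at $t=0$), so the first condition is automatic and the entire content lies in extracting the uniform modulus-of-continuity estimate from the moment bound \eqref{ecKCC} by a dyadic chaining argument.

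First I would fix an exponent $\gamma\in(0,b/a)$ and, for each $m\in\bb N$, introduce the maximal dyadic increment $K_m^n=\max_{1\leq j\leq 2^m}\big|X_{jT2^{-m}}^n-X_{(j-1)T2^{-m}}^n\big|$. A union bound combined with Markov's inequality and \eqref{ecKCC} yields
\[
P\big(K_m^n\geq 2^{-\gamma m}\big)\leq \sum_{j=1}^{2^m}\frac{E\big[\big|X_{jT2^{-m}}^n-X_{(j-1)T2^{-m}}^n\big|^a\big]}{2^{-\gamma m a}}\leq K\,T^{1+b}\,2^{-m(b-\gamma a)}.
\]
Since $b-\gamma a>0$, the right-hand side is summable in $m$ and, crucially, it does not depend on $n$ because $K,a,b$ are uniform in $n$. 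Consequently the event $B_M^n=\bigcup_{m>M}\{K_m^n\geq 2^{-\gamma m}\}$ obeys the tail bound $\sup_n P(B_M^n)\leq K\,T^{1+b}\sum_{m>M}2^{-m(b-\gamma a)}$, which tends to $0$ as $M\to\infty$ uniformly in $n$.

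On the complement $(B_M^n)^c$ I would run the standard telescoping estimate: any two dyadic times with $|t-s|\leq T2^{-M}$ can be joined through a chain of nested dyadic points whose consecutive gaps are bounded by the quantities $K_m^n$ with $m>M$, so that $|X_t^n-X_s^n|\leq 2\sum_{m>M}2^{-\gamma m}\leq C_\gamma\,2^{-\gamma M}$ for a constant $C_\gamma$ depending only on $\gamma$; continuity of the trajectories then extends this Hölder-type bound from dyadic to arbitrary times. Hence $\sup_{|t-s|\leq T2^{-M}}|X_t^n-X_s^n|\leq C_\gamma 2^{-\gamma M}$ on $(B_M^n)^c$, and given $\eta,\varepsilon>0$ one chooses $M$ so large that $C_\gamma 2^{-\gamma M}<\eta$ and $\sup_n P(B_M^n)<\varepsilon$; this delivers the required uniform modulus-of-continuity control with $\delta=T2^{-M}$, completing the proof of tightness.

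Finally, for the stationary refinement I would observe that stationarity in time forces $X_t^n-X_s^n$ to have the same law as $X_{t-s}^n-X_0^n$, so the single-point bound $E[|X_t^n-X_0^n|^a]\leq K\,t^{1+b}$ immediately upgrades to the two-point bound \eqref{ecKCC}, and the argument above applies verbatim. Everything here is routine except the chaining step, which is the one genuinely delicate point: one must organize the nested dyadic approximations so that the per-scale errors telescope against the geometric series $\sum_{m>M}2^{-\gamma m}$, and check that the resulting estimate, and in particular the constant $C_\gamma$, is independent of $n$. The uniformity in $n$ throughout is precisely what distinguishes this tightness statement from the classical Kolmogorov--Centsov continuity theorem for a single process.
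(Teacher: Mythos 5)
Your proof is correct. The paper states this classical criterion without proof (it is the standard Kolmogorov--Centsov tightness criterion, as found for instance in Chapter 11 of \cite{KipLan} or in Karatzas--Shreve), so there is no internal argument to compare against; your dyadic chaining argument is precisely the textbook proof, and you handle correctly the two points that actually matter for the sequential version: the constants $K,a,b$ in \eqref{ecKCC} are uniform in $n$, so that $\sup_n P(B_M^n)\to 0$ as $M\to\infty$, and the stationary refinement follows because stationarity gives that $X_t^n-X_s^n$ has the same law as $X_{t-s}^n-X_0^n$. You are also right to flag the implicit hypothesis that the laws of $\{X_0^n\}_{n\in\bb N}$ be tight --- the increment bound \eqref{ecKCC} alone does not exclude, say, $X_t^n\equiv n$ --- and to note that this is harmless here, since every process to which the paper applies the criterion (Proposition \ref{p2.2.3} and the drift estimates of Sections \ref{s3.2}--\ref{s3.4}) vanishes at $t=0$.
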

In our present situation we want to apply last result to integral processes. Combining it with the  Cauchy-Schwarz inequality we have that 
\[
E\Big[ \Big( \int_s^t x_n(s') ds' \Big)^2\Big] \leq K |t-s|^2
\] and from this we have the criterion that we will employ in what follows. 
\begin{proposition}
\label{p2.2.3}
A sequence of processes of the form $\Big\{ \int_0^t x_n(s) ds\,;\, t \in [0,T]\Big\}_{n\in\mathbb{N}}$ is tight, with respect to the uniform topology in $\mc C([0,T]; \bb R)$, if
\[
\lim_{n \to \infty} \sup_{0 \leq t \leq T} E [ x_n(t)^2] < +\infty.
\]
\end{proposition}

Now, in order to check that   $\{\int_0^t \mc Y_s^n(\mc L_n^\rho f) ds; t \in [0,T]\}_{n \in \bb N}$ is tight, first, we observe  that
\[
\bb E_n \big[ \mc Y_s^n(\mc L_n^\rho f)^2\big] = \frac{\rho(1-\rho)}{n} \sum_{x} \big(\mc L_n^\rho f\big(\tfrac{x}{n}\big)\big)^2.
\]
By Proposition \ref{p2} this sum converges, as $n\to\infty$, to $\rho(1-\rho) \int_{\mathbb{R}} (\mc L^\rho f)^2(x) dx$, and therefore the hypotheses of Proposition \ref{p2.2.3} are satisfied. This shows  that  the sequence $\{\int_0^t \mc Y_s^n(\mc L_n^\rho f) ds\,;\, t \in [0,T]\}_{n \in \bb N}$ is tight. We remark that Proposition \ref{p2} is stated for different operators but the same result holds in the cases considered here. 

Note that all the previous results hold for any $\alpha\leq 3/2$.

\subsection{Tightness of $\{A_t^n(f); t \in [0,T]\}_{n \in \bb N}$}\label{sec:tight_a_field}
 This is  really the difficult term. The problem comes from the fact that the spatial normalization is $n^{\alpha-3/2}$ instead of $n^{-1/2}$. Therefore, for $\alpha >1$ we need to make efficient use of the time integration in order to show  tightness of this term. For $\alpha <3/2$ we will see that this term is asymptotically negligible, while for $\alpha = 3/2$ we will show that  it is asymptotically equivalent to a function of the density fluctuation field. Since the arguments to prove tightness of $\{A_t^n(f); t \in [0,T]\}_{n \in \bb N}$ depend on the regime of $\alpha$ we  devote separate sections for them.

\subsubsection {{Tightness of $\{A_t^n(f); t \in [0,T]\}_{n \in \bb N}$:\bf{ the case $\alpha \leq 1$.}}}\label{s3.1} 

\quad

\quad

Recall the definition of $A_t^n(f)$ from \eqref{Afield}.
Note that
\begin{equation}\label{int:field_a}
\begin{split}
\bb E_n \Big[ \Big( n^{\alpha -1/2}& \sum_{x,y} a(y-x) \bar \eta_t^n(x) \bar \eta_t^n(x) \big(f\big(\tfrac{y}{n}\big) -f \big( \tfrac{x}{n}\big)\big)\Big)^2 \Big]\\
		=& c_3(\rho) n^{2\alpha -1} \sum_{x,y} a(y-x)^2 \big(f\big(\tfrac{y}{n}\big) -f \big( \tfrac{x}{n}\big)\big)^2.\end{split}
\end{equation}
From the computations of Appendix \ref{sec_a2}, if $\alpha <1$ then the expectation above is of order $o(1)$; while if $\alpha =1$  it is of order $O(1)$. In any case, by the compactness criterion of Proposition \ref{p2.2.3} we conclude that the sequence $\{A_t^n(f); t \in [0,T]\}_{n \in \bb N}$ is tight for any $\alpha\leq 1$.

With last result we  end the proof of tightness of $\{\mc Y_t^n; t \in [0,T]\}_{n \in \bb N}$ in the case $\alpha \leq 1$. 

\subsubsection{{Tightness of $\{A_t^n(f); t \in [0,T]\}_{n \in \bb N}$: \bf{the case $1<\alpha <1+\frac{2}{5+\sqrt{33}}$}}}\label{s3.2}

\quad

\quad

Below, unless explicitly stated, we {\em do not} assume $1< \alpha <1+\frac{2}{5+\sqrt{33}}$ on the computations made in this section. The proof of tightness given in this section is quite technical but we note that it will rely on several applications of Proposition \ref{p2.2.3}. Therefore we will be interested in obtaining upper bounds for the second moments of $A_t^n(f)$. To do that successfully,  we split $A_t^n(f)$ into several intermediate additive functionals of the process, and at each step we will obtain upper bounds on the second moments of these functionals, from where we will conclude tightness of $A_t^n(f)$.

The first step in this procedure consists in noting that the sum in the definition of $A_t^n(f)$  can be restricted to $|y-x| \leq K_n$ for $K_n \gg n^{\frac{2\alpha-2}{2\alpha-1}}$ thanks to the next lemma whose proof is given in Appendix \ref{sec_a3}.

\begin{lemma}
\label{lem_1} For $K_n \gg n^{\frac{2\alpha-2}{2\alpha-1}}$ we have that
\begin{equation}\label{int_3}
\lim_{n \to \infty} \bb E_n \Big[ \Big( n^{\alpha - 1/2} \!\!\!\!\sum_{|y-x| \geq K_n}\!\!\!\! a(y-x) \big( f\big( \tfrac{y}{n} \big) -f \big( \tfrac{x}{n} \big) \big) \bar{\eta}_t^n(x) \bar{\eta}_t^n(y)\Big)^2\Big] =0.
\end{equation}
\end{lemma}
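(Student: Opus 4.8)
The plan is to use stationarity to reduce \eqref{int_3} to a purely static second-moment bound under $\mu_\rho$, and then to control the resulting sum over large jumps by a crude Cauchy--Schwarz estimate on the kernel. Since $\mu_\rho$ is invariant, $\eta_t^n$ has law $\mu_\rho$ for every $t$, and by translation invariance of $\mu_\rho$ the Galilean shift is irrelevant for the expectation. Writing $\bar\eta(x)=\eta(x)-\rho$ and
\[
g_n(x,y) = n^{\alpha-1/2}\, a(y-x)\big(f(\tfrac{y}{n})-f(\tfrac{x}{n})\big)\mathbf{1}(|y-x|\ge K_n),
\]
the quantity in \eqref{int_3} equals $E_{\mu_\rho}\big[\big(\sum_{x,y} g_n(x,y)\bar\eta(x)\bar\eta(y)\big)^2\big]$. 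The kernel $g_n$ is symmetric and vanishes on the diagonal, since both $a(y-x)$ and $f(\tfrac yn)-f(\tfrac xn)$ are antisymmetric under $x\leftrightarrow y$. Under the product measure $\mu_\rho$ the variables $\bar\eta(x)$ are i.i.d., centered, with variance $\rho(1-\rho)$, so $E_{\mu_\rho}[\bar\eta(x)\bar\eta(y)\bar\eta(x')\bar\eta(y')]$ survives only for the pairings $(x',y')\in\{(x,y),(y,x)\}$ with $x\neq y$. This is exactly the computation leading to \eqref{int:field_a}, now with the sum restricted to $|y-x|\ge K_n$, and it gives
\[
c_3(\rho)\, n^{2\alpha-1}\sum_{|y-x|\ge K_n} a(y-x)^2\big(f(\tfrac yn)-f(\tfrac xn)\big)^2.
\]

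Next I would estimate this sum. Setting $z=y-x$ and using $a(z)^2\le C|z|^{-(2+2\alpha)}$, it factorizes as $\sum_{|z|\ge K_n} a(z)^2\,\sigma_n(z)$ with $\sigma_n(z)=\sum_x\big(f(\tfrac{x+z}{n})-f(\tfrac xn)\big)^2$. I would bound $\sigma_n(z)$ in two complementary ways: the brute-force bound $\sigma_n(z)\le 4\sum_x f(\tfrac xn)^2\le Cn\|f\|^2$, which is uniform in $z$ and appropriate for very large jumps; and the refined bound $\sigma_n(z)\le C\tfrac{z^2}{n}\|f'\|^2$, obtained from the mean value theorem (equivalently by telescoping the unit shift), which is the better one when $|z|\le n$.

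Finally I would split the $z$-sum at $|z|=n$. On $|z|>n$ the crude bound together with $\sum_{|z|>n}|z|^{-(2+2\alpha)}\le Cn^{-(1+2\alpha)}$ yields a contribution of order $n^{2\alpha}\cdot n^{-(1+2\alpha)}=n^{-1}\to0$. On $K_n\le|z|\le n$ the refined bound gives a term of order $n^{2\alpha-2}\sum_{K_n\le|z|\le n}|z|^{-2\alpha}$, and since $2\alpha>1$ the tail sum is $\le CK_n^{1-2\alpha}$; hence this piece is of order $n^{2\alpha-2}K_n^{1-2\alpha}$, which tends to $0$ precisely when $K_n^{2\alpha-1}\gg n^{2\alpha-2}$, that is $K_n\gg n^{\frac{2\alpha-2}{2\alpha-1}}$, the stated hypothesis. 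If $K_n\ge n$ the first range is empty and the crude estimate alone closes the argument.

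The only genuinely lossy step is the passage to the static second moment: this Cauchy--Schwarz bound discards all the smoothing provided by the time integral and the dynamics, which is why it can reach jumps only down to the scale $n^{\frac{2\alpha-2}{2\alpha-1}}$ and no further. Handling the complementary small jumps $|z|<K_n$ is exactly what forces the Kipnis--Varadhan/spectral-gap input of Proposition \ref{p2.3.4} in the sequel; within this lemma itself the main care is simply in choosing the correct bound on $\sigma_n(z)$ in each range so that the threshold comes out sharp.
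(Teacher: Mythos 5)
Your proof is correct and takes essentially the same route as the paper's argument in Appendix \ref{sec_a3}: stationarity reduces \eqref{int_3} to the static variance $c_4(\rho)\, n^{2\alpha-1}\sum_{|y-x|\ge K_n} a(y-x)^2\big(f(\tfrac{y}{n})-f(\tfrac{x}{n})\big)^2$, which is then split at jump size of order $n$, with the crude $L^2$ bound on the far range giving $O(n^{-1})$ and the gradient bound on $K_n\le |z|\lesssim n$ giving $O\big(n^{2\alpha-2}K_n^{1-2\alpha}\big)$, vanishing exactly when $K_n \gg n^{\frac{2\alpha-2}{2\alpha-1}}$. The only cosmetic difference is that you control the middle range via telescoping and Cauchy--Schwarz with $\|f'\|^2$, while the paper invokes the Schwartz-seminorm Taylor bound \eqref{useful_bound}.
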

As a consequence of the previous lemma, we can take any sequence $\{K_n; n \in \bb N\}$ satisfying the condition $K_n \gg n^{\frac{2\alpha-2}{2\alpha-1}}$  and we can restrict the sum in the definition of $A_t^n(f)$ to $|y-x| \leq K_n$; the rest of the sum is tight by Proposition \ref{p2.2.3}. Note that $K_n = o(n)$. In order to simplify the notation, we will drop the subscript $n$ from $K_n$.  Now we use  \eqref{useful_bound}, and we have, for $x,y \in \bb Z$ such that $|y-x| \leq K$, that
\[
f\big(\tfrac{y}{n} \big) - f\big( \tfrac{x}{n} \big) \leq  \tfrac{y-x}{n} f' \big( \tfrac{x}{n} \big) + \tfrac{(y-x)^2}{n^2} \|f''((y-x)/n)\|_{K/n,\infty},
\]
where $\|f'' (u)\|_{M,\infty}=\sup_{|z-u|\leq M}|f''(z)|$. At this point, we put the previous equality back into $A_t^n(f)$ and we use the next lemma, whose proof is given in  Appendix \ref{sec_a4}.
\begin{lemma}
\label{lem_2} For $K \gg n^{\frac{2\alpha-4}{2\alpha-3}}$ we have that
\begin{equation}\label{int_4}
\lim_{n \to \infty} \bb E_n \Big[ \Big( n^{\alpha-1/2} \!\!\!\!\sum_{|y-x| \leq K}\!\!\!\! a(y-x) \bar{\eta}_t^n(x) \bar{\eta}_t(y)^n \frac{(y-x)^2}{n^2}  \|f''((y-x)/n)\|_{K/n,\infty}\Big)^2\Big]=0.
\end{equation}
\end{lemma}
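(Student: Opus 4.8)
The plan is to exploit the product structure of the stationary measure $\mu_\rho$ and reduce \eqref{int_4} to a purely combinatorial estimate on the jump kernel $a(\cdot)$. Since $\mu_\rho$ is invariant and translation invariant, the expectation in \eqref{int_4} does not depend on $t$ and the Galilean shift inside the argument of $f$ plays no role; it therefore suffices to bound $\bb E_n[W^2]$, where
\[
W = n^{\alpha-1/2}\!\!\sum_{1\le|y-x|\le K}\!\! a(y-x)\,\tfrac{(y-x)^2}{n^2}\,\big\|f''(\tfrac{x}{n})\big\|_{K/n,\infty}\,\bar\eta(x)\,\bar\eta(y)
\]
and $\bar\eta(x)=\eta(x)-\rho$ are, under $\mu_\rho$, independent with mean zero and variance $\rho(1-\rho)$. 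Here I use the Taylor remainder in the localized form $\|f''(\tfrac{x}{n})\|_{K/n,\infty}$, which is what \eqref{useful_bound} yields once $|y-x|\le K$, and which (unlike a factor depending only on $y-x$) makes the forthcoming sum over $x$ convergent; the term $x=y$ is absent because $a(0)=0$.

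The second moment of $W$ is a fourth-order polynomial in the $\bar\eta$'s. Writing $c(x,y)=a(y-x)\tfrac{(y-x)^2}{n^2}\|f''(\tfrac{x}{n})\|_{K/n,\infty}$, the mean-zero, independence and $a(0)=0$ properties force $\bb E_n[\bar\eta(x)\bar\eta(y)\bar\eta(x')\bar\eta(y')]$ to vanish unless $\{x',y'\}=\{x,y\}$, so that
\[
\bb E_n[W^2] = \big(\rho(1-\rho)\big)^2\, n^{2\alpha-1}\!\!\sum_{1\le|y-x|\le K}\!\!\big[c(x,y)^2 + c(x,y)c(y,x)\big].
\]
By $c(x,y)c(y,x)\le\tfrac12\big(c(x,y)^2+c(y,x)^2\big)$ and the symmetry of the sum under $x\leftrightarrow y$, this is at most $2(\rho(1-\rho))^2 n^{2\alpha-1}\sum c(x,y)^2$, which is the brute-force Cauchy--Schwarz bound alluded to in the text. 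I would deliberately not try to exploit the near-cancellation coming from the antisymmetry of $a$: the crude bound is exactly what produces the critical scale that appears in the statement.

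It remains to evaluate $\sum_{1\le|y-x|\le K} c(x,y)^2$. Factoring over $z=y-x$ and over $x$, and using the explicit form of $a$ from the excerpt to get $a(z)^2 z^4=\tfrac{(c^+-c^-)^2}{4}|z|^{2-2\alpha}$, I obtain
\[
\bb E_n[W^2] = c_0(\rho)\, n^{2\alpha-5}\Big(\sum_x \big\|f''(\tfrac{x}{n})\big\|_{K/n,\infty}^2\Big)\Big(\sum_{1\le|z|\le K} a(z)^2 z^4\Big).
\]
Since $K=o(n)$, the first bracket is a Riemann sum with $\tfrac1n\sum_x\|f''(\tfrac{x}{n})\|_{K/n,\infty}^2\to\|f''\|^2$, contributing a factor $n\|f''\|^2$; and since $\alpha<\tfrac32$ gives $2-2\alpha\in(-1,0)$, the second bracket is of order $K^{3-2\alpha}$ with $3-2\alpha\in(0,1)$. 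Hence
\[
\bb E_n[W^2]\le c_1(\rho)\,\|f''\|^2\, n^{2\alpha-4}\, K^{3-2\alpha},
\]
and this tends to $0$ precisely when $K^{3-2\alpha}=o(n^{4-2\alpha})$, that is when $K$ is controlled by the critical scale $n^{(4-2\alpha)/(3-2\alpha)}=n^{(2\alpha-4)/(2\alpha-3)}$ appearing in the statement. Once $\bb E_n[W^2]\to0$ is in hand, tightness and asymptotic negligibility of this Taylor-remainder piece of $A_t^n(f)$ follow from Proposition \ref{p2.2.3} after the time integration, exactly as for the far-field part.

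Two remarks on the logic close the argument. First, the exponent $(2\alpha-4)/(2\alpha-3)$ exceeds $1$ throughout $\alpha\in(1,\tfrac32)$, so in view of $K=o(n)$ the bound above decays exactly in the regime where $n^{(2\alpha-4)/(2\alpha-3)}$ is an \emph{upper} cut-off for $K$; combined with the lower bound $K\gg n^{(2\alpha-2)/(2\alpha-1)}$ coming from Lemma \ref{lem_1}, and since $\tfrac{2\alpha-2}{2\alpha-1}<1<\tfrac{2\alpha-4}{2\alpha-3}$, one is left with the genuinely nonempty window $n^{(2\alpha-2)/(2\alpha-1)}\ll K\ll n^{(2\alpha-4)/(2\alpha-3)}$ in which both the big-jump and the remainder contributions vanish, and one can run the estimate of Proposition \ref{p2.3.4} on the surviving first-order term. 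Second, I expect the only delicate point to be the power counting: the gain is produced entirely by the two factors of $n^{-1}$ in $(y-x)^2/n^2$ beating the single power of $n$ lost to the $\tfrac1n\sum_x$ Riemann sum, together with the decay $|z|^{2-2\alpha}$ of $a(z)^2 z^4$; it is precisely the crudeness of the Cauchy--Schwarz step that confines this estimate to the range $1<\alpha<1+\tfrac{2}{5+\sqrt{33}}$, and makes the multiscale refinement indispensable for larger $\alpha$.
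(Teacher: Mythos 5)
Your proof is correct and takes essentially the same route as the paper's Appendix~\ref{sec_a4}: a brute-force second-moment bound under $\mu_\rho$ (which you compute explicitly where the paper invokes Cauchy--Schwarz), the change of variables $z=y-x$ with $a(z)^2z^4\sim |z|^{2-2\alpha}$, and the Riemann-sum estimate on $\sum_x\|f''(x/n)\|^2_{K/n,\infty}$, arriving at the same bound of order $K^{3-2\alpha}/n^{4-2\alpha}$. You also correctly repair two slips in the statement that the paper's own proof implicitly corrects --- the sup-norm must be centred at $x/n$ (a factor depending only on $y-x$ would make the sum over $x$ diverge), and the hypothesis should be the upper cut-off $K\ll n^{(2\alpha-4)/(2\alpha-3)}$, which holds automatically since $K=o(n)$ and that exponent exceeds $1$ for $\alpha\in(1,3/2)$.
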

Since from Lemma \ref{lem_1} we have $K \gg n^{\frac{2\alpha-2}{2\alpha-1}}$, for this choice of $K$, \eqref{int_4} follows.  
Now,  we have to obtain upper an upper bound for
\begin{equation}
\label{ec3.2.2_new_new}
\bb E_n\Big[\Big(\int_0^t n^{\alpha -3/2} \!\!\!\! \sum_{|y-x| \leq K}\!\!\!\! (y-x) a(y-x)  \bar{\eta}_s^n(x) \bar{\eta}_s^n(y) f'\big(\tfrac{x}{n}\big) ds\Big)^2\Big].
\end{equation}

For that purpose, we will have to use more sophisticated estimates than the ones we used before. To state this estimates properly, we have to introduce  some notation.

For each $\sigma\in [0,1]$, let $L^2(\mu_\sigma)$ be the Hilbert space associated to the measure $\mu_\sigma$, that is, the space of functions $F: \Omega \to \bb R$ such that $\int  F^2 d \mu_\sigma < +\infty$. We denote by $\<F,G\>_\sigma$ the inner product in $L^2(\mu_\sigma)$. For $F \in L^2(\mu_\sigma)$ we define
\[
\|F\|^2_{-1} = \sup_{G \text{ local}} \big\{ 2 \<F,G\>_\sigma - \<G,-n^\alpha L G\>_\sigma\big\}.
\]
Note that $\|F\|^2_{-1}=+\infty$ if $\int F d\mu_\sigma \neq 0$. The relevance of this quantity is given by the following inequality:

\begin{proposition}[Kipnis-Varadhan inequality]
\label{p2.3.1}
Assume that $\eta_0^n$ has distribution $\mu_\sigma$ and let $F: [0,T] \to L^2(\mu_\sigma)$.  Then,
\[
\bb E_n\Big[ \Big( \sup_{0 \leq t \leq T} \int_0^t F(s,\eta_s^n) ds \Big)^2\Big] \leq 14 \int_0^T \|F(t,\cdot)\|_{-1}^2 dt.
\]
\end{proposition}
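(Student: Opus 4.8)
The plan is to establish this as the standard \emph{Kipnis--Varadhan inequality} for the stationary, \emph{non-reversible} Markov process $\{\eta_s^n\}$ with generator $n^\alpha L$ and invariant measure $\mu_\sigma$. The first observation is that the antisymmetric part of the generator does not enter the quadratic form: since $\<G, -n^\alpha L G\>_\sigma = \<G, -n^\alpha \mc S G\>_\sigma$ for every local $G$ (the drift part being antisymmetric in $L^2(\mu_\sigma)$), the norm $\|\cdot\|_{-1}$ is genuinely the $H_{-1}$ norm associated to the symmetric Dirichlet form. I would first treat a time-independent $F$ and recover the time-dependent statement at the end.

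For fixed $\lambda>0$ introduce the resolvent $u_\lambda = (\lambda - n^\alpha L)^{-1}F \in L^2(\mu_\sigma)$, which solves $n^\alpha L u_\lambda = \lambda u_\lambda - F$. By Dynkin's formula the process
\[
M_t = u_\lambda(\eta_t^n) - u_\lambda(\eta_0^n) - \int_0^t n^\alpha L u_\lambda(\eta_s^n)\, ds
\]
is a martingale, and substituting the resolvent equation gives
\[
\int_0^t F(\eta_s^n)\, ds = \lambda\int_0^t u_\lambda(\eta_s^n)\, ds - u_\lambda(\eta_t^n) + u_\lambda(\eta_0^n) + M_t.
\]
Testing the variational formula for $\|\cdot\|_{-1}$ with $G=u_\lambda$ and using the identity $\<u_\lambda, -n^\alpha \mc S u_\lambda\>_\sigma = \<F,u_\lambda\>_\sigma - \lambda\|u_\lambda\|^2_{L^2(\mu_\sigma)}$ yields the two a priori bounds $\<F,u_\lambda\>_\sigma + \lambda\|u_\lambda\|^2_{L^2(\mu_\sigma)} \le \|F\|_{-1}^2$ and $\<u_\lambda, -n^\alpha \mc S u_\lambda\>_\sigma \le \|F\|_{-1}^2$. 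The martingale term is controlled by Doob's $L^2$ maximal inequality together with the carr\'e du champ identity $\bb E_n[\<M\>_T] = 2T\<u_\lambda, -n^\alpha \mc S u_\lambda\>_\sigma \le 2T\|F\|_{-1}^2$, while $\lambda\int_0^t u_\lambda\,ds$ is negligible as $\lambda\to 0$, since Cauchy--Schwarz bounds its second moment by $T^2\lambda\,(\lambda\|u_\lambda\|^2_{L^2(\mu_\sigma)}) \le T^2\lambda\|F\|_{-1}^2 \to 0$.

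The \textbf{main obstacle} is the pair of boundary terms $u_\lambda(\eta_0^n)$ and $u_\lambda(\eta_t^n)$: their $L^2(\mu_\sigma)$ norms both equal $\|u_\lambda\|^2_{L^2(\mu_\sigma)}$, which diverges like $\lambda^{-1}$, and one must moreover control $\sup_t$ of a function evaluated along the trajectory. The resolution is the forward--backward martingale decomposition characteristic of Kipnis--Varadhan: writing the increment $u_\lambda(\eta_t^n)-u_\lambda(\eta_0^n)$ once with the forward martingale (generator $n^\alpha L$) and once using the time-reversed process (generator $n^\alpha L^{\ast}$), the symmetrization forces the $\lambda^{-1}$-divergent contributions to cancel, leaving the increment written as one half of a difference of two martingales whose expected brackets are again controlled by $\<u_\lambda, -n^\alpha \mc S u_\lambda\>_\sigma \le \|F\|_{-1}^2$. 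Doob applied to each martingale then bounds the supremum uniformly in $\lambda$, and letting $\lambda\to 0$ disposes of the remaining $\lambda$-dependent pieces.

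Finally, for the genuinely time-dependent $F(s,\cdot)$ I would run the same scheme with the frozen resolvent $u_\lambda^s=(\lambda-n^\alpha L)^{-1}F(s,\cdot)$, using the space--time Dynkin martingale associated to $s\mapsto u_\lambda^s(\eta_s^n)$; this produces an additional term involving $\partial_s u_\lambda^s$, which is handled by a short smoothing/approximation argument in the time variable (reducing to $F$ piecewise constant in $s$ and passing to the limit). Integrating the instantaneous estimates in $s$ then yields the right-hand side $\int_0^T\|F(t,\cdot)\|_{-1}^2\,dt$ rather than $T\sup_t\|F(t,\cdot)\|_{-1}^2$. Keeping track of the constants throughout --- Doob's factor $4$, the factor $2$ from the carr\'e du champ, and the symmetrization in the forward--backward step --- produces the explicit constant $14$.
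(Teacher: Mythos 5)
The paper does not actually prove Proposition \ref{p2.3.1}: it states the inequality and defers entirely to \cite{ChaLanOll} (noting the reversible-case origin in \cite{KipVar}), and it even stresses afterwards that only Proposition \ref{p2.3.4} is used in the sequel. So your proposal must be judged as a reconstruction of the standard argument. Most of your scheme is the correct classical machinery, and your constant bookkeeping is genuinely on track: with the choice $\lambda = 1/T$, the four terms of the resolvent decomposition obey $\bb E_n[u_\lambda(\eta_0^n)^2] = \bb E_n[u_\lambda(\eta_t^n)^2] = \|u_\lambda\|^2_{L^2(\mu_\sigma)} \leq \tfrac{T}{2}\|F\|_{-1}^2$, $\bb E_n[M_T^2] \leq 2T\|F\|_{-1}^2$ and $\bb E_n[(\lambda\int_0^T u_\lambda\, ds)^2] \leq \tfrac{T}{2}\|F\|_{-1}^2$, whence $(a+b+c+d)^2 \leq 4(a^2+b^2+c^2+d^2)$ gives $4(\tfrac12+\tfrac12+2+\tfrac12) = 14$ at a \emph{fixed} time $t$, with no reversibility and no sector condition. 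Your identities in the second paragraph (in particular $\langle F, u_\lambda\rangle_\sigma = \lambda\|u_\lambda\|^2_{L^2(\mu_\sigma)} + \langle u_\lambda, -n^\alpha S u_\lambda\rangle_\sigma$, writing $S$ for the symmetric part of $L$ in $L^2(\mu_\sigma)$) are all correct.

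The genuine gap is your third paragraph, i.e.\ exactly the step the citation exists for: the forward--backward cancellation you invoke is a \emph{reversible-only} mechanism. Writing $\hat M$ for the Dynkin martingale of the time-reversed process (whose generator is $n^\alpha L^*$), subtracting the forward and backward decompositions gives
\[
2\big(u_\lambda(\eta_t^n) - u_\lambda(\eta_0^n)\big) = M_t - \big(\hat M_T - \hat M_{T-t}\big) + \int_0^t n^\alpha (L - L^*)\, u_\lambda(\eta_s^n)\, ds,
\]
so the increment is \emph{not} one half of a difference of two martingales unless $L = L^*$: the antisymmetric term $2\int_0^t n^\alpha A u_\lambda\, ds$, $A = \tfrac12(L-L^*)$, survives. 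Bounding it would require an estimate of $\|n^\alpha A u_\lambda\|_{-1}$ in terms of $\langle u_\lambda, -n^\alpha S u_\lambda\rangle_\sigma$, i.e.\ a sector condition --- unavailable for the asymmetric long-jump exclusion (here $c^+ \neq c^-$ is the whole point, and avoiding sector conditions is precisely why this version of the inequality is needed). Nor can you fix it by substituting $n^\alpha A u_\lambda = (\lambda u_\lambda - F) - n^\alpha S u_\lambda$ and the summed identity $2\int_0^t n^\alpha S u_\lambda\, ds = -\big(M_t + \hat M_T - \hat M_{T-t}\big)$: chasing these relations merely regenerates the original decomposition of $\int_0^t F\,ds$, so the maneuver is circular. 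Consequently your control of $\sup_{0\le t\le T}$ of the boundary terms $u_\lambda(\eta_t^n)$ is not established, and this is the one place where the proof in \cite{ChaLanOll} proceeds by a different route than the Kipnis--Varadhan symmetrization you describe; you should consult that argument rather than the reversible one. The remaining ingredients of your sketch (resolvent a priori bounds, carr\'e du champ, Doob, and the reduction of time-dependent $F$ to piecewise-constant data) are sound as stated.
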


This kind of inequality was introduced in \cite{KipVar} in the context of stationary, reversible Markov chains. A proof of this inequality in the version stated above can be found in \cite{ChaLanOll}. In order to make effective use of Proposition \ref{p2.3.1} we need to know how to estimate $\|F\|_{-1}^2$ at least for a class of functions large enough. This is the context of the following proposition:

\begin{proposition}
\label{p2.3.2}
Let $m \in \bb N$ and let $k_0<\dots< k_m$ be a sequence on $\bb Z$. Let $\{F_1,\dots,F_m\}$ be a sequence of local functions such that $\supp(F_i) \subseteq \{k_{i-1}+1,\dots,k_i\}$ for any $i \in \{1,\dots,m\}$. Let us define $\ell_i = k_i - k_{i-1}$ for $i=1,\dots,m$.\footnote{
Note that the support of $F_i$ has at most diameter $\ell_i$.
}
Assume that $\int F_i d \mu_\sigma=0$ for any $\sigma \in [0,1]$ and any $1 \leq i \leq m$. Then, for any $\sigma \in [0,1]$
\[
\big\| F_1 + \dots + F_m \big\|^2_{-1} \leq \kappa \sum_{i=1}^m\frac{\ell_i^\alpha}{n^\alpha} \int F_i^2 d\mu_\sigma.
\]
\end{proposition}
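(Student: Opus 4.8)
The plan is to bound the variational quantity $\|F_1+\dots+F_m\|_{-1}^2$ directly from its definition. Fix a reference density $\sigma \in [0,1]$ and abbreviate $F = F_1 + \dots + F_m$. By definition,
\[
\|F\|_{-1}^2 = \sup_{G \text{ local}} \Big\{ 2\<F,G\>_\sigma - n^\alpha \<G,-LG\>_\sigma\Big\},
\]
and the key observation is that $n^\alpha\<G,-LG\>_\sigma$ equals a weighted Dirichlet form. Indeed, since $\mu_\sigma$ is reversible for the symmetric part of the dynamics and the antisymmetric part contributes nothing to $\<G,-LG\>_\sigma$, a standard computation gives
\[
\<G,-LG\>_\sigma = \tfrac12 \sum_{x,y} s(y-x)\, c(x,y)\, \bb E_\sigma\big[(\nabla_{x,y}G)^2\big],
\]
where $c(x,y) = \sigma(1-\sigma)$ up to the usual normalization coming from $\eta(x)(1-\eta(y))$ under $\mu_\sigma$. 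I will write $D_\sigma(G) = \<G,-LG\>_\sigma$ for this Dirichlet form. The first step is therefore to record this identity cleanly, so that the quadratic penalty in the variational formula is a genuine Dirichlet form in $G$.

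The second step is the decomposition. Since $F = \sum_i F_i$ with the $F_i$ supported on disjoint consecutive blocks $B_i = \{k_{i-1}+1,\dots,k_i\}$, I would bound each cross term using a block-by-block strategy. The natural move is to split the test function's Dirichlet energy among the blocks: I want to show
\[
\|F\|_{-1}^2 \le \sum_{i=1}^m \|F_i\|_{-1,i}^2,
\]
where $\|\cdot\|_{-1,i}$ is the local $H_{-1}$ norm computed using only the bonds $\{x,y\}$ with both endpoints in (a neighborhood of) $B_i$. This superadditivity is where the disjointness of supports is essential: because the $F_i$ live on non-overlapping blocks, one can optimize the test function $G$ block by block, and the total Dirichlet penalty dominates the sum of the block penalties. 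I expect to realize this by first restricting attention to the single-block estimate and then summing, using Cauchy–Schwarz to handle the pairing $\<F_i,G\>_\sigma$ against the restriction of $G$ to $B_i$.

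The heart of the argument, and the main obstacle, is the single-block estimate: for a mean-zero local function $F_i$ supported on a block of diameter $\ell_i$,
\[
\|F_i\|_{-1,i}^2 \le \kappa\, \frac{\ell_i^\alpha}{n^\alpha}\int F_i^2\, d\mu_\sigma.
\]
This is precisely where the spectral gap enters. On the block $B_i$, the exclusion dynamics restricted there has a spectral gap, and for the long-jump dynamics the relevant gap scales like $\ell_i^{-\alpha}$ (this is the content of the spectral gap inequality stated in Appendix \ref{ap:A}); the factor $n^{-\alpha}$ comes from the speeding-up by $n^\alpha$ in the definition of $\|\cdot\|_{-1}$. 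Concretely, for any test function $G$ one bounds $2\<F_i,G\>_\sigma \le 2\|F_i\|_{L^2(\mu_\sigma)}\|\Pi_i G\|_{L^2(\mu_\sigma)}$, where $\Pi_i$ projects onto mean-zero functions on $B_i$, and then applies the spectral gap inequality $\|\Pi_i G\|_{L^2}^2 \le \frac{C\ell_i^\alpha}{n^\alpha} \cdot n^\alpha D_\sigma(G)$ to absorb the Dirichlet penalty. Optimizing the resulting quadratic in $\|\Pi_i G\|_{L^2}$ produces exactly the claimed bound, with $\kappa$ absorbing the spectral gap constant. The delicate point I expect to grapple with is making the block-decoupling rigorous — ensuring that the bonds crossing between adjacent blocks do not spoil the superadditivity and that the spectral gap scaling $\ell_i^{-\alpha}$ (rather than $\ell_i^{-2}$ as in the finite-range case) is correctly invoked from the appendix; this long-range gap scaling is precisely the feature that distinguishes the present setting from the nearest-neighbor case of \cite{GonJar1}.
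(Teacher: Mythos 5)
Your plan coincides with the paper's intended proof: the paper in fact omits the argument, citing Proposition 7 of \cite{GonJar1}, and that argument is exactly your scheme --- superadditivity of the variational formula after discarding the bonds crossing between blocks, reduction of each pairing $\langle F_i,G\rangle_\sigma$ to the conditional expectation of $G$ given the configuration on $B_i$ centered at its canonical mean (this is where the hypothesis $\int F_i\,d\mu_\sigma=0$ for \emph{all} $\sigma$ enters, since it forces $F_i$ to have mean zero under every canonical measure on the block, which is what the spectral gap requires), followed by the $\ell_i^\alpha$ gap of Corollary \ref{c1.3.2}, Jensen's inequality for conditional expectations, and optimization of the resulting quadratic. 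Two small points to fix in the write-up: in the identity for $\langle G,-LG\rangle_\sigma$ the factor $\eta(x)(1-\eta(y))$ must remain inside the expectation (no $\sigma(1-\sigma)$ can be factored out, since $G$ is correlated with the occupation variables), and the spectral gap must be invoked with the block-restricted Dirichlet form $D_i(G)$ rather than the full $D_\sigma(G)$ appearing in your displayed inequality, as otherwise the block-by-block optimization would count the Dirichlet penalty $m$ times and the superadditivity step would fail.
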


When $p(\cdot)$ is the jump rate of a simple random walk, that is $\alpha =2$, this proposition is exactly Proposition 7 in \cite{GonJar1}. In our case, the proof is practically identical to the proof of that proposition, therefore we omitted it. Combining Propositions \ref{p2.3.1} and \ref{p2.3.2} we obtain the following inequality:
\begin{proposition}
\label{p2.3.4} Let $\{F_1,\dots,F_m\}$ be as in Proposition \ref{p2.3.2}. Then, for $F= F_1+\dots F_m$,
\[
\bb E_n \Big[ \Big( \sup_{0 \leq t \leq T} \int_0^t F(s,\eta^n_s) ds \Big)^2 \Big] \leq 14 \kappa \int_0^T \sum_{i=1}^m \frac{\ell_i^\alpha}{n^\alpha} \int F_i^2(s,\eta) d\mu_\sigma\,ds .
\]
\end{proposition}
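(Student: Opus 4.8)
The plan is simply to chain the two preceding propositions: the Kipnis--Varadhan inequality of Proposition \ref{p2.3.1} controls the time-integrated additive functional by a time integral of $\|\cdot\|_{-1}^2$ norms, and Proposition \ref{p2.3.2} estimates each such norm at a frozen time.

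First I would apply Proposition \ref{p2.3.1} to the time-dependent function $F(s,\eta) = F_1(s,\eta) + \dots + F_m(s,\eta)$. Since $\eta_0^n$ has distribution $\mu_\sigma$, this immediately yields
\[
\bb E_n \Big[ \Big( \sup_{0 \leq t \leq T} \int_0^t F(s,\eta^n_s) \, ds \Big)^2 \Big] \leq 14 \int_0^T \big\| F(t,\cdot) \big\|_{-1}^2 \, dt.
\]
Next, for each fixed $t \in [0,T]$ the function $F(t,\cdot) = F_1(t,\cdot) + \dots + F_m(t,\cdot)$ is a sum of local functions whose supports obey the nesting $\supp(F_i(t,\cdot)) \subseteq \{k_{i-1}+1,\dots,k_i\}$ and which satisfy $\int F_i(t,\cdot) \, d\mu_\sigma = 0$ for every $\sigma \in [0,1]$. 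These are precisely the hypotheses of Proposition \ref{p2.3.2}, which I would apply time-slice by time-slice to obtain, for each $t$,
\[
\big\| F(t,\cdot) \big\|_{-1}^2 \leq \kappa \sum_{i=1}^m \frac{\ell_i^\alpha}{n^\alpha} \int F_i^2(t,\eta) \, d\mu_\sigma,
\]
where $\ell_i = k_i - k_{i-1}$ is independent of $t$. Substituting this bound into the time integral above and collecting the constants $14$ and $\kappa$ gives exactly the claimed estimate.

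There is essentially no analytic obstacle: the entire content is carried by Propositions \ref{p2.3.1} and \ref{p2.3.2}, and the result is their direct composition. The only point deserving a word of care is that Proposition \ref{p2.3.2} is stated for time-independent functions, so I would make explicit that its hypotheses (the nested supports, the values $\ell_i$, and the vanishing of $\int F_i \, d\mu_\sigma$) are imposed on the family $\{F_i(t,\cdot)\}$ uniformly in $t$, and that the spatial norm $\|F(t,\cdot)\|_{-1}$ is evaluated at frozen time before integrating in $t$. The measurability in $t$ needed for the time integral to be meaningful follows from the measurability of $t \mapsto F(t,\cdot)$ already presupposed in the statement of Proposition \ref{p2.3.1}.
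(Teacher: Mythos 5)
Your proposal is correct and is precisely the paper's own argument: the paper introduces Proposition \ref{p2.3.4} with the single line ``Combining Propositions \ref{p2.3.1} and \ref{p2.3.2} we obtain the following inequality,'' i.e.\ exactly the composition you carry out, applying the Kipnis--Varadhan bound first and then the $\|\cdot\|_{-1}^2$ estimate at each frozen time. Your added remark on applying Proposition \ref{p2.3.2} time-slice by time-slice with hypotheses uniform in $t$ is a correct and welcome explicitation of a point the paper leaves implicit.
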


In what follows, we will use repeatedly Proposition \ref{p2.3.4}; and we note that Propositions \ref{p2.3.1} and \ref{p2.3.2} are needed only to prove this proposition.

Recall that up to now, the idea was to reduce $A_t^n(f)$ to a sum of variables with the smallest possible support. The point is that the smaller the support of the functions involved in, the more effective Proposition \ref{p2.3.4} is. Proposition \ref{p2.3.4} does not apply directly to \eqref{ec3.2.2_new_new} for two reasons. First, the supports of the functions $\bar{\eta}^n(x) \bar{\eta}^n(y)$ are intertwined. This problem can be solved dividing the sum in \eqref{ec3.2.2_new_new} into $K$ sums of functions with disjoint supports. And second, the functions $\bar{\eta}^n(x) \bar{\eta}^n(y)$ do not have mean zero for all invariant measures $\mu_\sigma$. A strategy to solve the second problem is to add and subtract the function $\psi_x^{K}(\eta)$, where for any $2\leq \ell  \in \bb N$, any $x \in \bb Z$, any $\eta \in \Omega$ and for  $\rho\in(0,1)$ we have  $\psi_x^\ell: \Omega \to \bb R$ given by
\[
\psi_x^\ell(\eta) = E \big[ \bar{\eta}(x) \bar{\eta}(x+1) \big| \eta^\ell(x)\big],
\]
where the conditional expectation is taken with respect to the measure $\mu_\rho$ and 
\[
\eta^\ell(x) = \frac{1}{\ell} \sum_{i=0}^{\ell-1} \eta(x+i).
\]
Do not confuse the {\em function} $\eta^\ell(x): \Omega \to \bb R$ with the {\em process} $\eta_t^n(x)$. We will not use both notations together; the risk of confusion will be minimal. An explicit computation shows that
\begin{equation}
\label{ee}
\psi_x^\ell(\eta) = \frac{\ell}{\ell-1} \Big\{ \big(\bar{\eta}^\ell(x)\big)^2 - \frac{\rho(1-\rho)}{\ell} \Big\} + \frac{2\rho-1}{\ell-1} \bar{\eta}^\ell(x),
\end{equation}
and in particular
\begin{equation}
\label{eedes}
\begin{gathered}
\int \psi_x^\ell(\eta)^2 d\mu_\rho \leq \frac{c(\rho)}{\ell^2},\\
\int \Big( \psi_x^\ell(\eta) - \big(\eta^\ell(x) -\rho\big)^2 +\frac{\rho(1-\rho)}{\ell}\Big)^2 d\mu_\rho \leq \frac{c(\rho)}{\ell^3}.
\end{gathered}
\end{equation}
Now, define $\mc Z_j = \{K z+j; z \in \bb Z\}$. Then, by the inequality $(x+y)^2\leq 2x^2+2y^2$,   \eqref{ec3.2.2_new_new} is bounded by
\begin{multline}
\label{ec3.2.3}
2 \bb E_n \Big[ \Big( \int_0^t n^{\alpha -3/2} \!\!\sum_{\substack{x \in \mc Z_j \\ j=1,\dots,K}} \!\! \sum_{y=1}^{K-1} y a(y) \big\{ \bar{\eta}_s^n(x) \bar{\eta}_s^n(x+y) - \psi_x^{K} (\eta_s^n)\big\} f'\big(\tfrac{x}{n} \big) ds \Big)^2 \Big] \\
		+ 2\bb E_n \Big[ \Big( \int_0^t n^{\alpha-3/2} \sum_{\substack{x \in \mc Z_j\\j=1,\dots,K}} \sum_{y=1}^{K-1} ya(y) \psi_x^K(\eta_s^n) f'\big(\tfrac{x}{n} \big) ds\Big)^2 \Big].
\end{multline}
Now we use the two next lemmas whose proofs are given in Appendix \ref{sec_a5}.
\begin{lemma}
\label{lem_3} We have that
\begin{equation}
\label{ec3.2.2}
\bb E_n \Big[ \Big( \int_0^t n^{\alpha-3/2} \sum_{\substack{x \in \mc Z_j\\j=1,\dots,K}} \sum_{y=1}^{K-1} ya(y) \psi_x^K(\eta_s^n) f'\big(\tfrac{x}{n} \big) ds\Big)^2 \Big]\leq t^2\frac{n^{2\alpha-2}}{K},
\end{equation}
so that it vanishes, as $n\to\infty$, if $K\gg n^{2\alpha-2}.$
\end{lemma}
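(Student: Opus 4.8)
The plan is to establish \eqref{ec3.2.2} by a brute-force second-moment estimate, exploiting the smallness of $\psi_x^K$ in $L^2(\mu_\rho)$ together with the mean-zero property and stationarity; crucially, no Kipnis--Varadhan input (Proposition \ref{p2.3.4}) is needed for this term. First I would apply the Cauchy--Schwarz inequality in the time variable, exactly as in the reduction preceding Proposition \ref{p2.2.3}, to bound the left-hand side of \eqref{ec3.2.2} by
\[
t \int_0^t \bb E_n\Big[\Big( n^{\alpha-3/2} \!\!\!\sum_{\substack{x \in \mc Z_j \\ j=1,\dots,K}} \!\sum_{y=1}^{K-1} y a(y)\, \psi_x^K(\eta_s^n)\, f'\big(\tfrac{x}{n}\big)\Big)^2\Big]\, ds .
\]
Since $\{\eta_s^n\}$ is stationary with marginal $\mu_\rho$, the integrand does not depend on $s$, so it remains to estimate a single expectation under $\mu_\rho$, which produces the outer factor $t^2$.

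Next I would observe that $\psi_x^K$ depends only on $x$ and not on $y$, so the inner sum factors out: writing $b_K = \sum_{y=1}^{K-1} y a(y)$ and noting that the double sum over $x \in \mc Z_j$, $j=1,\dots,K$, runs over all of $\bb Z$, the quantity to estimate becomes $n^{2\alpha-3} b_K^2\, \bb E_{\mu_\rho}\big[\big(\sum_{x} \psi_x^K f'(x/n)\big)^2\big]$. Because $y a(y) = \tfrac{c^+-c^-}{2} y^{-\alpha}$ for $y>0$ and $\alpha>1$ in the regime of this section, the series $\sum_{y\geq 1} y^{-\alpha}$ converges, so $b_K$ is bounded uniformly in $K$.

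The heart of the argument is the expansion
\[
\bb E_{\mu_\rho}\Big[\Big(\sum_x \psi_x^K f'(\tfrac{x}{n})\Big)^2\Big] = \sum_{x,x'} f'(\tfrac{x}{n}) f'(\tfrac{x'}{n})\, \bb E_{\mu_\rho}\big[\psi_x^K \psi_{x'}^K\big].
\]
Here I would use that $\psi_x^K$ has zero mean under $\mu_\rho$ (as is clear from \eqref{ee}) and support $\{x,\dots,x+K-1\}$ of diameter less than $K$; under the product measure $\mu_\rho$ the covariance $\bb E_{\mu_\rho}[\psi_x^K \psi_{x'}^K]$ therefore vanishes whenever the supports are disjoint, i.e. whenever $|x-x'|\geq K$. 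For the at most $2K$ overlapping values of $x'$ per $x$, Cauchy--Schwarz together with the first bound in \eqref{eedes} gives $|\bb E_{\mu_\rho}[\psi_x^K \psi_{x'}^K]| \leq c(\rho) K^{-2}$. Summing, and using that $\sum_x |f'(x/n)| \leq C n$ (a Riemann sum for $\|f'\|_{L^1}$) while $|f'(x'/n)| \leq \|f'\|_\infty$, yields a bound of order $K^{-2}\cdot 2K \cdot n = n/K$. Multiplying through by $t^2 n^{2\alpha-3} b_K^2$ produces the claimed estimate $t^2 n^{2\alpha-2}/K$, after absorbing the $\rho$- and $f$-dependent constants.

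The proof presents no serious obstacle: it is precisely the brute-force Cauchy--Schwarz estimate alluded to in the introduction. The only point requiring care is the support/mean-zero cancellation that turns the naive $O(n^2)$ count of pairs $(x,x')$ into the $O(n/K)$ count responsible for the decisive gain of a factor $1/K$; this is exactly what makes the term negligible once $K \gg n^{2\alpha-2}$.
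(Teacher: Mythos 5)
Your proof is correct and takes essentially the same route as the paper's own argument in Appendix \ref{sec_a5}: a brute-force second-moment bound via Cauchy--Schwarz in time plus stationarity, the first inequality in \eqref{eedes} giving $\int (\psi_x^K)^2 d\mu_\rho \leq c(\rho)K^{-2}$, and convergence of $\sum_{y\geq 1} y\,a(y)$ for $\alpha>1$, with no Kipnis--Varadhan input. The only cosmetic difference is that you expand the covariance $\bb E_{\mu_\rho}[\psi_x^K \psi_{x'}^K]$ directly and use that it vanishes for $|x-x'|\geq K$ (disjoint supports, mean zero, product measure), whereas the paper applies Cauchy--Schwarz over the $K$ residue classes $\mc Z_j$ and independence within each class; both deliver the same $C(f,\rho)\,t^2 n^{2\alpha-2}/K$ bound, with the constant absorbed exactly as in the paper's own statement of the lemma.
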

\begin{lemma}
\label{lem_4} We have that
\begin{equation}
\label{ec3.2.2_new}
\lim_{n\to\infty}\bb E_n \Big[ \Big( \int_0^t n^{\alpha -3/2} \!\!\sum_{\substack{x \in \mc Z_j \\ j=1,\dots,K}} \!\! \sum_{y=1}^{K-1} y a(y) \big\{ \bar{\eta}_s^n(x) \bar{\eta}_s^n(x+y) - \psi_x^{K} (\eta_s^n)\big\} f'\big(\tfrac{x}{n} \big) ds \Big)^2 \Big]\leq t\frac{K^{1+\alpha}}{n^{2-\alpha}},
\end{equation}
which vanishes, as $n\to \infty$, if  
$K \ll n^{\frac{2-\alpha}{1+\alpha}}$.
\end{lemma}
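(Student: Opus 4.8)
The plan is to bound the left-hand side of \eqref{ec3.2.2_new} directly by the Kipnis--Varadhan-type estimate of Proposition \ref{p2.3.4}. Write $g_x(\eta) = \sum_{y=1}^{K-1} y\,a(y)\big\{\bar\eta(x)\bar\eta(x+y) - \psi_x^K(\eta)\big\}$, so that the functional inside the time integral is $n^{\alpha-3/2}\sum_{x} g_x(\eta)\,f'(x/n)$, the outer sum running over $\bb Z = \bigcup_{j=1}^K \mc Z_j$. Two preliminary facts make Proposition \ref{p2.3.4} applicable: the summands must have mean zero under every $\mu_\sigma$, and they must be regrouped into families of functions with ordered, disjoint supports.

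For the first fact, note that $g_x$ is supported on the block $\{x,\dots,x+K-1\}$, and that for each $1\le y\le K-1$ the conditional expectation $E_{\mu_\sigma}[\bar\eta(x)\bar\eta(x+y)\mid\eta^K(x)]$ is independent of $y$ by exchangeability of $\mu_\sigma$, while the law of $\mu_\sigma$ conditioned on the number of particles in the block is uniform and therefore independent of $\sigma$. Hence this conditional expectation equals $\psi_x^K(\eta)$ for every $\sigma$, which gives $\int\{\bar\eta(x)\bar\eta(x+y)-\psi_x^K\}\,d\mu_\sigma=0$ and so $\int g_x\,d\mu_\sigma=0$ for all $\sigma$; this is exactly the purpose of subtracting $\psi_x^K$. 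For the second fact, split the outer sum according to the residue classes $\mc Z_j$: inside a fixed $\mc Z_j$ the blocks $\{x,\dots,x+K-1\}$, $x\in\mc Z_j$, are disjoint and ordered since the spacing $K$ of $\mc Z_j$ equals the block length, so $\{g_x f'(x/n)\}_{x\in\mc Z_j}$ satisfies the hypotheses of Proposition \ref{p2.3.4} with every $\ell_i=K$ (after a routine truncation, as $f'\in\bb S(\bb R)$ decays rapidly).

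I would then proceed in two moves. By $\big(\sum_{j=1}^K b_j\big)^2\le K\sum_{j=1}^K b_j^2$, the left-hand side of \eqref{ec3.2.2_new} is at most $n^{2\alpha-3}\,K\sum_{j=1}^K \bb E_n\big[\big(\int_0^t\sum_{x\in\mc Z_j}g_x(\eta_s^n)f'(x/n)\,ds\big)^2\big]$. Since the integrand is time-independent, applying Proposition \ref{p2.3.4} on the interval $[0,t]$ bounds each $j$-term by $14\kappa\,t\,\tfrac{K^\alpha}{n^\alpha}\sum_{x\in\mc Z_j}|f'(x/n)|^2\int g_x^2\,d\mu_\rho$. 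The two static ingredients are then $\int g_x^2\,d\mu_\rho=O(1)$ and the Riemann-sum estimate $\sum_{x\in\mc Z_j}|f'(x/n)|^2\approx\tfrac nK\|f'\|^2$, valid because $\mc Z_j$ has spacing $K=o(n)$. The first follows by expanding the square and using $\int\bar\eta(x)\bar\eta(x+y)\bar\eta(x)\bar\eta(x+y')\,d\mu_\rho=(\rho(1-\rho))^2\mathbf{1}(y=y')$ together with $\int\bar\eta(x)\bar\eta(x+y)\psi_x^K\,d\mu_\rho=\int(\psi_x^K)^2\,d\mu_\rho$; these reduce the computation to $(\rho(1-\rho))^2\sum_{y\ge1}y^2a(y)^2$ minus a nonnegative term, and $\sum_{y\ge1}y^2a(y)^2\sim\sum_y y^{-2\alpha}$ converges for $\alpha>\tfrac12$.

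Collecting the powers of $K$ and $n$ yields
\[
n^{2\alpha-3}\cdot K\cdot K\cdot t\,\frac{K^\alpha}{n^\alpha}\cdot\frac nK\,\|f'\|^2 = t\,\frac{K^{1+\alpha}}{n^{2-\alpha}}\,\|f'\|^2,
\]
which is the asserted bound and tends to $0$ precisely when $K\ll n^{(2-\alpha)/(1+\alpha)}$. The main obstacle here is conceptual rather than computational: one must realize that subtracting $\psi_x^K$ simultaneously restores the mean-zero property under all $\mu_\sigma$ (required by Proposition \ref{p2.3.2}) and keeps $\int g_x^2\,d\mu_\rho$ bounded, and that splitting into the $K$ residue classes $\mc Z_j$---which costs one factor $K$ in the Cauchy--Schwarz step and another in the number of classes---is exactly what produces the growth $K^{1+\alpha}$, encoding the trade-off between the size of the support and the intensity of the long jumps that underlies the whole multiscale analysis.
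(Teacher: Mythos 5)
Your proof is correct and takes essentially the same route as the paper's own argument in Appendix \ref{sec_a5}: decompose into the $K$ residue classes $\mc Z_j$, apply Proposition \ref{p2.3.4} to each class with all block lengths $\ell_i = K$, and combine the spectral-gap factor $K^\alpha/n^\alpha$ with the static bound $\int g_x^2\, d\mu_\rho \leq c(\rho) \sum_{y\geq 1} y^2 a(y)^2 < \infty$ to arrive at $C(f,\rho)\, t\, K^{1+\alpha}/n^{2-\alpha}$. Your explicit verifications of the mean-zero hypothesis under every $\mu_\sigma$ (via the $\sigma$-independence of the conditional law given $\eta^K(x)$) and of the cross-term cancellation in $\int g_x^2\, d\mu_\rho$ simply fill in steps the paper leaves implicit.
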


At this point we collect the estimates we have so far on $K$:  $$K \gg n^{\frac{2\alpha-2}{2\alpha-1}},\quad  K \gg n^{2\alpha-2} \quad \textrm{and}\quad K \ll n^{\frac{2-\alpha}{1+\alpha}}.$$ Since $\alpha>1$ we are therefore reduced to $n^{2\alpha-2}\ll K \ll n^{\frac{2-\alpha}{1+\alpha}}$.
Now, if $\alpha < 1+ \frac{2}{5+\sqrt{33}}$, there exists $\gamma$ such that
\[
2\alpha -2 < \gamma < \frac{2-\alpha}{1+\alpha}.
\]
Note that the value $1+ \frac{2}{5+\sqrt{33}}$, comes from the fact that we need to have $\alpha>1$ such that $2\alpha -2 <  \frac{2-\alpha}{1+\alpha}.$
Finally, looking at the bounds obtained in  \eqref{ec3.2.2} and \eqref{ec3.2.2_new}, by taking  $K= n^{\frac{\alpha}{\alpha+2}}$ and replacing this in both \eqref{ec3.2.2} and \eqref{ec3.2.2_new}, we conclude that the variance of \eqref{ec3.2.2} is bounded by
\[
C(f,\rho) tn^{\theta},
\]
where $\theta = \frac{2\alpha^2+\alpha-4}{2+\alpha}$. Note that $\theta < 0 $ if $\alpha < 1+ \frac{2}{5+\sqrt{ 33}}$.  Therefore \eqref{ec3.2.2} vanishes in $\bb{L}^2(\bb P_n)$, as $n\to\infty$. However, this is not enough to prove tightness, since in order to apply Proposition \ref{KCC} we need an exponent bigger than one on $t$. For that purpose, we apply the Cauchy-Schwarz inequality and we perform similar computations to those of Appendix \ref{sec_a2} to obtain a rough bound for the variance of \eqref{ec3.2.2} given by 
\[
C(f,\rho) t^2 n^{2\alpha-2}.
\]
Now, the following lemma will be useful.
\begin{lemma}
\label{lvar}
For any $a, b>0$, there exist $C, \delta, \varepsilon>0$ such that
\[
\min\Big\{\frac{t}{n^a}, t^2 n^b\Big\} \leq \frac{C t^{1+\delta}}{n^\varepsilon}.
\]
\end{lemma}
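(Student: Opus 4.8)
The plan is to reduce everything to the elementary interpolation bound $\min\{A,B\} \leq A^\lambda B^{1-\lambda}$, valid for any $A,B \geq 0$ and any $\lambda \in [0,1]$. This is immediate: if $A \leq B$ then $\min\{A,B\} = A = A^\lambda A^{1-\lambda} \leq A^\lambda B^{1-\lambda}$ because $1-\lambda \geq 0$, and the symmetric argument handles the case $B \leq A$. The point of invoking it here is that $t^2 n^b$ supplies the extra power of $t$ we need, while $t/n^a$ supplies the negative power of $n$, and the weighted geometric mean trades one against the other.

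Concretely, I would apply this with $A = t n^{-a}$ and $B = t^2 n^b$, which gives
\[
\min\Big\{\frac{t}{n^a}, t^2 n^b\Big\} \leq \big( t n^{-a}\big)^\lambda \big( t^2 n^b\big)^{1-\lambda} = t^{\,2-\lambda}\, n^{\,b-(a+b)\lambda}.
\]
It then remains only to choose $\lambda \in [0,1]$ so that the exponent of $t$ strictly exceeds $1$ and the exponent of $n$ is strictly negative, i.e. $2-\lambda > 1$ and $b-(a+b)\lambda < 0$, which is equivalent to $\frac{b}{a+b} < \lambda < 1$. Since $a>0$ we have $\frac{b}{a+b} < 1$, so this open interval is nonempty and any $\lambda$ inside it works; for definiteness one may take $\lambda = \frac{a+2b}{2(a+b)}$. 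Setting $\delta = 1-\lambda$, $\varepsilon = (a+b)\lambda - b$ and $C = 1$ then produces $\min\{t n^{-a}, t^2 n^b\} \leq C\, t^{1+\delta} n^{-\varepsilon}$ with $\delta,\varepsilon > 0$, which is the claim.

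Since the whole proof is a one-line interpolation followed by a count of exponents, there is no substantive obstacle; the only thing to check is the (trivial) nonemptiness of the admissible range for $\lambda$, which is exactly what the hypotheses $a,b>0$ guarantee. I would also remark that no bound on $t$ (such as $t \leq T$) enters anywhere, since the interpolation inequality holds for all nonnegative arguments, so the estimate is in fact valid for every $t \geq 0$ and every $n \in \bb N$.
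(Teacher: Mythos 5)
Your proof is correct: the interpolation bound $\min\{A,B\}\leq A^{\lambda}B^{1-\lambda}$ applied with $A=t n^{-a}$, $B=t^{2}n^{b}$ and any $\lambda\in\big(\tfrac{b}{a+b},1\big)$ (your choice $\lambda=\tfrac{a+2b}{2(a+b)}$ yields $\delta=\tfrac{a}{2(a+b)}$, $\varepsilon=\tfrac{a}{2}$, $C=1$) gives exactly the claimed estimate, and your observation that no restriction $t\leq T$ is needed is accurate. The paper omits the proof as ``elementary,'' so there is no argument to compare against; yours is precisely the kind of one-line weighted geometric-mean computation the authors had in mind, with the nonemptiness of the admissible range for $\lambda$ correctly traced to the hypothesis $a>0$.
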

The proof is elementary and for that reason we omit it. Using this lemma we conclude that, for $1 < \alpha < 1+ \frac{2}{5+\sqrt{33}}$, the variance of \eqref{ec3.2.2} is bounded by $Ct^{1+\delta}{n^{-\varepsilon}}$. 

By Proposition  \ref{p2.2.3} we conclude two things. First that $\{\mc A_t^n(f); t \in [0,T]\}_{n \in \bb N}$ is tight and second that any of its limit points are identically null. In order to cross the barrier $\alpha < 1+\frac{2}{5+\sqrt{33}}$, we will need to perform a multiscale analysis, which is the content of the next section.

\subsubsection{{Tightness of $\{A_t^n(f); t \in [0,T]\}_{n \in \bb N}$:  \bf{the case $1+\frac{2}{5+\sqrt{33}} \leq \alpha <\frac{3}{2}$}}}
\label{s3.3}

\quad

\quad

From Lemma \ref{lem_1} we could reduce the sum defining $A_t^n(f)$ to a sum over $|y-x| \leq K$, where $K \gg n^{\frac{2\alpha-2}{2\alpha-1}}$. Then, in Lemma \ref{lem_4} we observed that the first term in \eqref{ec3.2.3} converges to $0$ if $K \ll n^{\frac{2-\alpha}{1+\alpha}}$. Now, we need to see what can we say about the first term in \eqref{ec3.2.3} when we have  the sum  $L\leq |y-x|\leq K$, where: $$L \gg n^{\frac{2\alpha-2}{2\alpha-1}}\quad \textrm{and \quad} K \ll n^{\frac{2-\alpha}{1+\alpha}}.$$ In order to do that, let $L<K$ be given, and note that by repeating the arguments of the proof of Lemma \ref{lem_4} we have that 
\begin{equation}
\label{est1}
\begin{split}
 \bb E_n\Big[\Big(\int_0^t n^{\alpha-3/2} &\sum_{\substack{x \in \mc Z_j \\ j=1,\dots,K}} \sum_{y=L}^{K-1} y a(y) \Big(\bar{\eta}_s^n(x) \bar{\eta}_s^n(x+y) - \psi_x^K(\eta_s^n)\Big) f' \big( \tfrac{x}{n}\big) ds\Big)^2\Big]\\&
 \leq c_5(\rho)tK\frac{K^\alpha }{n^\alpha}  n^{2\alpha-3} \sum_x f' \big( \tfrac{x}{n} \big)^2 \sum_{y=L}^{K-1} \frac{1}{y^{2\alpha}}
	\leq \frac{C(f,\rho) t K^{1+\alpha}}{n^{2-\alpha} L^{2\alpha-1}}.
\end{split}
\end{equation}
If we take $L = n^\gamma$ and $K = n^{\gamma'}$, this last quantity goes to $0$ as soon as
\[
\gamma' < \frac{2-\alpha}{1+\alpha} + \frac{2\alpha-1}{1+\alpha} \gamma.
\]
Now, we note that if we plug this estimate into \eqref{est1} we obtain a bound on $t$ with exponent one, which again is not enough to show tightness as an application of Proposition \ref{KCC}. To solve this problem,  we take $\delta>0$ such that \[
\delta + \gamma' (1+\alpha)\leq 2-\alpha + \gamma (2\alpha-1)
\] 
and now  we get the bound $tn^{-\delta}$ for \eqref{est1}, which vanishes as $n\to\infty$ but still does suit out purposes. Nevertheless, if we think about this inequality as a recurrence, we see that it has an attractive fixed point at $\gamma=1$. Therefore, we have the following
\begin{lemma}
\label{l3.3.1}
For any $0<\delta <2\alpha$ there exists a finite sequence $\{\gamma_0,\gamma_1,\dots,\gamma_\ell\}$ such that $\gamma_0=0$, $\gamma_\ell <1-\delta$ and
\[
\frac{\delta}{1+\alpha} + \gamma_i \leq \frac{2-\alpha}{1+\alpha} + \gamma_{i-1} \frac{2\alpha-1}{1+\alpha}
\]
for any $i=1,\dots,\ell$.
\end{lemma}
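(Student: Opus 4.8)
The plan is to read the displayed inequality as an affine recurrence for the exponents and to saturate it. Given $\gamma_{i-1}$, the most ambitious admissible choice of $\gamma_i$ is the one making the inequality an equality, so I would introduce the affine map
\[
\Phi(\gamma) = \frac{2-\alpha-\delta}{1+\alpha} + \frac{2\alpha-1}{1+\alpha}\,\gamma,
\]
put $\gamma_0 = 0$, and set $\gamma_i = \Phi(\gamma_{i-1})$ for $i \geq 1$. With this choice the inequality of the statement holds with equality at every step, hence a fortiori holds, and the whole problem collapses to the elementary dynamics of iterating $\Phi$ from $0$.

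Next I would analyze that dynamics. The slope of $\Phi$ is $a = \frac{2\alpha-1}{1+\alpha}$, and since $1 < \alpha < 2$ one has $0 < a < 1$, so $\Phi$ is a contraction with unique fixed point
\[
\gamma^\ast = \frac{b}{1-a} = 1 - \frac{\delta}{2-\alpha}, \qquad b = \frac{2-\alpha-\delta}{1+\alpha}.
\]
Solving the recurrence explicitly gives $\gamma_i = \gamma^\ast(1-a^i)$, so the $\gamma_i$ converge monotonically to $\gamma^\ast$ as $i \to \infty$ (strictly increasing precisely in the relevant small-$\delta$ regime $\delta < 2-\alpha$, where $\gamma^\ast>0$). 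The decisive computation is the comparison of $\gamma^\ast$ with the target $1-\delta$:
\[
(1-\delta)-\gamma^\ast = \frac{\delta}{2-\alpha}-\delta = \delta\,\frac{\alpha-1}{2-\alpha} > 0,
\]
the positivity using precisely $\alpha > 1$ (together with $\alpha < 2$). Thus $\gamma^\ast < 1-\delta$.

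To conclude I would simply truncate: every iterate satisfies $\gamma_i < \gamma^\ast < 1-\delta$, and because $\gamma_i \uparrow \gamma^\ast$ with $a^i \to 0$ geometrically, I can choose $\ell$ finite with $\gamma_\ell$ as close to $\gamma^\ast$ as desired while still having $\gamma_\ell < 1-\delta$. The sequence $\{\gamma_0,\dots,\gamma_\ell\}$ then meets all three requirements. I do not expect a genuine obstacle here; the only point worth flagging is conceptual rather than technical, namely that the strict inequality $\gamma_\ell < 1-\delta$ is not something one has to engineer but is forced by $\alpha>1$ pushing the fixed point strictly below $1-\delta$. In particular one can never reach $\gamma_\ell \geq 1-\delta$ by this scheme, consistent with the multiscale accumulating at scale $n^{\gamma^\ast} = n^{1-\delta/(2-\alpha)}$, just short of the announced stopping scale $n^{1-\delta}$.
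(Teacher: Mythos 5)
Your proof is correct and is essentially the paper's own argument made precise: the paper gives no written proof of Lemma \ref{l3.3.1}, offering only the remark that the recurrence has an attractive fixed point (at $\gamma=1$ when $\delta=0$), and your saturated iteration of the affine map, with fixed point $\gamma^\ast = 1-\frac{\delta}{2-\alpha}$ satisfying $(1-\delta)-\gamma^\ast = \delta\frac{\alpha-1}{2-\alpha}>0$ thanks to $\alpha>1$, is exactly that observation carried out. One pedantic caveat: for $\delta \geq 2-\alpha$ --- permitted by the stated range $0<\delta<2\alpha$, though irrelevant to the application, where the paper fixes $\delta<\frac{1}{2\alpha-1}<2-\alpha$ --- one has $\gamma^\ast \leq 0$ and your claim that every iterate satisfies $\gamma_i<\gamma^\ast$ reverses ($\gamma_0=0>\gamma^\ast$); the conclusion nevertheless survives, since your explicit formula $\gamma_i=\gamma^\ast(1-a^i)$ shows $\gamma_i$ converges monotonically from above to $\gamma^\ast<1-\delta$, so a finite $\ell$ with $\gamma_\ell<1-\delta$ still exists.
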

The multiscale analysis of $A_t^n(f)$ goes by fixing $0<\delta<\frac{1}{2\alpha-1}$ and  defining the scales $K^i = K_n^i = n^{\gamma_i}$ for $i=1,\dots,\ell$, where the sequence $\{\gamma_0,\gamma_1,\dots,\gamma_\ell\}$ is given by Lemma \ref{l3.3.1}. Choosing $L = K_n^i$ and $K = K_n^{i+1}$ and plugging this into \eqref{est1}, we see that the expectation in  \eqref{est1} is bounded by $C(f,\rho) t n^{-\delta}$. Now, let us write $\Psi_x^i = \psi_x^{K^i}$. In this case we have the following result whose proof is given in Appendix \ref{sec_a6}.

\begin{lemma}
\label{lem_5} We have that
\begin{equation}
\label{ec3.2.2_new_new_new}
\bb E_n \Big[ \Big( \int_0^t \!\!\!n^{\alpha-3/2} \sum_x \sum_{i=1}^\ell \sum_{y = K^{i-1}}^{K^i-1} \!\!\!\!\!y a(y) \Big(\bar{\eta}_s^n(x) \bar{\eta}_s^n(x+y)-\Psi_x^i(\eta_s^n)\Big) f' \big(\tfrac{x}{n}\big) ds \Big)^2 \Big]\leq C(f,\rho) \frac{t \ell^2}{n^\delta}.
\end{equation}
\end{lemma}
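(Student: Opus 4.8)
The plan is to treat the triple sum as a sum, over the $\ell$ scales, of single--scale additive functionals, and then to combine the triangle inequality in $L^2(\bb P_n)$ with the per--scale estimate already recorded in \eqref{est1}. For $i \in \{1,\dots,\ell\}$ I would write
\[
B_t^{n,i}(f) = \int_0^t n^{\alpha-3/2} \sum_x \sum_{y=K^{i-1}}^{K^i-1} y\, a(y) \big( \bar\eta_s^n(x)\bar\eta_s^n(x+y) - \Psi_x^i(\eta_s^n)\big) f'\big(\tfrac xn\big)\, ds,
\]
so that the quantity inside the expectation in \eqref{ec3.2.2_new_new_new} is exactly $\sum_{i=1}^\ell B_t^{n,i}(f)$. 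Since $\Psi_x^i = \psi_x^{K^i}$ is precisely the conditional expectation appearing in \eqref{est1}, and the inner sum runs over $K^{i-1} \le y < K^i$, the functional $B_t^{n,i}(f)$ is exactly the object controlled by the right--hand side of \eqref{est1} with the choice $L = K^{i-1}$ and $K = K^i$.

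Next I would bound each scale separately. Setting $L = n^{\gamma_{i-1}}$ and $K = n^{\gamma_i}$ in \eqref{est1} gives
\[
\bb E_n\big[ B_t^{n,i}(f)^2 \big] \le C(f,\rho)\, t\, n^{\,\gamma_i(1+\alpha) - (2-\alpha) - \gamma_{i-1}(2\alpha-1)}.
\]
Multiplying the recurrence inequality of Lemma \ref{l3.3.1} by $(1+\alpha)$ yields $\delta + \gamma_i(1+\alpha) \le (2-\alpha) + \gamma_{i-1}(2\alpha-1)$, i.e. the exponent above is at most $-\delta$. Hence $\bb E_n[B_t^{n,i}(f)^2] \le C(f,\rho)\, t\, n^{-\delta}$ for every $i$, with a constant that may be taken uniform in $i$.

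Finally I would reassemble by Minkowski's inequality in $L^2(\bb P_n)$. Writing $\|\cdot\|_2$ for the $L^2(\bb P_n)$--norm,
\[
\Big\| \sum_{i=1}^\ell B_t^{n,i}(f) \Big\|_2 \le \sum_{i=1}^\ell \big\| B_t^{n,i}(f)\big\|_2 \le \ell \cdot \big( C(f,\rho)\, t\, n^{-\delta}\big)^{1/2},
\]
and squaring produces exactly $C(f,\rho)\, t\, \ell^2 n^{-\delta}$, which is the asserted bound.

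I do not expect a genuinely hard step here: the analytic effort has already been spent in establishing \eqref{est1} (through Proposition \ref{p2.3.4}) and in constructing the scales $\{\gamma_i\}$ in Lemma \ref{l3.3.1}. The two points demanding care are (i) verifying that the subtracted term $\Psi_x^i$ is the conditional expectation at the correct scale $K^i$, so that \eqref{est1} applies verbatim scale by scale, and (ii) tracking that the quadratic factor $\ell^2$ — rather than $\ell$ — is the price of the crude triangle inequality across scales. This loss is harmless in the present regime, since $n^{-\delta}$ decays while $\ell$ stays bounded independently of $n$, but it is the reason the $\ell^2$ (and not a smaller power) appears in the statement.
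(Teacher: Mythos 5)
Your proposal is correct and is essentially the paper's own proof: the argument in Appendix \ref{sec_a6} likewise applies Minkowski's inequality in $L^2(\bb P_n)$ across the $\ell$ scales, bounds each single-scale functional by \eqref{est1} with $L=n^{\gamma_{i-1}}$ and $K=n^{\gamma_i}$, and invokes the recurrence of Lemma \ref{l3.3.1} to reduce every exponent to $-\delta$, yielding $C(f,\rho)\,t\,\ell^2 n^{-\delta}$. Your side remarks --- that $\Psi_x^i=\psi_x^{K^i}$ matches the conditional expectation at the correct scale and that the factor $\ell^2$ is the crude but harmless price of the triangle inequality (since $\ell$ is fixed independently of $n$) --- are also accurate.
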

Note that  the time integral in the previous expression is equal to 
\begin{equation*}\label{eq4}
\begin{split}
\int_0^t \!\!\!n^{\alpha-3/2} \sum_{x} \sum_{y=1}^{K^\ell-1} y a(y) \bar{\eta}_s^n(x) \bar{\eta}_s^n(x+y) f'\big(\tfrac{x}{n}\big) ds   - \sum_{i=1}^\ell  \int_0^t \!\!\!n^{\alpha-3/2} \sum_x M^i \Psi_x^i(\eta_s^n) f'\big(\tfrac{x}{n}\big) ds,
\end{split}
\end{equation*}
 where $M^i = \!\!\sum_{y=K^{i-1}}^{K^i-1}\!\! y a(y).$

Now we have to bound the variance of 
\begin{equation}\label{eq5}
\begin{split}
&\sum_{i=1}^\ell   \int_0^t n^{\alpha-3/2} \sum_x M^i \Psi_x^i(\eta_s^n) f'\big(\tfrac{x}{n}\big) ds-\int_0^t n^{\alpha-3/2} \sum_{x}\sum_{y=1}^{K^\ell-1} y a(y) \Psi_x^\ell(\eta_s^n) f' \big( \tfrac{x}{n}\big) ds\\
=&\int_0^t n^{\alpha-3/2} \sum_x \sum_{i=1}^\ell M^i \Psi_x^i(\eta_s^n) f'\big(\tfrac{x}{n}\big) ds - \int_0^t n^{\alpha-3/2} \sum_x \sum_{i=1}^\ell M^i\Psi^\ell_x(\eta_s^n) f'\big(\tfrac{x}{n}\big) ds\\
=&\int_0^t n^{\alpha-3/2} \sum_x \sum_{i=1}^\ell M^i\Big( \Psi_x^i(\eta_s^n) -\Psi^\ell_x(\eta_s^n)\Big) f'\big(\tfrac{x}{n}\big) ds\\
=&\int_0^t n^{\alpha-3/2} \sum_x \sum_{i=1}^\ell M^i \sum_{j=i}^{\ell-1}\Big(\Psi_x^j(\eta_s^n) -\Psi^{j+1}_x(\eta_s^n)\Big) f'\big(\tfrac{x}{n}\big) ds.
\end{split}
\end{equation}
Above we used the fact that $\sum_{y=1}^{K^\ell-1} y a(y)=\sum_{i=1}^{\ell} M^i$ which  is bounded by the mean $m = \sum_{y >0} y a(y),$ which is finite since $\alpha>1$. In order to estimate the variance of last term we use first the next result whose proof is given  Appendix \ref{sec_a8}.

\begin{lemma}\label{lem_multi_scale_1}
We have that 
\begin{equation}\label{multiscaleCPAM}
\bb{E}_n\Big[\Big(\int_0^t n^{\alpha-3/2} \sum_{x} \big( \Psi_x^j(\eta_s^n) - \Psi_x^{j+1}(\eta_s^n)\big) f' \big(\tfrac{x}{n} \big) ds\Big)^2\Big]\leq 
C(f,\rho)t\frac{(K^{j+1})^{\alpha-1}}{n^{2-\alpha}}.
\end{equation}
\end{lemma}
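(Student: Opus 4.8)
The plan is to interpolate between the two scales $K^j$ and $K^{j+1}$ by a dyadic cascade of intermediate scales, applying Proposition \ref{p2.3.4} once at each step of the cascade. Set $\ell_r = \min\{2^r K^j, K^{j+1}\}$ for $r=0,1,\dots,R$ with $R=\lceil \log_2(K^{j+1}/K^j)\rceil$, so that $\ell_0=K^j$, $\ell_R=K^{j+1}$ and $1\le \ell_{r+1}/\ell_r\le 2$. Since $\Psi_x^j-\Psi_x^{j+1}=\psi_x^{K^j}-\psi_x^{K^{j+1}}$ telescopes into $\sum_{r=0}^{R-1}(\psi_x^{\ell_r}-\psi_x^{\ell_{r+1}})$, everything reduces to controlling single dyadic increments. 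The key observation, which I would read off from \eqref{ee} by computing the first two moments of $\bar\eta^\ell(x)$ under $\mu_\sigma$, is that $\int \psi_x^\ell\, d\mu_\sigma = (\sigma-\rho)^2$ for every $\ell$; the value is independent of the scale. Consequently each increment $\psi_x^{\ell_r}-\psi_x^{\ell_{r+1}}$ has zero $\mu_\sigma$-mean for all $\sigma\in[0,1]$, which is precisely the hypothesis required to feed it into Proposition \ref{p2.3.4}. Moreover $\psi_x^{\ell_r}-\psi_x^{\ell_{r+1}}$ is local, supported on $\ell_{r+1}$ consecutive sites, and by \eqref{eedes} together with $(a+b)^2\le 2a^2+2b^2$ its $L^2(\mu_\rho)$-norm satisfies $\int (\psi_x^{\ell_r}-\psi_x^{\ell_{r+1}})^2\,d\mu_\rho\le C(\rho)/\ell_r^2$.

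For the per-step estimate, fix $r$ and let $F^{(r)}=n^{\alpha-3/2}\sum_x(\psi_x^{\ell_r}-\psi_x^{\ell_{r+1}})f'(\tfrac{x}{n})$. Splitting the sum over $x$ into the $\ell_{r+1}$ residue classes modulo $\ell_{r+1}$ renders the supports inside each class disjoint and consecutive, so Proposition \ref{p2.3.4} applies with $\ell_i=\ell_{r+1}$; performing a Cauchy--Schwarz over the $\ell_{r+1}$ classes and using $\sum_x f'(\tfrac{x}{n})^2\le Cn\|f'\|^2$ yields
\[
\bb E_n\Big[\Big(\int_0^t F^{(r)}\,ds\Big)^2\Big]\le C(f,\rho)\,t\,\ell_{r+1}\cdot\frac{\ell_{r+1}^\alpha}{n^\alpha}\cdot n^{2\alpha-3}\cdot\frac{n}{\ell_r^2}=C(f,\rho)\,t\,\frac{\ell_{r+1}^{1+\alpha}}{\ell_r^2\,n^{2-\alpha}}\le C(f,\rho)\,t\,\frac{\ell_r^{\alpha-1}}{n^{2-\alpha}},
\]
the last step using $\ell_{r+1}\le 2\ell_r$.

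To assemble the cascade I would avoid a Cauchy--Schwarz over the $R=O(\log n)$ terms, which would cost a spurious logarithmic factor, and instead use the triangle inequality in $L^2(\bb P_n)$:
\[
\Big\|\int_0^t n^{\alpha-3/2}\sum_x(\Psi_x^j-\Psi_x^{j+1})f'(\tfrac{x}{n})\,ds\Big\|_{L^2(\bb P_n)}\le\sum_{r=0}^{R-1}\Big\|\int_0^t F^{(r)}\,ds\Big\|_{L^2(\bb P_n)}\le C(f,\rho)\sqrt{t}\,n^{-(2-\alpha)/2}\sum_{r=0}^{R-1}\ell_r^{(\alpha-1)/2}.
\]
Because $\alpha>1$, the quantities $\ell_r^{(\alpha-1)/2}=\big(2^{(\alpha-1)/2}\big)^r(K^j)^{(\alpha-1)/2}$ grow geometrically, so the sum is bounded by a constant multiple of its largest term $\ell_R^{(\alpha-1)/2}=(K^{j+1})^{(\alpha-1)/2}$. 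Squaring produces exactly the claimed bound $C(f,\rho)\,t\,(K^{j+1})^{\alpha-1}/n^{2-\alpha}$.

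The two features that make the argument work both hinge on $\alpha>1$, and these are where I would concentrate the care. First, the scale-independence of $\int\psi_x^\ell\,d\mu_\sigma$ is what certifies every dyadic increment as admissible for Proposition \ref{p2.3.4}; without it the $\|\cdot\|_{-1}$-estimate is simply unavailable. Second, the geometric growth of the per-step bounds is what forces the cascade sum to be dominated by the coarsest scale $K^{j+1}$, and routing it through Minkowski's inequality is what keeps the constant $n$-independent with no logarithmic loss. The reason the cascade is genuinely necessary, rather than a single application of Proposition \ref{p2.3.4} at block size $K^{j+1}$, is that the crude variance $\int(\psi^{K^j}-\psi^{K^{j+1}})^2\,d\mu_\rho$ is only of order $1/(K^j)^2$, which would give the strictly worse bound $t\,(K^{j+1})^{1+\alpha}/\big((K^j)^2 n^{2-\alpha}\big)$; the dyadic splitting is what converts the small block size into the small factor $\ell_r^{-2}$ at each step and thereby recovers the sharp power $(K^{j+1})^{\alpha-1}$.
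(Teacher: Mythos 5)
Your proof is correct and follows essentially the same route as the paper's Appendix \ref{sec_a8}: a dyadic telescoping between the scales $K^j$ and $K^{j+1}$, the per-step estimate \eqref{renormstep} obtained from Proposition \ref{p2.3.4} together with \eqref{eedes}, and Minkowski's inequality with the resulting geometric sum dominated by the coarsest scale. Your capped sequence $\ell_r=\min\{2^rK^j,K^{j+1}\}$ merely replaces the paper's separate two-term treatment of the non-dyadic remainder, and your explicit check that $\int\psi_x^\ell\,d\mu_\sigma=(\sigma-\rho)^2$ is independent of $\ell$ makes precise the mean-zero hypothesis of Proposition \ref{p2.3.4} that the paper leaves implicit.
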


Now, by Fubini's Theorem we can rewrite  \eqref{eq5}  as  
\begin{equation*}
\int_0^t n^{\alpha-3/2} \sum_x \sum_{j=0}^{\ell-1}\Big(\Psi_x^j(\eta_s^n) -\Psi^{j+1}_x(\eta_s^n)\Big)\sum_{i=0}^{j} m^i  f'\big(\tfrac{x}{n}\big) ds,
\end{equation*}
and its variance is bounded by
\[
C(f,\rho)t\ell \sum_{j=0}^{\ell-1} \Big(\sum_{i=0}^j m^i\Big)^2 \frac{(K^{j+1})^{\alpha-1}}{n^{2-\alpha}}
\leq C(f,\rho) t\ell^2 m^2  n^{-(3-2\alpha) -\delta(\alpha-1)}.
\]
Last bound is obtained from \eqref{multiscaleCPAM} and with the choice of $\gamma_\ell$ given in Lemma \ref{l3.3.1}. 
Since $\alpha \leq 3/2$, the exponent of $n$ in this last expression is negative. Summarizing the estimates we have proved so far and writing $\varepsilon =\delta(\alpha-1)$, we have just showed that
\begin{lemma}
\label{l3.3.2} For any $0<\varepsilon<(2-\alpha)(\alpha-1)$ there exist $C= C(f,\rho)$ and $\ell = \ell(\varepsilon)$ such that
\[
\bb E_n \Big[ \Big( \int_0^t n^{\alpha-3/2}\!\!\!\! \sum_{|y-x| \leq K} \!\!\!\!(y-x) a(y-x) \big\{\bar{\eta}_s^n(x) \bar{\eta}_s^n(y) -\psi_x^{K_n}(\eta_s^n) \big\} f' \big(\tfrac{x}{n}\big) ds\Big)^2 \Big] \leq
	\frac{C t}{n^{\varepsilon}},
\]
where $K = n^{1-\delta}$ and $\delta=\frac{\varepsilon}{\alpha-1}$.
\end{lemma}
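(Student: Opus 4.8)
The plan is to recognize the left-hand side as the superposition of the two multiscale errors already estimated in this section, and then to optimize the scale parameter. First I would apply Lemma~\ref{l3.3.1} with $\delta = \varepsilon/(\alpha-1)$. Its hypothesis is equivalent to $\delta < 2-\alpha$, i.e.\ to $\varepsilon < (2-\alpha)(\alpha-1)$, and it furnishes a \emph{finite} increasing family of exponents $\gamma_0 = 0 < \gamma_1 < \cdots < \gamma_\ell$ governed by the recursion implicit in \eqref{est1}, whose attractive fixed point is $\gamma^\ast = 1 - \tfrac{\delta}{2-\alpha}$. The positivity of $\gamma^\ast$, which is precisely what $\delta < 2-\alpha$ buys, is what lets finitely many scales $\ell=\ell(\varepsilon)$ climb up near $n$; I set $K^i = n^{\gamma_i}$, $\Psi^i_x = \psi_x^{K^i}$, and take the top scale $K = K^\ell$ (written $n^{1-\delta}$ in the statement) to be the box appearing in $\psi_x^K = \Psi_x^\ell$. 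Writing $z = y-x$ and letting $i(z)$ be the index with $K^{i(z)-1}\le |z| < K^{i(z)}$, I would split
\[
\bar\eta_s^n(x)\bar\eta_s^n(x+z) - \psi_x^K(\eta_s^n) = \big(\bar\eta_s^n(x)\bar\eta_s^n(x+z) - \Psi_x^{i(z)}(\eta_s^n)\big) + \big(\Psi_x^{i(z)}(\eta_s^n) - \Psi_x^\ell(\eta_s^n)\big),
\]
so that, by $(a+b)^2\le 2a^2+2b^2$, the quantity in the statement is controlled by twice the square of a \emph{local-replacement} functional plus twice the square of a \emph{scale-collapsing} functional.

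The local-replacement functional, carrying the first bracket, is verbatim the space-time integral of Lemma~\ref{lem_5}: it replaces, block by block, the product $\bar\eta(x)\bar\eta(x+z)$ by the conditional expectation $\Psi^{i(z)}_x$ of the centered density squared over a box matched to the block containing $z$. Hence I would simply quote the bound \eqref{ec3.2.2_new_new_new}, namely $\bb E_n[\,\cdot\,]\le C(f,\rho)\,t\,\ell^2/n^\delta$, which is where Proposition~\ref{p2.3.4} does the real work.

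For the scale-collapsing functional I would follow the telescoping already laid out in \eqref{eq5}. Summing the first bracket over each block leaves the combination $\sum_{i=1}^\ell M^i\Psi^i_x$ with $M^i=\sum_{K^{i-1}\le y<K^i} y\,a(y)$; subtracting the single top-scale object $\big(\sum_{y=1}^{K^\ell-1} y\,a(y)\big)\Psi^\ell_x$ and using Fubini turns the difference into a telescoping sum of consecutive differences $\Psi^j_x - \Psi^{j+1}_x$ weighted by partial sums $\sum_{i\le j} M^i$, each bounded by the finite mean $m=\sum_{y>0} y\,a(y)$. Estimating every consecutive difference with Lemma~\ref{lem_multi_scale_1} (inequality \eqref{multiscaleCPAM}) and summing the $\ell$ terms then yields $C(f,\rho)\,t\,\ell^2 m^2\,n^{-(3-2\alpha)-\delta(\alpha-1)}$, exactly as derived just before the statement.

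It then remains to add the two contributions and substitute $\varepsilon=\delta(\alpha-1)$. Since $0<\alpha-1<\tfrac12$ in this regime we have $\delta=\varepsilon/(\alpha-1)>\varepsilon$, so the first bound is at most $C\,t\,\ell^2/n^\varepsilon$; and since $\alpha<\tfrac32$ forces $3-2\alpha>0$, the second bound is at most $C\,t\,\ell^2 m^2/n^\varepsilon$. As $\ell=\ell(\varepsilon)$ is finite the factors $\ell^2$ are absorbed into the constant, producing the claimed $C\,t/n^\varepsilon$. The substance of the argument lies upstream, in Lemmas~\ref{lem_5} and \ref{lem_multi_scale_1}; the delicate point at this stage is purely one of bookkeeping, namely ensuring that \emph{both} telescoping errors carry a strictly negative power of $n$ simultaneously. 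It is precisely this double requirement --- positivity of the fixed point $\gamma^\ast$ for the local-replacement error, together with $3-2\alpha>0$ for the scale-collapsing error --- that forces the admissible range $\varepsilon<(2-\alpha)(\alpha-1)$ and the matching choice $K=n^{1-\delta}$.
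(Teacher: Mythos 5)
Your proposal is correct and follows the paper's own argument essentially verbatim: the same decomposition into the block-replacement error of Lemma \ref{lem_5} and the telescoped scale-collapsing error of \eqref{eq5} controlled by Lemma \ref{lem_multi_scale_1}, followed by the same substitution $\varepsilon=\delta(\alpha-1)$, the observations $\delta>\varepsilon$ and $3-2\alpha\geq 0$, and absorption of $\ell^2$ into the constant. The only cosmetic difference is that you make explicit the fixed point $\gamma^\ast=1-\tfrac{\delta}{2-\alpha}$ of the recursion in Lemma \ref{l3.3.1} and where each constraint enters, which the paper leaves implicit.
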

Finally, using the Cauchy-Schwarz inequality we see that
\[
\begin{split}
\bb E_n\Big[& \Big( \int_0^t n^{\alpha-3/2} \sum_{|y-x| \leq K} 
		(y-x) a(y-x) \psi_x^{K_n}(\eta_s^n)
		f' \big(\tfrac{x}{n} \big) ds \Big)^2 \Big] \\
&\leq c(\rho) t^2 n^{2\alpha-3} \sum_x \frac{K f'(\tfrac{x}{n})^2 m^2}{K_n^2}\leq C(f,\rho) t^2 n^{2\alpha-3+\delta}.
\end{split}
\]
It is exactly on the last line the only place where we need to assume that $\alpha <3/2$. If $\alpha <3/2$, we have just proved that for any $0<\delta<3-2\alpha$, there exists a constant $C = C(f,\rho,\delta,T)$ such that the variance of
\begin{equation}
\label{ec3.3.2}
\int_0^t n^{\alpha-3/2} \!\!\!\!\sum_{|y-x| \leq n^{1-\delta}}\!\!\!\! (y-x) a(y-x) \bar{\eta}_s^n(x) \bar{\eta}_s^n(y) f' \big( \tfrac{x}{n}\big) ds
\end{equation}
is bounded by $C t n^{-\delta}$. Recall the rough bound $C t^2 n^{2\alpha-2}$ for the variance of \eqref{ec3.3.2} obtained in Section \ref{s3.1}. By Lemma \ref{lvar} we conclude that there exist $C, \varepsilon,\delta > 0$ such that the variance of \eqref{ec3.3.2} is bounded by $Ct^{1+\delta}n^{-\varepsilon}$. By Proposition \ref{p2.2.3}, we conclude that the sequence $\{A_t^n(f); t \in [0,T]\}_{n \in \bb N}$ is tight for any $\alpha<3/2$, and moreover any limit point is identically zero.

\subsubsection{\bf{Tightness of $\{A_t^n(f); t \in [0,T]\}_{n \in \bb N}$: the case $\alpha=3/2$}}
\label{s3.4}

\quad

\quad

In the previous sections we showed, for $\alpha<3/2$, that the sequence of processes $\{A_t^n; t \in [0,T]\}_{n \in \bb N}$ is tight and that any limit point is identically zero. For $\alpha =3/2$, the limit points are given by a non-trivial function of the density of particles and in particular there is no reason to believe that they are identically zero. In this section we will show tightness of $\{A_t^n(f); t \in [0,T]\}_{n \in \bb N}$ for $\alpha =3/2$. Note that in the previous section we showed that $A_t^n(f)$ is asymptotically equivalent to
\begin{equation}
\label{ec3.4.1}
m \int_0^t \sum_x \psi_x^{K_n}(\eta_s^n) f' \big( \tfrac{x}{n} \big) ds,\footnote{We actually proved this with $m$ replaced by $\sum_{y=1}^{K_n-1} y a(y)$, but this last sum converges to $m$ as $n \to \infty$.
}
\end{equation}
where $K_n = n^{1-\delta}$ for some $\delta>0$ small enough, in the sense that the difference converges to $0$ in distribution with respect to the $J_1$-Skorohod topology, as $n\to\infty$. Therefore, it is enough to prove tightness of this process. By the equivalence of ensembles \eqref{eedes} we know that $\psi_x^{K_n}(\eta)$ is well approximated by the square of the number of particles on a box of size $K_n$ around $x$. If this box were of size $\varepsilon n$, then it would be a function of the fluctuation field $\mc Y_t^n$. Therefore, our mission now will be to go from a block of size $n^{1-\delta}$ to a block of size $\varepsilon n$. This step is what we call the {\em two-blocks estimate} as can be stated as follows. 
\begin{equation}\label{bound0}
\bb E_n \Big[ \Big( \int_0^t \sum_x \big( \psi_x^{K_n}(\eta_s^n) -\psi_x^{\varepsilon n}(\eta_s^n)\big) f'\big(\tfrac{x}{n} \big) ds \Big)^2 \Big] \leq  C(f,\rho) t \sqrt{\varepsilon}.
\end{equation}
The proof of last results is given in Appendix \ref{sec_a9} and it was  introduced in \cite{GonJar1} (see also \cite{GonJar2, GonJarSet}).
By the Cauchy-Schwarz inequality together with \eqref{eedes}, we have that
\begin{equation}\label{bound1}
\bb E_n \Big[ \Big( \int_0^t \sum_x \psi_x^{K} (\eta_s^n) f' \big( \tfrac{x}{n}\big) ds \Big)^2 \Big] \leq \frac{C(f,\rho) t^2n}{K}
\end{equation}
for any $K \in \bb N$. Choosing $K = K_n$ and $K=\varepsilon n$, from the previous estimates,  we obtain the bound
\[
\bb E_n \Big[ \Big( \int_0^t \sum_x \psi_x^{K_n}(\eta_s^n) f' \big( \tfrac{x}{n} \big) ds\Big)^2 \Big] \leq C(f,\rho) \min \{ t^2 n^\delta, t \sqrt{\varepsilon}+t^2\varepsilon^{-1}\}.
\]
Note that above, the first bound comes from \eqref{bound1} taking $K=n^{1-\delta}$, the second bound comes from \eqref{bound0}  and the last bound comes from \eqref{bound1} with $K=n\varepsilon$.
If we optimize over $\varepsilon$ in the second and third bounds, by taking $\epsilon=t^{\theta}$, we see that $\theta=2/3$ and the expectation is bounded from above by $C(f,\rho) t^{4/3}$. However, to do that we have the restriction $\varepsilon \geq n^{-\delta}$, which imposes $t \geq n^{-3\delta/2}$. For $t \leq n^{-3\delta/2}$, the first bound also gives a bound of the form $C(f,\rho) t^{4/3}$. By Proposition \ref{KCC}, we conclude that \eqref{ec3.4.1} is tight, as we wanted to show.

\section{Convergence of the sequence of density fields}

\subsection{Convergence: the case $\alpha <3/2$}
\label{s3.5}

\quad

\quad

In Sections \ref{s3.1}-\ref{s3.3} we showed that the sequence $\{\mc Y_t^n; t \in [0,T]\}_{n \in \bb N}$ is tight for $\alpha <3/2$. Let $\{\mc Y_t; t \in [0,T]\}$ be a limit point. For simplicity, up to the end of this section we denote by $n$ the subsequence along which $\{\mc Y_t^n; t \in [0,T]\}_{n \in \bb N}$ converges to $\{\mc Y_t; t \in [0,T]\}$. Recall from  \eqref{eq3.1} that
\begin{equation}
\label{ec3.5.1}
\mc Y_t^n(f) = \mc Y_0^n(f) + \int_0^t \mc Y_s^n\big(\mc L^\rho_n f \big) ds -A_t^n(f) + \mc M_t^n(f).
\end{equation} 
We want to take the limit in the previous equation  to obtain a martingale characterization of $\{\mc Y_t; t \in [0,T]\}$. 
Without loss of generality we can assume that the real-valued martingale processes $\{\mc M_t^n(f); f \in [0,T]\}_{n \in \bb N}$ converge to $\{\mc M_t(f); t \in [0,T]\}$ for any $f \in \bb S(\bb R)$, as $n\to\infty$. Note that, by the definition of the density fluctuation field given in \eqref{ec1.5.1}, the function $f$ in \eqref{ec3.5.1} is a {\em trajectory} and therefore the previous result does not apply to our setting. 

Recall from Section \ref{sec:tight_ini_field} that the initial distribution $\mc Y_0^n$ converges to a white noise of variance $\rho(1-\rho)$. In fact, for any $t \in [0,T]$ the same affirmation is true: the $\bb S'(\bb R)$-valued random variables $\mc Y_t^n$ converge in distribution to a white noise of variance $\rho(1-\rho)$. Therefore, the limit process $\{\mc Y_t; t \in [0,T]\}$ is stationary. 

Now we turn into the terms $A_t^n(f)$, $\mc M_t^n(f)$. These terms are not quite covered by the computations of Sections \ref{s3.1}-\ref{s3.3}, since the function $f$ was constant there. The martingale term is not difficult to deal with: for $t \in [0,T]$ and $\ell \in \bb N$  define $L = \lfloor t 2^n \rfloor$, $t_i = \frac{i}{2^\ell}$ and
\[
\mc M_t(f) = \lim_{\ell \to \infty} \sum_{i=0}^{L-1} \big( \mc M_{t_{i+1}}(f_{t_i}) - \mc M_{t_i}(f_{t_i})\big)
\]
Using the orthogonal increments property, we can show that $\{\mc M_t(f); t \in [0,T]\}$ is a martingale. The same approximation procedure can be done for $\{\mc M_t^n(f); t \in [0,T]\}_{n\in\bb N}$ and the limit, as $n\to\infty$, is uniform in $t$. Therefore, we conclude that
\[
\mc M_t(f) = \lim_{n \to \infty} \mc M_t^n(f)
\]
in $L^2(\bb P_n)$. This is sufficient to take the limit in \eqref{ec3.5.1} in what respects to the martingale term. The corresponding quadratic variation is equal to
\[
\<\mc M_t(f)\> = 2\rho(1-\rho)\int_0^t \mc E(f_s) ds.
\]

The computations made in Sections \ref{s3.1}-\ref{s3.3} can also be performed for smooth trajectories $f:[0,T]\to \bb S(\bb R)$. The only difference will be that the constant $C(f,\rho)$ now depends on $\rho$ and on the {\em whole trajectory} $f:[0,T] \to \bb S(\bb R)$.\footnote{
In order to avoid heavy notation, we decided to restrict the computations in previous sections to functions not depending on time.
}After this observation we conclude that $A_t^n(f)$ converges to $0$ in $L^2(\bb P_n)$, as $n\to\infty$. 

Finally, using Proposition \ref{p2} we can change $\mc L^\rho_n f_s$ by $\mc L^\rho f_s$ in \eqref{ec3.5.1}. Therefore, we are left to prove the convergence of the integral term
\[
\int_0^t \mc Y_s^n(\mc L^\rho f) ds \;\;\;\;\text{  to  } \;\;\;\; \int_0^t \mc Y_s(\mc L^\rho f) ds,
\]
as $n\to\infty$.
Recall that this last integral is defined through a limiting procedure, that is approximating $\mc L^\rho f_s$ by $\psi_\varepsilon \mc L^\rho f_s$. We can check that the approximation of $\mc Y_s^n(\mc L^\rho f_s)$ by $\mc Y_s^n(\psi_\varepsilon \mc L^\rho f_s)$ is uniform in $n$, and therefore the convergence of the integral term is guaranteed.

Putting all these elements together, we conclude that for any smooth trajectory $f: [0,T] \to \bb S(\bb R)$ we have
\[
\mc Y_t(f) = \mc Y_0(f) + \int_0^t \mc Y_s\big(  \mc L^\rho f \big) ds + \mc M_t(f),
\]
where $\{\mc M_t(f); t \in [0,T]\}$ is a continuous martingale of quadratic variation
\[
\< \mc M_t(f) \> = 2\rho(1-\rho) \int_0^t \mc E(f_s) ds.
\]
In other words, $\{\mc Y_t; t \in [0,T]\}$ is a stationary solution of \eqref{ec1.5.2}. By Proposition \ref{p1.4.3}, the distribution of $\{\mc Y_t; t \in [0,T]\}$ is uniquely determined. We conclude that the sequence $\{\mc Y_t^n; t \in [0,T]\}_{n \in \bb N}$ has a unique limit point, and therefore it actually converges to this limit point. This ends the proof of Theorem \ref{t1.5.1}.

\subsection{Convergence along subsequences: the case $\alpha=3/2$}
\label{s3.6}

\quad

\quad

In Section \ref{s3.4} we showed that the sequence of processes $\{\mc Y_t^n; t \in [0,T]\}_{n \in \bb N}$ is tight for $\alpha=3/2$. As in the previous section, let $\{\mc Y_t; t \in [0,T]\}$ be one of its limit points. For simplicity we call $n$ the subsequence over which $\{\mc Y_t^n; t \in [0,T]\}_{n \in \bb N}$ converges to $\{\mc Y_t; t \in [0,T]\}$. The treatment of the initial field, the  martingale and the  integral term in \eqref{ec2.2.1} remains the same as in the previous section. The difference between the case $\alpha <3/2$ and $\alpha =3/2$ comes from the term $A_t^n(f)$. We showed in Section \ref{s3.3} that $A_t^n(f)$ is asymptotically equivalent to\footnote{The generalization of the arguments to time-dependent test functions can be done as explained in Section \ref{s3.5}
}
\[
m \int_0^t \sum_{x } \psi_x^{K_n}(\eta_s^n) f'_s\big(\tfrac{x}{n}\big) ds.
\]
In Section \ref{s3.4} we showed in \eqref{bound0} that
\begin{equation}
\label{ec3.6.1}
\bb E_n \Big[ \Big( \int_0^t \sum_{x } \big(\psi_x^{K_n} (\eta_s^n) - \psi_x^{\varepsilon n}(\eta_s^n)\big) f'_s \big(\tfrac{x}{n} \big) ds\Big)^2 \Big] \leq C(f,\rho) t \sqrt{\varepsilon}.
\end{equation}
This bound is uniform in $n$, so if we are able to show that $$\int_0^t \sum_x \psi_x^{\varepsilon n}(\eta_s^n) f'_s \big(\tfrac{x}{n} \big) ds$$ is asymptotically equivalent to a function of the process $\{\mc Y_t^n; t \in [0,T]\}$ we will be close to prove Theorem \ref{t1.6.2}. In Section \ref{s1.6} we introduced a general approximation of the identity $\iota_\varepsilon$. In this section we use the particular choice $\iota_\varepsilon(x) = \frac{1}{\varepsilon}\mathbf{1}\{x \in (0,\varepsilon]\}$. This is specially convenient because of the identity
\[
\frac{1}{\varepsilon \sqrt n} \sum_{i=1}^{\varepsilon n} \bar{\eta}_t^n(x+i) = \mc Y_t^n \ast \iota_\varepsilon \big( \tfrac{x}{n} \big).
\]
Note that last identity is a consequence of the fact that $$\mc Y_t^n \ast \iota_\varepsilon \big( \tfrac{x}{n} \big)=\frac{1}{\sqrt n}\sum_y\iota_\varepsilon(y-x)\bar{\eta}^n_t(y).
$$
In terms of this notation the equivalence of ensembles \eqref{eedes} gives
\begin{equation}
\label{eenew}
\bb E_n \big[ \big(\psi_x^{\varepsilon n}(\eta_t^n) - \tfrac{1}{n} \Big(\mc Y_t^n \ast \iota_\varepsilon\big(\tfrac{x}{n} \big)\Big)^2 + \tfrac{\rho(1-\rho)}{\varepsilon n}\big)^2 \big] \leq \frac{c(\rho)}{(\varepsilon n )^3}.
\end{equation}
Using this bound we can see that
\begin{equation}
\label{ec3.6.2}
\bb E_n\Big[ \Big( \int_0^t \sum_x\big( \psi_x^{\varepsilon n}(\eta_s^n) - \tfrac{1}{n} \mc Y_s^n\ast \iota_\varepsilon \big(\tfrac{x}{n}\big)^2\big) f'_s\big(\tfrac{x}{n}\big) ds\Big)^2 \Big] \leq C(f,\rho,t) \big(\tfrac{1}{\varepsilon^2 n} + \tfrac{1}{\varepsilon^2 n^4}\big).
\end{equation}
We note that the previous bound follows from the following computation: first sum and subtract $\tfrac{\rho(1-\rho)}{\varepsilon n}\sum_xf'_s\big(\tfrac{x}{n}\big)$  inside the time integral, use the inequality $(x+y)^2\leq 2x^2+2y^2)$; the first error term comes from  \eqref{eenew},  and the second one comes from the approximation of the integral by the Riemann sum. Therefore, we have just proved that
\begin{equation}
\label{ec3.6.3}
\lim_{\varepsilon \to 0} \limsup_{n \to \infty} \bb E_n \Big[ \Big( A_t^n(f) - m \int_0^t \tfrac{1}{n} \sum_x \Big(\mc Y_s^n \ast \iota_\varepsilon \big( \tfrac{x}{n} \big) \Big)^2f'_s \big( \tfrac{x}{n} \big) ds \Big)^2 \Big] =0.
\end{equation}
Now we are in position to prove Theorem \ref{t1.6.2}. First we note that \eqref{ec3.6.1} implies the bound
\[
\bb E_n \Big[ \Big( \int_0^t \sum_x \big( \psi_x^{\delta n}(\eta_s^n) -\psi_x^{\varepsilon n}(\eta_s^n) \big) f_s'\big(\tfrac{x}{n} \big) ds \Big)^2\Big] \leq C(f,\rho) \varepsilon \sqrt{t}
\]
for any $\delta < \varepsilon$. Recall \eqref{Amacfield}. Passing to the limit in the previous expression, after using \eqref{ec3.6.2}, we can prove that
\[
E\big[\big(\mc A_{0,t}^\varepsilon (f) - \mc A_{0,t}^\delta(f) \big)^2 \big] \leq C(f,\rho) \varepsilon \sqrt{t}.
\]
A careful checking of the constants $C(f,\rho)$ shows that we can choose $C(f,\rho) = C_n(f,\rho)$ in such a way that
\[
\limsup_{n \to \infty} C_n(f,\rho) = c(\rho) \int_0^t \|f'_s\|^2 ds.
\]
Noticing that the process $\{\mc Y_t; t \in [0,T]\}$ is stationary, from Definition \ref{def EE}, the previous bound  shows that $\{\mc Y_t; t \in [0,T]\}$ satisfies an energy estimate with $\kappa_0 = c(\rho)$ and $\beta = \frac{1}{2}$. Therefore, the process $\{\mc A_t; t \in [0,T]\}$ given by
\[
\mc A_t(f) = \lim_{\varepsilon \to 0} \mc A_{0,s}(f)
\]
is well defined and by \eqref{ec3.6.3} we have that
\[
\lim_{n \to \infty} A_t^n(f) = m \mc A_t(f).
\]
Let us recall what we have proved about the process $\{\mc Y_t; t \in [0,T]\}$. In the previous section, we showed that $\{\mc Y_t; t \in [0,T]\}$  is stationary, and in particular it is USC, see the comments below \eqref{ec3.5.1}. We just proved that it satisfies an energy estimate for $\beta = \frac{1}{2}$. We proved that the discrete process $\{A_t^n; t \in [0,T]\}$ converges in distribution to the process $\{\mc A_t; t \in [0,T]\}$, which is well defined in virtue of the energy condition. The arguments of the previous section shows that
\[
\mc Y_t(f) - \mc Y_0(f) - \int_0^t \mc Y_s\big(\mc L^\rho f \big) ds + m \mc A_t(f)
\]
is a continuous martingale of quadratic variation
\[
2\rho(1-\rho) \int_0^t \mc E(f_s) ds.
\]
This is exactly what we called a {\em stationary energy solution} of the Burgers equation \eqref{ec1.5.3}. This ends  the proof of Theorem \ref{t1.6.2}.

\section{Discussions and remarks}

\subsection{Around the KPZ universality class}

\label{s6.1}

The stochastic Burgers equation
\begin{equation}
\label{SBE}
d \mc Y_t = \Delta \mc Y_t dt+ \lambda\nabla \mc Y_t^2 dt + \nabla d \mc W_t
\end{equation}
has received a lot of attention in recent years. In a groundbreaking work, \cite{Hai1} developed a meaningful notion of solution for this equation, and proved uniqueness of such solutions. In a very different line of research, the relation of this equation with stochastic integrable systems allows to describe in a very precise way various one-dimensional marginals of its solutions, see \cite{Cor} for a review. 
The stochastic Burgers equation and its integrated counterpart, namely, the KPZ equation, are conjectured to describe the height fluctuations of growing, one-dimensional flat interfaces, or more generally fluctuation phenomena of one-dimensional stochastic systems near a stationary, non-equilibrium state. This is known as the {\em weak KPZ universality} conjecture. The adjective weak does not indicate that this conjecture is weak; it makes reference to the fact that in order to derive \eqref{SBE} from microscopic models, one needs to introduce an extra parameter that measures the asymmetry of the system, which is tuned to converge to $0$ at a proper rate. 
These systems are believed to belong to the so-called {\em KPZ universality class}, which has scaling exponents $1:2:3$. In a more precise way, let $\varepsilon >0$ be a scaling parameter and consider the process $\{\mc Y_t^\varepsilon; t \geq 0\}$ formally defined as
\begin{equation}
\label{uva}
\mc Y_t^\varepsilon(x) = \varepsilon^{1/2} \mc Y_{t/\varepsilon^{3/2}}(x/\varepsilon).
\end{equation}
Then the process $\{\mc Y_t^\varepsilon; t \geq 0\}$ is a solution of the equation
\[
d \mc Y^\varepsilon_t = \varepsilon^{1/2} \Delta \mc Y_t^\varepsilon dt + \lambda \nabla (\mc Y_t^\varepsilon)^2 dt + \varepsilon^{1/4} \nabla d{\mc W}_t.
\]
As $\varepsilon \to 0$, this process should converge to a well-defined process, the so-called {\em KPZ fixed point}. By construction, this process has to be invariant under the $1:2:3$ scaling defined above. 
In \cite{CorQuaRem}, the authors propose a candidate for the KPZ fixed point. However, it seems that even the existence of the object defined in \cite{CorQuaRem} is not proved rigorously. It is not difficult to show that, at least formally, \eqref{ec1.5.3} is invariant under the $1:2:3$ scaling. 
In a more rigorous way, we have the following result:
\begin{theorem}
\label{papa}
The set of stationary, energy solutions of \eqref{ec1.5.3} is invariant under the space-time renormalization group of exponents $1:2:3$.
\end{theorem}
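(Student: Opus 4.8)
The plan is to exhibit the $1{:}2{:}3$ renormalization group explicitly and to verify that each of the three defining properties of a stationary energy solution—stationarity, the pair USC/EE, and the martingale characterization—is preserved. For $b>0$ define the rescaling $T_b$ acting on a process $\{\mc Y_t\}$ by $(T_b\mc Y)_t(x)=b^{1/2}\mc Y_{b^{3/2}t}(bx)$, which in distributional form reads $(T_b\mc Y)_t(f)=b^{-1/2}\mc Y_{b^{3/2}t}(S_bf)$ with $(S_bf)(x):=f(x/b)$. Since $\|S_bf\|^2=b\|f\|^2$, a one-line variance computation shows that $(T_b\mc Y)_t$ is a white noise of variance $\rho(1-\rho)$ whenever $\mc Y_t$ is; hence $T_b\mc Y$ is again stationary and, by the remark following Definition \ref{def EE}, USC. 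Moreover $T_b$ is a bijection of the class of stationary processes, with inverse $T_{1/b}$. It therefore suffices to show that $T_b$ maps stationary energy solutions of \eqref{ec1.5.3} into stationary energy solutions of the \emph{same} equation; invariance of the whole set follows at once.

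The heart of the argument is the homogeneity of $\mc L^\rho$. Because $c_\rho(\cdot)$ depends only on the sign of its argument and $\theta^\alpha(y)=y$ for $\alpha>1$, a change of variables in \eqref{operatorLrho} gives $\mc L^\rho(S_bf)=b^{-\alpha}S_b(\mc L^\rho f)$, and the identical relation holds for the symmetric part $\mc L^{1/2}$. Writing $\mc M_t^{\mc Y}(f)$ for the functional entering the energy-solution martingale problem of $\mc Y$, I would combine this homogeneity with the time change $u=b^{3/2}s$ to establish, for every smooth trajectory $f:[0,T]\to\mathbb S(\bb R)$, the identity
\[
\mc M_t^{T_b\mc Y}(f)=b^{-1/2}\,\mc M_{b^{3/2}t}^{\mc Y}(\tilde f),\qquad \tilde f_u:=S_bf_{b^{-3/2}u}.
\]
The boundary and $\partial_s$ terms manifestly produce the factor $b^{-1/2}$, while the drift term $\int_0^t (T_b\mc Y)_s(\mc L^\rho f_s)\,ds$ produces $b^{\alpha-2}$, which equals $b^{-1/2}$ \emph{exactly} when $\alpha=3/2$. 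As a deterministic time change of a continuous martingale is again a martingale for the correspondingly time-changed filtration—which is precisely the natural filtration of $T_b\mc Y$—the right-hand side is a continuous martingale, and since $f\mapsto\tilde f$ is a bijection of smooth trajectories the conclusion holds for all $f$. Its quadratic variation is
\[
\big\langle \mc M^{T_b\mc Y}(f)\big\rangle_t=b^{-1}\big\langle \mc M^{\mc Y}(\tilde f)\big\rangle_{b^{3/2}t}=2\rho(1-\rho)\,b^{3/2-\alpha}\int_0^t\langle f_s,-\mc L^{1/2}f_s\rangle\,ds,
\]
where the exponent $b^{3/2-\alpha}$ is again trivial exactly at $\alpha=3/2$. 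This double occurrence of the threshold is the analytic incarnation of the fact that \eqref{ec1.5.3} is critical, and the constants $\rho(1-\rho)$ and $m$ reappear unchanged, so $T_b\mc Y$ solves the same equation.

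It remains to transport the Energy Estimate. From the convolution scaling $(T_b\mc Y)_s*\iota_\varepsilon(x)=b^{1/2}(\mc Y_{b^{3/2}s}*\iota_{b\varepsilon})(bx)$ one gets, at the level of the regularized functionals of Definition \ref{def EE},
\[
\mc A_{s,t}^\varepsilon(f)[T_b\mc Y]=b^{-1/2}\,\mc A_{b^{3/2}s,\,b^{3/2}t}^{b\varepsilon}(S_bf)[\mc Y].
\]
Inserting this into \eqref{EE} for $\mc Y$ (valid with $\beta=\tfrac12$ by the proof of Theorem \ref{t1.6.2}) and using $\|(S_bf)'\|^2=b^{-1}\|f'\|^2$, the estimate \eqref{EE} holds for $T_b\mc Y$ with the same $\beta$ and rescaled constant $\kappa_0\,b^{3\beta/2-1}<\infty$. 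Thus $T_b\mc Y$ is USC and satisfies an Energy Estimate, so by Proposition \ref{p1.6.1} the limiting process $\mc A_t[T_b\mc Y]$ exists and is the object entering the martingale identity above.

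I expect the main obstacle to be the careful treatment of the nonlinear term $\mc A_t$ rather than the power-counting. Two points need care. First, $\mc A_t$ exists only as an $L^2$-limit of the $\mc A^\varepsilon$, so the scaling identity must be proved at the level of $\mc A^\varepsilon$ and then passed to the limit using the EE bound just obtained. Second, for genuinely time-dependent trajectories $f_s$ the identity $\mc A_t^{T_b\mc Y}(f)=b^{-1/2}\mc A_{b^{3/2}t}[\mc Y](\tilde f)$ must be justified through the piecewise-constant-in-time approximation of $\mc A_t$ employed in Section \ref{s3.5}. A subsidiary technical point is that, for each fixed $t$, the variable $(T_b\mc Y)_t(\mc L^\rho f)$ is defined only up to null sets via Lemma \ref{l1.4.2}; this is handled exactly as in the unscaled case, using the USC property that $T_b\mc Y$ inherits from stationarity. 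Once these are settled, $T_b\mc Y$ meets all requirements of a stationary energy solution of \eqref{ec1.5.3}, and since $T_b$ is invertible the set of such solutions is invariant under the group $\{T_b\}_{b>0}$.
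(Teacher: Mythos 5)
Your proposal is correct and follows essentially the route the paper intends: the paper omits the proof of Theorem \ref{papa} as ``elementary'', meaning exactly the power counting you carry out, i.e.\ the scaling of \eqref{uva} implemented rigorously through the martingale formulation, using the homogeneity $\mc L^\rho(S_bf)=b^{-\alpha}S_b(\mc L^\rho f)$ and the white-noise-preserving normalization $b^{1/2}$. Your exponents check out --- the drift produces $b^{\alpha-2}$ and the quadratic variation $b^{3/2-\alpha}$, both trivial precisely at $\alpha=3/2$, while the quadratic term $\mc A^{\varepsilon}$ and the Energy Estimate transport with the exact factor $b^{-1/2}$ and constant $\kappa_0 b^{3\beta/2-1}$ --- and the remaining issues (time-dependent test functions in $\mc A_t$, null sets in Lemma \ref{l1.4.2}) are the routine technicalities you already flag.
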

The proof of this theorem is elementary, so we omit it. Note that \eqref{ec1.5.3} is not one equation, but a two-parameter family of equations. One can parametrize \eqref{ec1.5.3} by the speed $m$ and the skewness of the operator $\mc L^\rho$; the factor $2\rho(1-\rho)$ fixes the variance of the stationary solution of the equation and can be set to be equal to $1$ by means of a spatial rescaling.
Therefore, we have derived a two-parameter family processes which are $1:2:3$-scale invariant in the sense of Theorem \ref{papa}, as scaling limits of interacting particle systems. Since we are not able to show uniqueness of solutions of \eqref{ec1.5.3}, we can not say that these processes are scale-invariant in the sense of \eqref{uva}, or even that they are different for different values of the parameters $m$, $\rho$. 
At first sight, it is even not clear whether energy solutions of \eqref{ec1.5.3} are non-trivial. The following two propositions show that these solutions are indeed non-trivial in some sense:
\begin{proposition}
\label{pfOU}
Let $\{\mc X_t;  t \geq 0\}$ be the stationary solution of 
\[
d \mc X_t = \mc L^\rho \mc X_t dt + \sqrt{ 2\rho(1-\rho) (- \mc L^{1/2})} d \mc W_t.
\]
Then the limit 
\[
\mc A_t(f) = \lim_{\epsilon \to 0} \int_0^t \mc X_s \ast \iota_\epsilon(x)^2 f'(x) dx ds
\]
defines a non-trivial process $\{\mc A_t; t \geq 0\}$. In particular, $\{\mc X_t; t \geq 0\}$ is an energy solution of \eqref{ec1.5.3} with $m=0$.
\end{proposition}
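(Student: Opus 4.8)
The plan is to exploit that $\{\mc X_t; t\ge 0\}$ is a \emph{stationary Gaussian} field, so that every quantity below is computable by Wick's formula and reduces to the spacetime two-point function of $\mc X$. First I would record this covariance. Since $\mc X$ is the stationary solution of the linear equation, each marginal $\mc X_t$ is a white noise of variance $\chi=\rho(1-\rho)$, and for $s_1\le s_2$
\[
E\big[\mc X_{s_1}(g_1)\,\mc X_{s_2}(g_2)\big]=\chi\,\langle g_1,\, e^{(s_2-s_1)\mc L^\rho}g_2\rangle .
\]
Passing to Fourier and using that the symbol of $\mc L^\rho$ has real part equal to the symbol $-c_\alpha|\xi|^\alpha$ of its symmetric part $\mc L^{1/2}$, the smoothed field $X^\varepsilon_s:=\mc X_s\ast\iota_\varepsilon$ satisfies
\[
E\big[X^\varepsilon_{0}(0)\,X^\delta_{\tau}(z)\big]=\frac{\chi}{2\pi}\int_{\bb R}e^{i\xi z}\,\hat h(\varepsilon\xi)\,\overline{\hat h(\delta\xi)}\;e^{-\tau c_\alpha|\xi|^\alpha}\,e^{-i\tau\omega(\xi)}\,d\xi,\qquad \tau\ge 0,
\]
where $\omega$ (the skew symbol) is real and odd. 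The decay factor $e^{-\tau c_\alpha|\xi|^\alpha}$ is the only input that will matter for the bounds, the phase $e^{-i\tau\omega(\xi)}$ only ever helping.

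Next I would establish the energy estimate \eqref{EE}. Because $\int f'=0$, the divergent means $E[X^\varepsilon_{s'}(x)^2]=\chi\|h\|^2/\varepsilon$ cancel in $\int X^\varepsilon_{s'}(x)^2 f'(x)\,dx$, so I may freely replace $X^\varepsilon{}^2$ by its Wick square. Expanding $E\big[(\mc A^\varepsilon_{s,t}(f)-\mc A^\delta_{s,t}(f))^2\big]$ by Wick's formula, the fourth moment collapses to a double spacetime integral of a \emph{perfect square} of two-point functions; in Fourier the integrand carries the nonnegative factor $[\hat h(\varepsilon\eta)\hat h(\varepsilon(\xi-\eta))-\hat h(\delta\eta)\hat h(\delta(\xi-\eta))]^2$. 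Carrying out the double time integration produces $\min\{(t-s)^2,\;(t-s)/\Lambda\}$ with $\Lambda=c_\alpha(|\eta|^\alpha+|\xi-\eta|^\alpha)$; the difference of smoothings localizes the remaining frequency integral to $|\eta|\ge 1/\varepsilon$, where $\min=(t-s)/\Lambda$ is of order $(t-s)|\eta|^{-\alpha}$, and for $\alpha>1$
\[
\int_{|\eta|\ge 1/\varepsilon}(t-s)\,|\eta|^{-\alpha}\,d\eta\;\le\; C\,(t-s)\,\varepsilon^{\alpha-1}.
\]
After integrating against $|\widehat{f'}(\xi)|^2$ this gives $E[(\mc A^\varepsilon_{s,t}-\mc A^\delta_{s,t})^2]\le \kappa_0\,\varepsilon^{\alpha-1}(t-s)\,\|f'\|^2$, an estimate of the form \eqref{EE} (at $\alpha=\tfrac32$ the gain is $\varepsilon^{1/2}$; the positivity of the power is exactly where $\alpha>1$ enters). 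This is the continuum analogue of the bound \eqref{bound0} proved for the particle system, and it yields the $L^2$-Cauchy property along $\varepsilon\to0$; combined with the crude a priori bound $E[(\mc A^\varepsilon_{s,t})^2]\le C(t-s)^2/\varepsilon$, Proposition \ref{p1.6.1} then provides the limit $\mc A_t(f)$ and a continuous $\bb S'(\bb R)$-valued modification, the optimization $\varepsilon=(t-s)^{1/\alpha}$ producing the Hölder scale $(t-s)^{2-1/\alpha}$.

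To prove non-triviality I would compute the limiting variance. The same Wick expansion writes $\mathrm{Var}(\mc A^\varepsilon_{s,t}(f))$ as a nonnegative Fourier integral; letting $\varepsilon\to0$ the cutoffs $\hat h(\varepsilon\cdot)\to 1$, and dominated convergence (the large-$|\eta|$ tail is integrable since $\alpha>1$, while the region $\xi\to0$ is suppressed by $|\widehat{f'}(\xi)|^2=|\xi|^2|\hat f(\xi)|^2$) produces a finite limit of order $c(\alpha)\,(t-s)^{2-1/\alpha}\int_{\bb R}|\widehat{f'}(\xi)|^2\,\Theta(\xi)\,d\xi$ with $\Theta>0$. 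The cleanest way to see this limit is \emph{strictly positive} is to view $\mc A_{s,t}(f)$ as an element of the second Wiener chaos generated by the Gaussian field $\mc X$: its variance is the squared $L^2$-norm of a symmetric kernel $K$, which is the $L^2$-limit of the explicit kernels $K^\varepsilon$ and is a continuous, not-identically-zero function. Hence $\mathrm{Var}(\mc A_{s,t}(f))=\|K\|_{L^2}^2>0$ whenever $f'\not\equiv 0$, so $\{\mc A_t; t\ge 0\}$ is non-trivial; for $\alpha<\tfrac32$ the analogous computation with the particle-system normalization gives a vanishing limit, whereas here the field is fixed and the limit is positive.

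Finally, the energy-solution claim with $m=0$ is then immediate: stationarity of $\mc X$ gives the bound \eqref{USC}, the previous two paragraphs give \eqref{EE} and the existence of $\mc A_t$, and with $m=0$ the term $m\,\mc A_t(f)$ drops out of the martingale, so the required continuous martingale of quadratic variation $2\rho(1-\rho)\int_0^t\mc E(f_s)\,ds$ is precisely the martingale characterization satisfied by the stationary solution of the Ornstein--Uhlenbeck equation by definition. I expect the \textbf{main obstacle} to be the uniform-in-$\varepsilon$ control of the fourth-moment Fourier integral in the energy estimate: isolating the $\varepsilon^{\alpha-1}$ gain requires handling simultaneously the ultraviolet shell $|\eta|\sim 1/\varepsilon$, where the two smoothings disagree, and the infrared region, which is tamed only by the cancellation $\int f'=0$. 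Everything else is soft Gaussian analysis.
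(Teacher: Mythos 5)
Your proposal is correct in substance but takes a genuinely different route from the paper's. The paper's proof is two lines: it observes that for $m=0$ the whole proof of Theorem \ref{t1.6.2} can be rerun (the fluctuation field of the corresponding particle system converges to $\{\mc X_t; t\geq 0\}$), and that the uniform-in-$n$ two-blocks bound \eqref{ec3.6.1} then passes to the limit, giving the Energy Estimate for $\mc X$; non-triviality is left implicit in those discrete computations. You instead work entirely in the continuum, exploiting that $\mc X$ is an explicit stationary Gaussian field: the covariance $\chi\langle g_1, e^{(s_2-s_1)\mc L^{\rho}}g_2\rangle$, Wick's formula, the cancellation of the divergent mean through $\int f'=0$, and the dissipative factor $e^{-\tau c_\alpha|\xi|^\alpha}$ give $E[(\mc A^\varepsilon_{s,t}(f)-\mc A^\delta_{s,t}(f))^2]\leq C\varepsilon^{\alpha-1}(t-s)\|f'\|^2$, which at $\alpha=3/2$ reproduces exactly the $\sqrt{\varepsilon}\,(t-s)$ gain of \eqref{bound0}; combined with the crude bound $C(t-s)^2/\varepsilon$ and Proposition \ref{p1.6.1}, this yields $\mc A_t$ with the H\"older scale $(t-s)^{2-1/\alpha}$. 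What your approach buys: it is self-contained (no particle system, no Kipnis--Varadhan or spectral gap input), it works for all $\alpha\in(1,2)$ at once, and it makes non-triviality quantitative via the second-chaos identity $\mathrm{Var}(\mc A_{s,t}(f))=2\|K\|_{L^2}^2>0$, a point the paper does not spell out. What the paper's route buys: zero marginal cost, since all estimates are already proven, plus the identification of $\mc X$ as a scaling limit. Two points to tighten: (i) your bound $\varepsilon^{\alpha-1}(t-s)$ is not literally of the form $\kappa_0\varepsilon(t-s)^\beta$ in Definition \ref{def EE} --- though the paper has the same mismatch, as \eqref{ec3.6.1} reads $t\sqrt{\varepsilon}$ while the text later quotes $\varepsilon\sqrt{t}$, and either form suffices for the Cauchy property and for Proposition \ref{p1.6.1}; (ii) in the positivity argument the skew phase sits inside the limiting kernel $K$ and cannot be discarded by taking moduli there, so you should justify $K\not\equiv 0$ directly (it is explicit and nonvanishing near coincident times), after which the conclusion stands.
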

\begin{proof}
Note that in the case $m=0$, the proof of Theorem \ref{t1.6.2} can be adapted to prove convergence of the density fluctuation field to the process $\{\mc X_t; t \geq 0\}$. The computations following \eqref{ec3.6.1} show that the energy estimate \eqref{EE} holds for $\{\mc X_t; t \geq 0\}$, which shows the proposition.
\end{proof}
\begin{proposition}
\label{pafBGS}
Let $\{ \mc Y_t; t \geq 0\}$ be an energy solution of \eqref{ec1.5.3} with $\rho =1/2$. Then,
\begin{itemize}
\item[i)] The process
\[
\mc Z_t = \lim_{\epsilon \to 0} \int_0^t \mc Y_s(\iota_\epsilon(0)) ds
\]
is well defined,
\item[ii)] 
\[
\lim_{n \to \infty} \sqrt{n} \int_0^t \big( \eta_s^n(0)-1/2\big) ds = \mc Z_t
\]
in law,
\item[iii)] for any $\delta >0$ there are positive constants $c_0$, $C_0$ such that
\[
c_0 t^{4/3 -\delta} \leq E[\mc Z_t^2] \leq C_0 t^{4/3+\delta}.
\]
\end{itemize}
\end{proposition}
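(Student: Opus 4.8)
The plan is to distill from the martingale characterization of the energy solution a single quantitative estimate and then feed the test functions $\iota_\epsilon$ into it. Since $\rho=1/2$ the Galilean shift in \eqref{ec1.5.1} disappears, so $\eta_s^n(0)-\tfrac12=\bar\eta_s^n(0)$, and $\mc L^\rho=\mc L^{1/2}$ is self-adjoint with $\chi=\rho(1-\rho)=\tfrac14$. The workhorse I would prove first is
\[
E\Big[\Big(\int_0^t \mc Y_s(h)\,ds\Big)^2\Big]\le 2\chi\,t\,\<h,(-\mc L^{1/2})^{-1}h\>
\]
for every $h$ in the range of $\mc L^{1/2}$ with finite right-hand side. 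To obtain it I would combine the forward martingale $\mc M_t(f)$ of the energy solution with the backward martingale $\hat{\mc M}_t(f)$ coming from time reversal; the key structural input is that the time-reversed stationary process is again a stationary energy solution with $m$ replaced by $-m$, which I would establish as in \cite{GonJar1}. Adding the two identities for $f$ time independent, the boundary terms and the singular drift $m\mc A_t(f)$ cancel, leaving $\int_0^t\mc Y_s(\mc L^{1/2}f)\,ds=-\tfrac12(\mc M_t(f)+\hat{\mc M}_t(f))$; choosing $f=(\mc L^{1/2})^{-1}h$ (admissible after the $L^2$-extension of the martingale problem available under USC) and using that each martingale has quadratic variation $2\chi t\,\mc E(f)$ yields the bound.

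With this estimate, item i) is immediate: for $0<\delta<\epsilon<1$ the function $\iota_\epsilon-\iota_\delta$ integrates to zero, hence lies in the range of $\mc L^{1/2}$, and a Fourier computation gives $\<\iota_\epsilon-\iota_\delta,(-\mc L^{1/2})^{-1}(\iota_\epsilon-\iota_\delta)\>\to0$ as $\epsilon,\delta\to0$, the infrared singularity of the symbol $|k|^{-3/2}$ being harmless because the Fourier transform of $\iota_\epsilon-\iota_\delta$ vanishes at the origin. Thus $\{\int_0^t\mc Y_s(\iota_\epsilon)\,ds\}_\epsilon$ is Cauchy in $L^2(P)$ and $\mc Z_t$ is well defined. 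For the upper bound in iii) I cannot insert $\iota_\epsilon$ directly, since $\<\iota_\epsilon,(-\mc L^{1/2})^{-1}\iota_\epsilon\>$ diverges in the infrared; instead I regularize through the resolvent. With $g_\lambda=(\lambda-\mc L^{1/2})^{-1}\iota_\epsilon$ one has $\iota_\epsilon=\lambda g_\lambda-\mc L^{1/2}g_\lambda$, so
\[
\int_0^t\mc Y_s(\iota_\epsilon)\,ds=\lambda\int_0^t\mc Y_s(g_\lambda)\,ds-\int_0^t\mc Y_s(\mc L^{1/2}g_\lambda)\,ds .
\]
The second term is controlled by the workhorse by $2\chi t\,\mc E(g_\lambda)\sim t\lambda^{-1/3}$, and the first by stationarity and Cauchy--Schwarz by $\lambda^2t^2\chi\|g_\lambda\|^2\sim t^2\lambda^{2/3}$, with constants uniform in $\epsilon$ because the relevant Fourier integrals converge as $\epsilon\to0$. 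Optimizing at $\lambda\sim t^{-1}$ gives $E[\mc Z_t^2]\le C_0 t^{4/3}$, establishing the upper bound in iii) with exponent $4/3$.

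Item ii) is where the microscopic dynamics re-enters. At the particle level I would replace the single-site occupation $\sqrt n\,\bar\eta^n_s(0)$ by the mesoscopic box average $\mc Y^n_s(\iota_\epsilon)=\mc Y^n_s\ast\iota_\epsilon(0)$ (with $\iota_\epsilon=\tfrac1\epsilon\mathbf{1}_{(0,\epsilon]}$ this is $\sqrt n$ times the empirical density of $\bar\eta^n_s$ over a box of $\epsilon n$ sites). A one-step replacement is too expensive in the $H_{-1}$ norm, so this must be carried out by telescoping through dyadic boxes exactly as in Section \ref{s3.3}, each increment being estimated by Proposition \ref{p2.3.4}; this is legitimate because differences of block averages, unlike $\bar\eta(0)$ itself, have zero mean under every $\mu_\sigma$. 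Together with the convergence $\int_0^t\mc Y^n_s(\iota_\epsilon)\,ds\Rightarrow\int_0^t\mc Y_s(\iota_\epsilon)\,ds$ furnished by Theorem \ref{t1.6.2}, and then letting $\epsilon\to0$ via item i), this identifies the distributional limit of $\sqrt n\int_0^t(\eta_s^n(0)-\tfrac12)\,ds$ as $\mc Z_t$.

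The lower bound in iii) is the main obstacle, and I would argue it at the particle level and transport it through item ii). Writing $V^n_t=\sqrt n\int_0^t\bar\eta^n_s(0)\,ds$, the stationary Laplace-transform identity reads
\[
\int_0^\infty e^{-\lambda t}\,\bb E_n\big[(V^n_t)^2\big]\,dt=\frac{2n}{\lambda^2}\left\langle \bar\eta(0),(\lambda-n^\alpha L)^{-1}\bar\eta(0)\right\rangle_{\rho},
\]
and a Tauberian theorem converts a lower bound $\left\langle \bar\eta(0),(\lambda-n^\alpha L)^{-1}\bar\eta(0)\right\rangle_{\rho}\ge c\,n^{-1}\lambda^{-1/3+\delta'}$ into $\bb E_n[(V^n_t)^2]\ge c\,t^{4/3-\delta}$, which passes to the limit by item ii) and uniform integrability. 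The symmetric part of the generator already produces the correct order $\lambda^{-1/3}$, but the variational formula for non-reversible resolvents shows this is only an upper bound for the full resolvent; the real work is to bound the antisymmetric correction $\<\mc B f_\ast,(\lambda-\mc S)^{-1}\mc B f_\ast\>$, with $f_\ast=(\lambda-\mc S)^{-1}\bar\eta(0)$ and $\mc S$, $\mc B$ the symmetric and antisymmetric parts of $n^\alpha L$, by a strictly smaller power of $\lambda^{-1}$. This is a degree-two estimate on the Burgers drift, in the spirit of the known superdiffusivity lower bounds for asymmetric exclusion, and it is precisely where the $t^\delta$ loss is incurred; I expect this correction estimate, rather than any of the preceding steps, to be the crux of the proof.
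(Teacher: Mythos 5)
Your reconstruction only partially matches the paper's proof, which is essentially three citations: item i) is obtained by adapting Theorem 2.1 of \cite{GJCPAM}, whose stated input is the Energy Estimate \eqref{EE}; item ii) by adapting Theorem 2.5 of \cite{GJCPAM} starting from the uniform two-blocks bound \eqref{ec3.6.1} --- this is precisely your dyadic telescoping through Proposition \ref{p2.3.4} (whose mean-zero requirement you correctly observe is satisfied by differences of block averages) followed by the convergence of Theorem \ref{t1.6.2}, so your item ii) is a faithful reconstruction of the cited argument; and item iii) is imported from Theorem 2.14 of \cite{BerGonSet}, which proves the two-sided bounds $c_0t^{4/3-\delta}\leq \bb E_n[(V_t^n)^2]\leq C_0 t^{4/3+\delta}$ for the microscopic occupation functional $V_t^n=\sqrt n\int_0^t\bar\eta_s^n(0)\,ds$, transferred to $\mc Z_t$ through ii). Your intrinsic resolvent proof of the upper bound, giving $t^{4/3}$ with no $\delta$-loss directly at the level of the limit process, is genuinely different from the paper's transfer argument and would be stronger --- if your workhorse estimate were available.

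There are two genuine gaps. First, the workhorse bound rests on the claim that the time reversal of a stationary energy solution is again a stationary energy solution with $m$ replaced by $-m$, which you say you would ``establish as in \cite{GonJar1}''. This cannot be established for an arbitrary energy solution: the paper's definition consists only of USC, \eqref{EE} and the \emph{forward} martingale problem, and the reversal property is not a consequence of these axioms --- it is exactly the additional structural condition later added in \cite{GubPer} to obtain uniqueness, and nothing in \cite{GonJar1} derives it abstractly. It does hold for the limit points produced by Theorem \ref{t1.6.2} (reverse the particle dynamics and rerun the construction), so your argument covers those, but not a general energy solution as the proposition is stated. The route the paper cites avoids reversal: the a priori bound of Proposition \ref{p1.6.1}, $E[(\mc A_t(f)-\mc A_s(f))^2]\leq C|t-s|^{1+\beta/2}\|f'\|^2$ with $\beta=1/2$, controls the drift term directly inside the resolvent decomposition $\iota_\epsilon=\lambda g_\lambda-\mc L^{1/2}g_\lambda$, which is why the paper attributes item i) to the Energy Estimate rather than to any reversibility property.

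Second, the lower bound in iii) is not proved in your proposal: you correctly locate the difficulty (a lower bound on the non-reversible resolvent $\<\bar\eta(0),(\lambda-n^\alpha L)^{-1}\bar\eta(0)\>_\rho$, equivalently an upper bound on the antisymmetric correction by a strictly smaller power of $\lambda^{-1}$), but you explicitly defer it as ``the crux''. The paper does not prove it either --- it cites Theorem 2.14 of \cite{BerGonSet}, where exactly this two-sided occupation-time estimate for long-range exclusion at $\alpha=3/2$ is established by the kind of variational analysis you sketch --- so as a self-contained argument the proposal is incomplete at its most consequential point. A minor further omission: transferring the microscopic lower bound to $E[\mc Z_t^2]$ through convergence in law requires uniform integrability of $(V_t^n)^2$ (say, uniform fourth-moment bounds), which you invoke but do not supply; the upper bound, by contrast, would pass through Fatou's lemma for free.
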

\begin{proof}
The first statement is a consequence of the Energy Estimate \eqref{EE}; the proof of Theorem 2.1 in \cite{GJCPAM} can be readily adapted to our situation. Starting from \eqref{ec3.6.1}, the proof of Theorem 2.5 in \cite{GJCPAM} can be adapted to prove the convergence stated in the second statement. The third statement is a consequence of this convergence combined with Theorem 2.14 in \cite{BerGonSet}.
\end{proof}
Proposition \ref{pfOU} implies that $\{\mc X_t; t \geq 0\}$ is {\em not} an energy solution of \eqref{ec1.5.3} for any $m \neq 0$, and therefore the process $\{\mc Y_t; t \geq 0\}$ is not an Ornstein-Uhlenbeck process. Proposition \ref{pafBGS} implies that the process $\{\mc Y_t; t \geq 0\}$ has a non-trivial time evolution, since otherwise the variance of $\mc Z_t$ should grow linearly in time.
Now that we have shown that the process $\{\mc Y_t; t \geq 0\}$ is non-trivial, we discuss its relation with the KPZ fixed point. 
It has been recently proved that the fractional operator $\mc L^\rho$ with $\alpha =3/2$ appears in the scaling limit of fluctuations of one-dimensional conservative systems \cite{BerGonJar, JarKomOll}. The fractional operator appears as the outcome of a {\em Dirichlet-to-Neumann} map for the solution of a degenerate Laplace equation in the half-plane. For these models, the scaling limit of energy fluctuations are given by equation \eqref{ec0.1}. In \cite{JarKomOll}, the scaling limit involves the symmetric operator $\mc L^{1/2}$, while in \cite{BerGonJar} the operator $\mc L^\rho$ has maximal skewness. According to Spohn's Fluctuating Hydrodynamics approach to anomalous heat conduction in dimension $d=1$, equation \eqref{ec0.1} should be universal, describing energy fluctuations around stationary states of {\em zero pressure}. The same approach predicts KPZ fluctuations in other regions of phase space. 
We propose the following conjecture:
\begin{conjecture}
There is at most one stationary energy solution of \eqref{ec1.5.3}. Let $\{\mc Y_t^{m,\rho}; t \geq 0\}$ denote this unique solution. Then,
\[
\lim_{m \to 0} \mc Y_t^{m,\rho} = \mc X_t^\rho,
\]
solution of \eqref{ec1.4.1}, and the limit
\[
\lim_{m \to +\infty} \mc Y_{t/m}^{m,1/2} 
\]
exists and coincides with the KPZ fixed point.
\end{conjecture}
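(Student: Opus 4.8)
The plan is to treat the three assertions separately, since they are of very different difficulty, and to be honest that the whole statement is conjectural: the first part (uniqueness) is the crux, and conditioning on it makes the second part tractable while the third remains essentially out of reach. For uniqueness I would try to set up a closed, well-posed martingale problem for stationary energy solutions of \eqref{ec1.5.3}, in the spirit of \cite{GubPer}. The idea is to exploit stationarity to give meaning not only to the forward martingale $\mc M_t(f)$ but also to the time-reversed (backward) martingale, so that the pair $(\mc Y_t,\mc A_t)$ solves a forward--backward system in which the drift field $\mc A_t$ of Proposition \ref{p1.6.1} is controlled through an \emph{It\^o trick}; one would then hope to propagate a Gaussian comparison (against the fractional Ornstein--Uhlenbeck law) to show the law of $\mc Y$ is determined. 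The decisive difficulty, already flagged in the introduction, is that at $\alpha=3/2$ the equation is \emph{critical}: the nonlinearity $m\nabla\mc Y_t^2$ scales exactly like the linear part $(\mc L^\rho)^\ast\mc Y_t$, so the Cole--Hopf linearization to the stochastic heat equation that underlies \cite{GubPer} is unavailable and there is no subcritical gain with which to run a fixed-point or perturbative argument. I expect this to be the main obstacle of the entire conjecture.

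For the limit $m\to 0$ I would argue conditionally on uniform-in-$m$ tightness of the family $\{\mc Y_t^{m,\rho}\}_m$ in $\mc D([0,T];\bb S'(\bb R))$. Every $\mc Y^{m,\rho}$ is stationary of the same variance $\rho(1-\rho)$, hence USC with a common constant in \eqref{USC}, and by the computations following \eqref{ec3.6.1} it satisfies the Energy Estimate \eqref{EE} with constants $\kappa_0,\beta$ independent of $m$; consequently the limiting drift $\mc A_t$ is bounded in $L^2$ uniformly in $m$. Therefore the nonlinear contribution $m\,\mc A_t(f)$ vanishes in $L^2$ as $m\to 0$, so every limit point solves the martingale problem of the fractional Ornstein--Uhlenbeck equation \eqref{ec1.4.1}, and by the uniqueness criterion of Proposition \ref{p1.4.3} this limit is the stationary process $\mc X_t^\rho$ of Proposition \ref{pfOU}, giving $\lim_{m\to 0}\mc Y_t^{m,\rho}=\mc X_t^\rho$. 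The only genuinely new input is the uniform tightness bound, which I would extract from the uniform USC and EE estimates together with the martingale decomposition, exactly as in the proof of Theorem \ref{t1.6.2}.

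The limit $m\to+\infty$ is where I expect no accessible route with present technology, and it is the place where the conjecture reaches into the strong KPZ universality problem. The rescaling $\mc Y_{t/m}^{m,1/2}$ is chosen so that, by the $1:2:3$ scale invariance of Theorem \ref{papa}, it is again a stationary energy solution, but with the linear operator carrying a coefficient that degenerates as $m\to\infty$; heuristically the surviving object solves a driftless, purely nonlinear fractional Burgers equation, the candidate KPZ fixed point. The obstruction is twofold. First, one has no intrinsic characterization of the limit: unlike \eqref{ec1.5.3}, the purported fixed point is not presented as the solution of a martingale problem whose well-posedness we control, so even identifying limit points of $\mc Y_{t/m}^{m,1/2}$ is problematic. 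Second, criticality again forbids treating the vanishing linear part as a perturbation, so relative compactness plus identification does not close. A realistic program would require, in order, proving the uniqueness in the first part, establishing uniform moment and energy bounds along the scaling \eqref{uva} to extract subsequential limits, and finally an independent infinite-volume construction of the KPZ fixed point against which to match them; each of these steps is, as far as I can see, open.
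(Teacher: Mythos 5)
There is nothing in the paper to compare your argument against: the statement is explicitly a \emph{Conjecture} (Section \ref{s6.1}), and the paper proves none of its three assertions. You correctly recognized this, and your diagnosis of the central obstruction coincides with the paper's own: at $\alpha=3/2$ the equation \eqref{ec1.5.3} is critical, the nonlinearity scales exactly like the linear part, and the introduction states outright that the methods of \cite{GubPer} (and, in its current form, the theory of regularity structures) do not extend to this case. What the paper offers in place of a proof is supporting evidence of a different kind: Theorem \ref{papa} (formal $1{:}2{:}3$ scale invariance of the solution set), Proposition \ref{pfOU} (the fractional Ornstein--Uhlenbeck process $\mc X_t^\rho$ is itself an energy solution with $m=0$, which makes the $m\to0$ assertion at least consistent), Proposition \ref{pafBGS} (the variance of $\mc Z_t$ grows like $t^{4/3\pm\delta}$, ruling out trivial time evolution), and two heuristic local mechanisms producing the fractional terms. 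Your program is compatible with, but disjoint from, this evidence.

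Your conditional argument for the $m\to 0$ limit is the one substantive addition, and it is essentially sound as a program, with two caveats you should make explicit. First, the uniform-in-$m$ Energy Estimate is not automatic for \emph{abstract} energy solutions: in Definition \ref{def EE} the constants $\kappa_0,\beta$ are existential and a priori $m$-dependent. The uniform bound $\kappa_0=c(\rho)$, $\beta=\tfrac12$ comes from the particle-system construction via \eqref{ec3.6.1}, whose constants depend only on the symmetric part of the rates (through the spectral gap), so your claim is legitimate for the solutions produced by Theorem \ref{t1.6.2} --- but invoking it for \emph{the} solution $\mc Y^{m,\rho}$ already presupposes the uniqueness you conditioned on, which you do acknowledge. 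Second, to pass the martingale problem to the limit and then apply Proposition \ref{p1.4.3} you need uniform integrability of $\mc M_t^{m}(f)^2$; the EE and Proposition \ref{p1.6.1} control only second moments of $\mc A_t(f)$, so a fourth-moment (or hypercontractivity-type) input, absent from the paper, would have to be supplied. On the $m\to+\infty$ limit your assessment --- no intrinsic characterization of the limit object, and no perturbative handle because of criticality --- matches the paper's stance; note also the paper's remark in \cite{CorQuaRem}-related discussion that even the existence of the proposed KPZ fixed point is not rigorously established, which is precisely your ``no object to match against'' obstruction.
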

The main issue regarding this conjecture is that we derive the fractional Burgers equation as the scaling limit of a non-local interacting particle system, in despite of the local systems which form the basin of attraction of the KPZ fixed point. In order to make this conjecture more reasonable, we explain two local mechanisms which lead to the production of the fractional terms appearing in \eqref{ec1.5.3}:
\begin{itemize}
\item
In \cite{BerGonJar, JarKomOll}, the fractional operator $\mc L^\rho$ appears as the outcome of a {\em Dirichlet-to-Neumann} map for the solution of a degenerate Laplace equation in the half-plane, which is a local, two-dimensional problem. A similar Laplace problem can be set for the asymmetric, simple exclusion process using the degree-preserving part $\mc A_0$ of the generator in the generalized duality decomposition (see (5.16) and (5.17) in \cite{KomLanOll}). This degree-preserving part has a factor $1-2\rho$ in front of it, and therefore it vanishes at $\rho=1/2$. The limit $m \to \infty$  here corresponds heuristically to take a density  $\rho \to1/2$.
 \item
The system of equations
\[
\left\{
\begin{array}{r@{=}l}
d \mc Y_t^\epsilon & \epsilon \Delta \mc Y_t^\epsilon dt + \epsilon^{-1/3} \nabla \mc Y_t^\epsilon dt + \sqrt{\epsilon} d \mc W_t,\\
d \mc Z_t^\epsilon & \nabla (\mc Y_t^\epsilon)^2
\end{array}
\right.
\]
has the striking property that solutions remain bounded in $L^2$. In fact, it can be checked that, at least heuristically, as $\epsilon \to 0$ and then $t \to \infty$ the process $\mc Z_t^\epsilon$ can be well approximated by a noise with the same law of $\sqrt{(-\mc L^{1/2})}d \mc W_t$. The exact meaning of the divergent transport term and the connection with the KPZ fixed point remains unclear.
\end{itemize}
\subsection{Weakly (a)symmetric systems}
\label{s6.2} In \cite{GubJar}, a family of fractional Burgers equations was introduced. More precisely, the concept of stationary energy solutions of
\begin{equation}
\label{fracBurgers}
d \mc Y_t = (\mc L^{1/2}) \mc Y_t dt +\lambda \nabla \mc Y_t^2 dt + \sqrt{-\mc L^{1/2}} d \mc W_t
\end{equation}
was introduced, although in finite volume. Existence was shown for $\alpha >1$ and uniqueness was shown for $\alpha >10/4$. Introducing {\em weak} (a)symmetries into the system, we can obtain these equations as scaling limits of long-range exclusion processes. More precisely, consider the family of transition rates $\{p_n(\cdot); n \in \bb N\}$ given by
\[
p_n(z) = \frac{c}{|z|^{1+\alpha}} + \lambda n^{3/2-\alpha} \mathbf{1}(z=1).
\]
For $\alpha >3/2$ this model is {\em weakly asymmetric} in the sense that the asymmetric part of the rate vanishes, as $n \to \infty$, and for $\alpha < 3/2$ this model is {\em weakly symmetric} in the sense that the asymmetric part of the transition rate grows to infinity, as $n \to \infty$. For $\alpha =3/2$, the asymmetric and symmetric parts of the transition rate are perfectly balanced. The interested reader may verify that the proof of Theorem \ref{t1.6.2} can be carried out for  this family of transition rates, and the result stated there holds for the fractional Burgers equation  \eqref{fracBurgers}.
\subsection{Normal domains of attraction}
\label{s6.3}
We say that a transition rate $p(\cdot)$ is in the {\em normal domain of attraction} of an $\alpha$-stable law if there exist constants $c^+, c^- \geq 0$ such that $c^++c^->0$ and
\[
\lim_{x \to +\infty} x^\alpha \sum_{\pm y  \geq x} p(y) = c^{\pm}.
\]
It is well known, see \cite{BorIbra} and \cite{Brei}, that the random walk with transition rate $p(\cdot)$ converges to a non-trivial Markov process under the scaling of Proposition \ref{p1.1.2} if, and only if, $p(\cdot)$ belongs to the domain of normal attraction of an $\alpha$-stable law. If the transition rate $p(\cdot)$ is symmetric, it can be checked that Theorem \ref{t1.5.1} holds for any $\alpha \in (0,2)$. Note that in this case the process $A_t^n(f)$ is identically null. In the case of non-symmetric transition rates $p(\cdot)$, the model is truly non-linear and we need the full power of Proposition \ref{p2.3.4} in order to handle $A_t^n(f)$. In order to prove Proposition \ref{p2.3.4} we need to prove the corresponding spectral gap inequality. It turns out that this is a non-trivial question, which is answered in \cite{Jar2}. With the spectral gap inequality at our disposal, we can check whether the proofs of Theorems \ref{t1.5.1} and \ref{t1.6.2} can be generalized to transition rates on the normal domain of attraction of $\alpha$-stable laws. It turns out that the proofs can be generalized without any extra assumption on the {\em symmetric} part $s(\cdot)$ of the rate $p(\cdot)$. However, some additional technical condition on the asymmetric part $a(\cdot)$ of the transition rate is needed to repeat the proof. This technical condition is not general enough in order to handle arbitrary transition rates on the normal domain of attraction of an $\alpha$-stable law, but a huge class of them. The proof becomes extremely technical without adding any insight on the models, and therefore we decided to omit it.

\subsection{Generalization to other models}
\label{s6.4}
A natural question related to the  universality property  is whether the results of these notes can be generalized to other models. The scheme of proof presented here can be applied for models in which the symmetric part of the dynamics satisfies the {\em gradient condition} with {\em local functions}. Roughly speaking, a model satisfies the gradient condition if the current of particles between two sites $x, y$ can be written as $(\tau_y -\tau_x)h$, where $h$ is a local function and $\tau_x, \tau_y$ are the shifts of  $x, y$. In the exclusion process, the current is equal to $\eta(y) -\eta(x)$, so the gradient condition is satisfied. It is very difficult to find interacting particle systems satisfying this property. In \cite{Jar1} it is observed that the zero-range process also satisfies this property, which is used to obtain the hydrodynamic limit of such a model. More examples can be constructed using the family of {\em misanthrope processes} introduced in \cite{Coc}, but even among this class of models, the gradient condition is very restrictive.
In the context of stochastically perturbed Hamiltonian dynamics it is very easy to construct models for which the techniques of these notes would allow to prove similar results. Just to give a simple example, consider the Markovian dynamics in $\bb R^{\bb Z}$ generated by $L = S +A$, where
\[
S = \sum_{x,y } s(y-x) e^{\frac{1}{2} \sum_z \eta(z)^2} \big(\partial_y -\partial_x\big) e^{-\frac{1}{2} \sum_z \eta(z)^2} \big(\partial_y-\partial_x\big),
\]
\[
A = \sum_{x,y} a(y-x) e^{\frac{1}{2} \sum_z \eta(z)^2} \big(\tau_yb\partial_y-\tau_xb\partial_x\big) e^{-\frac{1}{2} \sum_z \eta(z)^2},
\]
where $b$ is some local function. For this dynamics, the spectral gap over boxes of finite sites is well understood \cite{Cap, LanPanYau}, and product invariant measures are readily guaranteed by the construction of the dynamics.
\section*{Acknowledgements}

\noindent
P.G. thanks FCT/Portugal for support through the project 
UID/MAT/04459/2013.  
M.J. was supported by FAPERJ through the grant E-26/103.051/2012 ``Jovem Cientista do Nosso Estado''. M.J.~was partially supported by NWO Gravitation Grant 024.002.003-NETWORKS.

This project has received funding from the European Research Council (ERC) under  the European Union's Horizon 2020 research and innovative programme (grant agreement   No 715734). 

This work benefited from the support of the project EDNHS ANR-14-CE25-0011 of the French National Research Agency (ANR).

\appendix

\section{The spectral gap inequality} \label{ap:A}

A classical problem in the theory of Markov chains is the study of the time that the chain needs to reach the equilibrium. In the case of a (continuous time) finite state ergodic Markov chain it is known that the convergence to equilibrium is exponentially fast. Therefore the relevant question is the exponential rate at which this happens. Let $\{x(t); t \geq 0\}$ be an ergodic Markov chain on a finite state space $E$. Let $\mu$ be its unique invariant measure. For $f: E \to \bb R$ and $x \in E$, let $P_t f(x) = \bb E [ f(x(t))| x(0)=x]$. Let $\<\cdot\>_\mu$ denote the expectation with respect to $\mu$. Then we define
\[
\lambda = -\!\! \sup_{f:E \to \bb R} \limsup_{t \to \infty} \frac{1}{t} \log (\big\| P_t f- \<f\>_\mu\big\|_{L^2(\mu)}).
\]
The number $1/\lambda$ is known as the {\em relaxation time} of the chain $\{x(t); t \geq 0\}$. In the case on which the chain $\{x(t); t \geq 0\}$ is reversible with respect to $\mu$, the number $\lambda$ is equal to the {\em spectral gap} of the generator $A$ of the chain $\{x(t); t \geq 0\}$, that is, the absolute value of the largest non-zero eigenvalue of $A$. In that case, we have the variational formula
\begin{equation}
\label{SG}
\lambda^{-1} = \sup_{\<f\>_\mu=0} \frac{\<f^2\>_\mu}{\<-fAf\>_{\mu}}.
\end{equation}
When the chain is not reversible, this variational formula provides an upper bound for $\lambda^{-1}$.  A natural question in the theory of Markov chains is to estimate the spectral gap of a Markov chain, or of a family of Markov chains of increasing complexity.

For the symmetric simple random walk restricted to $\{1,\dots,n\}$ it is well known that $\lambda^{-1} = \mc O(n^2)$. It turns out that this property of the random walk over finite intervals, by means of a computation of Nash type, allows one to show that in the case of the symmetric simple random walk on $\bb Z$,
\[
\big\| P_t f - \< f\>_\mu\big\|_{L^2(\mu)} = o\big(\tfrac{1}{t^a}\big) \text{ for any } a < \tfrac{1}{2},
\]
where $\mu$ is the counting measure on $\bb Z$.
Therefore a sharp upper bound on the spectral gap of finite-state Markov chains gives valuable information even in the case of chains on infinite state spaces.

Consider the symmetric random walk restricted to the set $\Lambda_\ell = \{1,\dots,\ell\}$. In the case where the random walks have long jumps we have the following result:

\begin{proposition}
\label{p1.3.1}
Let $p(\cdot)$ be given by \eqref{ec1}. There exists $\kappa>0$ such that
\begin{equation}
\sum_{x \in \Lambda_\ell} f(x)^2 \leq \kappa \ell^\alpha \sum_{x,y \in \Lambda_\ell} p(y-x) \big(f(y)-f(x)\big)^2
\end{equation}
for any $\ell \in \bb N$ and any $f: \Lambda_\ell \to \bb R$ such that
\begin{equation}\label{eq1}
\sum_{x \in \Lambda_\ell} f(x) =0.
\end{equation}
\end{proposition}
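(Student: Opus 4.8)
The plan is to observe that, for this long-range model, the spectral gap inequality is essentially immediate and does \emph{not} require the summation-by-parts / Nash-type argument needed for the nearest-neighbour walk discussed at the start of this appendix. The reason is structural: because the jumps are long-range, every pair of sites in $\Lambda_\ell$ is directly linked by a transition whose symmetrized rate is bounded below, uniformly over the interval, by a constant multiple of $\ell^{-(1+\alpha)}$, which is exactly the order needed to produce the $\ell^\alpha$ relaxation time.

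First I would symmetrize the Dirichlet form. Since $\big(f(y)-f(x)\big)^2$ is symmetric in $x$ and $y$, grouping each ordered pair with its transpose and recalling \eqref{ec1} gives
\[
\sum_{x,y\in\Lambda_\ell} p(y-x)\big(f(y)-f(x)\big)^2 = \sum_{x,y\in\Lambda_\ell} s(y-x)\big(f(y)-f(x)\big)^2,
\]
where $s(y-x)=\tfrac{c^++c^-}{2|y-x|^{1+\alpha}}$. Note that $s$ is \emph{strictly} positive for $x\neq y$ precisely because $c^++c^->0$; hence the argument goes through even in the degenerate cases $c^+=0$ or $c^-=0$, where $p$ itself is supported on a single half-line.

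Next, since any two distinct sites of $\Lambda_\ell=\{1,\dots,\ell\}$ are at distance at most $\ell-1<\ell$, one has the uniform lower bound $s(y-x)\ge \tfrac{c^++c^-}{2\ell^{1+\alpha}}$ for all $x\neq y$ in $\Lambda_\ell$. Combined with the elementary identity, valid under the constraint \eqref{eq1},
\[
\sum_{x,y\in\Lambda_\ell}\big(f(y)-f(x)\big)^2 = 2\ell\sum_{x\in\Lambda_\ell} f(x)^2 - 2\Big(\sum_{x\in\Lambda_\ell}f(x)\Big)^2 = 2\ell\sum_{x\in\Lambda_\ell}f(x)^2,
\]
this yields
\[
\sum_{x,y\in\Lambda_\ell} p(y-x)\big(f(y)-f(x)\big)^2 \ge \frac{c^++c^-}{2\ell^{1+\alpha}}\cdot 2\ell\sum_{x\in\Lambda_\ell}f(x)^2 = \frac{c^++c^-}{\ell^{\alpha}}\sum_{x\in\Lambda_\ell}f(x)^2,
\]
which is the claimed inequality with the explicit constant $\kappa=(c^++c^-)^{-1}$.

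The point worth stressing --- rather than a genuine obstacle --- is that the long jumps trivialize what is delicate in the finite-range setting. For the nearest-neighbour walk the variance is controlled only through differences across unit distance, which forces the $\ell^2$ scaling and a telescoping estimate; here the macroscopic squared differences (at scale of order $\ell$) already enter the Dirichlet form, weighted by a rate of the exact order $\ell^{-(1+\alpha)}$. The only thing one should check is that replacing every $s(y-x)$ by its minimum over $\Lambda_\ell$ loses nothing \emph{at the level of the order in $\ell$}; this is clear, since the dominant contribution to $\sum_{x,y}\big(f(y)-f(x)\big)^2$ comes from pairs at distance of order $\ell$, for which the true rate is already comparable to the minimum.
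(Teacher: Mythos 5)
Your proof is correct and follows essentially the same route as the paper's: symmetrize the Dirichlet form to replace $p$ by $s$, use the identity $\sum_{x,y\in\Lambda_\ell}\big(f(y)-f(x)\big)^2=2\ell\sum_{x\in\Lambda_\ell}f(x)^2$ under the mean-zero constraint \eqref{eq1}, and apply the uniform lower bound $s(y-x)\ge\tfrac{c^++c^-}{2\ell^{1+\alpha}}$ on $\Lambda_\ell$. As a minor remark, your symmetrization identity (without an extra factor of $2$) is the correct one for sums over ordered pairs; the factor $2$ appearing in the paper's displayed version is a typo, and the diagonal terms cause no trouble since they vanish on both sides.
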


\begin{remark}
This proposition is telling us that the spectral gap of a Markov chain with jump rates given in \eqref{ec1} and defined on the  interval $\Lambda_\ell$, is bounded from below by $\frac{1}{\kappa \ell^\alpha}$. In addition, pairing together the two terms involving $x$ and $y$, we see that only the behavior of the symmetric part of $p(\cdot)$ is relevant for this proposition.
\end{remark}

The proof of this proposition is in fact very simple.  For that purpose note that \begin{equation}
\sum_{x,y \in \Lambda_\ell} p(y-x) \big(f(y)-f(x)\big)^2=\sum_{x,y \in \Lambda_\ell}2s(y-x)\big( f(y) -f(x) \big)^2.
\end{equation}
To conclude, use the fact that
 for $f$ satisfying \eqref{eq1}, it holds:
\[
\sum_{x,y \in \Lambda_\ell} \big( f(y) -f(x) \big)^2 = 2 \ell \sum_{x \in \Lambda_\ell} f(x)^2,
\]
together with $s(y-x) \geq \frac{c^++c^-}{2 \ell^{1+\alpha}}$ for any $x,y \in \Lambda_\ell$.

As a corollary of Proposition \ref{p1.3.1} we can obtain a lower bound for the spectral gap of the exclusion process with transition rate $p(\cdot)$:

\begin{corollary}
\label{c1.3.2}
Let $p(\cdot)$ be defined by \eqref{ec1}. Let $f: \Omega \to \bb R$ be a local function with $\supp(f) \subseteq \Lambda_\ell$. Assume that $\int f d\mu_\sigma = 0$ for any $\sigma \in [0,1]$. Then,
\[
\int f^2 d\mu_\sigma \leq \kappa \ell^\alpha \sum_{x,y \in \Lambda_\ell} p(y-x) \int \big(\nabla_{x,y} f\big)^2 d\mu_\sigma
\]
for any $\sigma \in [0,1]$.
\end{corollary}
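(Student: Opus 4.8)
The plan is to mirror the elementary proof of Proposition \ref{p1.3.1}, replacing the trivial complete-graph identity used there by the spectral gap of the Bernoulli--Laplace model. Two preliminary reductions make this possible. First, since $(\nabla_{x,y}f)^2 = (\nabla_{y,x}f)^2$, relabelling $x \leftrightarrow y$ in the sum on the right-hand side gives $\sum_{x,y \in \Lambda_\ell} p(y-x)\int (\nabla_{x,y}f)^2 d\mu_\sigma = \sum_{x,y \in \Lambda_\ell} s(y-x)\int(\nabla_{x,y}f)^2 d\mu_\sigma$, so only the symmetric rate $s(\cdot)$ enters, exactly as in the Remark following Proposition \ref{p1.3.1}.

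Second, I would reduce to fixed particle number. Write $N(\eta) = \sum_{x \in \Lambda_\ell}\eta(x)$ and let $\mu_{\ell,k}$ denote $\mu_\sigma$ conditioned on $\{N=k\}$, i.e. the uniform measure on configurations of $\Lambda_\ell$ with exactly $k$ particles. The hypothesis $\int f\, d\mu_\sigma = 0$ for every $\sigma$ reads $\sum_{k=0}^\ell \sigma^k(1-\sigma)^{\ell-k}\sum_{N(\eta)=k}f(\eta)=0$; since the Bernstein polynomials $\sigma^k(1-\sigma)^{\ell-k}$ are linearly independent, each inner sum vanishes, so $\int f\, d\mu_{\ell,k}=0$ for all $k$. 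Because the swaps $\eta\mapsto\eta^{x,y}$ preserve $N$, both $\int f^2 d\mu_\sigma$ and $\sum_{x,y}s(y-x)\int(\nabla_{x,y}f)^2 d\mu_\sigma$ split as $\sum_k \mu_\sigma(N=k)(\cdots)$ over the canonical measures $\mu_{\ell,k}$. Hence it suffices to prove the inequality with $\mu_\sigma$ replaced by each $\mu_{\ell,k}$.

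The heart of the argument is the bound on a single canonical ensemble. Exactly as in Proposition \ref{p1.3.1}, for $x,y\in\Lambda_\ell$ one has $|y-x|\leq \ell$, hence $s(y-x)\geq \frac{c^++c^-}{2\ell^{1+\alpha}}$, which yields
\[
\sum_{x,y\in\Lambda_\ell} s(y-x)\int(\nabla_{x,y}f)^2 d\mu_{\ell,k} \;\geq\; \frac{c^++c^-}{2\ell^{1+\alpha}}\sum_{x,y\in\Lambda_\ell}\int(\nabla_{x,y}f)^2 d\mu_{\ell,k}.
\]
The sum on the right is the Dirichlet form of the all-pairs symmetric exclusion dynamics, namely the Bernoulli--Laplace model on $\ell$ sites with $k$ particles. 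I would then invoke the classical fact that its spectral gap is of order $\ell$, uniformly in $k$: there is an absolute constant $c_0$ with $\int f^2 d\mu_{\ell,k}\leq \frac{c_0}{\ell}\sum_{x,y\in\Lambda_\ell}\int(\nabla_{x,y}f)^2 d\mu_{\ell,k}$ for every $f$ with $\int f\, d\mu_{\ell,k}=0$. Combining the last two displays gives $\int f^2 d\mu_{\ell,k}\leq \frac{2c_0}{c^++c^-}\,\ell^\alpha \sum_{x,y}s(y-x)\int(\nabla_{x,y}f)^2 d\mu_{\ell,k}$, which, after undoing the two reductions, is precisely the claim with $\kappa = \frac{2c_0}{c^++c^-}$.

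The only non-elementary input is the uniform-in-$k$ Bernoulli--Laplace spectral gap, and this is where I expect the real difficulty to sit: once the minimal single-site rate is bounded below by its natural order $\ell^{-(1+\alpha)}$, the long-range structure of $p(\cdot)$ contributes nothing beyond Proposition \ref{p1.3.1}, and everything reduces to controlling the many-particle interaction on the complete graph. That mean-field estimate is classical (Diaconis--Shahshahani); alternatively, if one prefers to remain self-contained, one can route each all-pairs swap through a fixed family of admissible moves and run a Diaconis--Saloff-Coste comparison, at the cost of a harmless extra absolute constant.
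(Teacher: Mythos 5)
Your proof is correct, but it takes a genuinely different route from the paper's. The paper disposes of this corollary in two lines: it invokes Aldous' conjecture, proved in \cite{CapLigRit}, according to which the spectral gap of the exclusion process with symmetric rates \emph{equals} that of the single-particle random walk with the same rates, so the bound of Proposition \ref{p1.3.1} transfers verbatim (with the same $\kappa$); it also points to a comparison-principle proof in \cite{Jar2}. You bypass that deep theorem entirely. Your symmetrization $p\to s$ and your reduction to canonical ensembles via linear independence of the Bernstein polynomials are both correct --- and the latter step, needed to convert the hypothesis $\int f\,d\mu_\sigma=0$ for all $\sigma$ into mean zero on each hyperplane $\{N=k\}$, is left entirely implicit in the paper, so spelling it out is a genuine improvement in rigor. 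Your key substitute is the pointwise bound $s(y-x)\geq \tfrac{c^++c^-}{2\ell^{1+\alpha}}$ on all pairs of $\Lambda_\ell$, which dominates the Dirichlet form from below by that of the Bernoulli--Laplace (complete-graph exclusion) model, whose spectral gap of order $\ell$, uniform in the particle number $k$, is indeed classical (Diaconis--Shahshahani); this reproduces the correct order $\ell^\alpha$ at the cost of an absolute constant. The trade-off is as follows. Your argument is more elementary --- the only nontrivial input is a mean-field eigenvalue computation rather than the theorem of \cite{CapLigRit} --- but it exploits crucially that the rates \eqref{ec1} are bounded below on \emph{every} pair of $\Lambda_\ell$; it would break for the generalization of Section \ref{s6.3} to rates in the normal domain of attraction of an $\alpha$-stable law, where $s(z)$ may vanish at some $z$ and the complete-graph comparison fails, whereas the Aldous'-conjecture route still transfers whatever single-particle gap holds. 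Your constant $\kappa=\tfrac{2c_0}{c^++c^-}$ also differs from the paper's, which is harmless since the corollary is only used through Proposition \ref{p2.3.4}, i.e.\ up to constants.
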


The simplest way to prove this corollary is by means of the {\em Aldous' conjecture}, proved in \cite{CapLigRit}, which says that the spectral gap of an exclusion process with {\em symmetric} rates is equal to the spectral gap of the random walk with the same rates. Another proof using a comparison principle can be found in \cite{Jar2}.

\section{Proof of Proposition \ref{p2}}\label{proofprogen}

We do the proof for the case $\alpha>1$, the others being analogous.
The proof of the proposition is elementary, but very tedious. Recall the definition of $\mathcal{L}_n$ and $\mathcal{L}$ from \eqref{ec'2} and \eqref{ec2}, respectively. First note that by the definition of $m_n^\alpha$ in \eqref{mnalpha}, for this regime of $\alpha$
we rewrite 
$\mathcal{L}_n f(\tfrac{x}{n}) = n^\alpha \sum_{y} p(y)\psi_{x/n}(\tfrac{y}{n})$ and $\mc{L} f(x) = \int_\bb R p(y) \psi_x(y)dy
$
where $
\psi_{u}(v) = f(u+v)-f(u)-vf'(u),$ for $f\in \mc C^2(\bb R)$.
Second, note that
$\mc{L}_n^+ f(\tfrac{x}{n}) = n^\alpha \sum_{y \geq 1} p(y)\psi_{x/n}(\tfrac{y}{n})$ and 
$
\mc{L}^+ f(x) = \int_0^\infty \frac{c^+}{y^{1+\alpha}} \psi_x(y)dy
$
are well-defined and that it is enough to show \eqref{resprop2.2} for $\mc{L}_n^+$ and $\mc{L}^+$. For $x \in \bb N$, define $P(x) = \sum_{y \geq x} p(y)$ and $a(x) = x^\alpha P(x)-\frac{c^+}{\alpha}$. Note that $a(x)$ tends to $0$, as $x\to\infty$. The idea is to perform an integration by parts in the formula of $\mc L_n^+f$ in order to work with the more regular object $P(\cdot)$. By writing $p(y)=P(y)-P(y+1)$, performing a summation by parts and a Taylor expansion on $\psi$, we see that $\mc L_n^+f(\tfrac{x}{n}) = n^{\alpha-1} \sum_{y\geq 1} P(y)\psi_{x/n}'(\tfrac{y}{n})+R_1^n(x)$,
where $R_1^n(x)$ is an error term which satisfies $|R_1^n(x)| \leq \|\psi''_{x/n}\|_\infty n^{\alpha-2} \sum_{y \geq 1} P(y).$
Note that $\|\psi_{x/n}'\|_\infty {\leq 2\|f'\|_\infty}$, which does not depend on $x$.
This last sum is equal to $\sum_{y \geq1} y p(y)<+\infty$ and since $\alpha<2$, $R^1_n(x)$ vanishes, as $n\to\infty$. For $y\geq 1$, let $A(y) = \sup_{z \geq y} |a(z)|$.
We have that 
\begin{equation}\label{useeq}
\mc L^+_nf(\tfrac{x}{n})= n^{\alpha-1} \sum_{y\geq 1} \frac{c^+}{\alpha y^\alpha}\psi_{x/n}'(\tfrac{y}{n})+n^{\alpha-1} \sum_{y\geq 1} \frac{a(y)}{y^\alpha}\psi_{x/n}'(\tfrac{y}{n}).
\end{equation}
Note that $\psi_u'$ is bounded and that $\psi_u(0)=\psi'_u(0)=0$. Therefore, there exists a constant $K$ such that $|\psi_u'(v)| \leq Kv$ for any $v\in [0,1]$ and such that $|\psi_u'(v)| \leq K$ for any $v >1$. In fact, we can choose $K=\max\{2\|f'\|_\infty, \|f''\|_\infty\}$. Therefore, the second term on the right hand side of \eqref{useeq} is bounded by
\begin{equation*}
\begin{split}
&\Big|n^{\alpha-1}\sum_{y=1}^n \frac{a(y)}{y^\alpha}\psi_{x/n}'(\tfrac{y}{n})\Big| +\Big|n^{\alpha-1}\sum_{y\geq n+1}\frac{a(y)}{y^\alpha}\psi_{x/n}'(\tfrac{y}{n})\Big| \\
	&\leq Kn^{\alpha-2} \Big(A(1)\sum_{y=1}^k y^{1-\alpha} +  A(k+1)\sum_{y=k+1}^n y^{1-\alpha}\Big)+K n^{\alpha-1} A(n)\sum_{y\geq n+1}  \frac{1}{y^\alpha}\\
&\leq \frac{K}{2-\alpha}\Big(A(1) (\tfrac{k}{n})^{2-\alpha}+A(k+1)\Big)+\frac{K A(n+1)}{\alpha-1}
\end{split}
\end{equation*}
for any $k<n$. Choosing, for example, $k = \sqrt n$ , the last sums vanish,  as $n \to \infty$.  Note that
\[
\mc L^+ f\big(\tfrac{x}{n}\big) = -\frac{c^+ \psi_x(y)}{\alpha y^\alpha}\Big|_{y=0}^\infty + \int_0^\infty \frac{c^+ \psi_x'(y)}{y^\alpha} dy = c^+\int_0^\infty \frac{ \psi_x'(y)}{\alpha y^\alpha} dy,
\]
since $\psi_x(y)$ is quadratic around $y =0$ and linear for $y \gg 1$.
Moreover, the first sum on the right hand side of \eqref{useeq} is just a Riemann sum for this last integral. Since the function $\frac{\psi_x'(y)}{y^\alpha}$ is continuous at $y=0$ and it decays like $\frac{1}{y^\alpha}$, this Riemann sum converges to the corresponding integral. Note that this convegence is uniform in $x$, since $\frac{\psi_x'(y)}{y^\alpha}$ is equicontinuous in $x$. Finally, since for $0<y<z$, we have that
$\Big|\frac{\psi'(z)}{z^\alpha}-\frac{\psi'(y)}{y^\alpha}\Big| 
		\leq \frac{C(z-y)}{y^\alpha} $
for some constant $C$ which depends only on $\|f''\|_\infty$, we  conclude that
\[
\Big| n^{\alpha-1} \sum_{y \geq 1} \frac{c^+}{\alpha y^\alpha} \psi_{x/n}'(\tfrac{y}{n}) - \int_{\tfrac{1}{n}}^\infty \frac{c^+ \psi'(y)}{y^\alpha}dy\Big|
		\leq C n^{\alpha-2} \sum_{y\geq 1} \frac{1}{y^\alpha}.
\]
Since the last sum is finite and $\alpha<2$, we have just shown that 
\[
\lim_{n \to \infty} | \mc L^+_nf(\tfrac{x}{n}) - \mc  L^+f(\tfrac{x}{n})| =0.
\]
Moreover, since all the constants above do not depend on $x$, the limit is uniform in $x$, showing the first half of the proposition. The second half can be proved in a similar way.

\section{Proof of Proposition \ref{p1.4.3}}\label{sec uniq ou}
For the reader's convenience, we repeat here various definitions introduced in Section \ref{s1}.
Let $\mc L$ be a generator of a L\'evy process in $\bb R$. Let $\{\mc W_t; t \geq 0\}$ be a Brownian motion on $L^2(\bb R)$ and let $\mc S = \frac{1}{2}(\mc L + \mc L^*)$ be the symmetric part of the operator $\mc L$. We say that a stochastic process $\{\mc Y_t; t \geq 0\}$ is a {\em stationary solution} of the infinite-dimensional Ornstein-Uhlenbeck equation
\begin{equation}
\label{ec1.A1}
d \mc Y_t = \mc L^* \mc Y_t dt + \sqrt{-2 \chi} \mc S d \mc W_t
\end{equation}
if for each $t \in [0,T]$ the random variable $\mc Y_t$ is a white noise of variance $\chi$ and for any differentiable function $f: [0,T] \to \bb S(\bb R)$ the process
\[
\mc Y_t(f_t) - \mc Y_0(f_0) - \int_0^t \mc Y_s( (\partial_s+\mc L) f_s) ds 
\]
is a martingale of quadratic variation
$2 \chi \int_0^t \< f_s , -\mc S f_s \> ds.$
We will prove following result:

\begin{proposition}
Two stationary solutions of \eqref{ec1.A1} have the same  law.
\end{proposition}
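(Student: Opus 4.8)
The plan is to show that any stationary solution of \eqref{ec1.A1} is a mean-zero Gaussian process whose covariance is completely determined by $\mc L$ and $\chi$; since a mean-zero Gaussian law is pinned down by its covariance, this forces two stationary solutions to have identical finite-dimensional distributions, hence the same law. The device that unlocks everything is the semigroup $P_r=e^{r\mc L}$ generated by $\mc L$. Because $\mc S=\tfrac12(\mc L+\mc L^*)\le 0$ (its associated quadratic form is the Dirichlet form $\mc E\ge 0$), the operator $\mc L$ is dissipative on $L^2(\bb R)$ and $P_r$ is a strongly continuous contraction semigroup there, with $\tfrac{d}{dr}P_r h=\mc L P_r h=P_r\mc L h$ in $L^2(\bb R)$ for $h\in\bb S(\bb R)$ (recall $\mc L h\in L^2(\bb R)$ by Proposition \ref{p1.4.1}). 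The point of introducing $P_r$ is that, given $g\in\bb S(\bb R)$ and $t\in[0,T]$, the trajectory $f_s:=P_{t-s}g$ satisfies $(\partial_s+\mc L)f_s=0$, so that if we are allowed to insert it into the martingale problem the entire drift term disappears.

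First I would extend the class of admissible test functions. The trajectory $s\mapsto P_{t-s}g$ is continuously differentiable into $L^2(\bb R)$, with $f_s$ and $\mc Lf_s=P_{t-s}\mc Lg$ in $L^2(\bb R)$ and continuous in $s$, but in general $f_s\notin\bb S(\bb R)$, since $\bb S(\bb R)$ is not left invariant by $\mc L$. This is precisely the difficulty flagged in Section \ref{s1}, and it is the main obstacle in the argument. It is resolved using stationarity: for a stationary process each $\mc Y_s$ extends to a continuous linear map $L^2(\bb R)\to L^2(P)$ with $E[\mc Y_s(h)^2]=\chi\|h\|^2$ uniformly in $s$, exactly as exploited in Lemma \ref{l1.4.2}. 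Approximating $f_s$ by $\bb S(\bb R)$-valued differentiable trajectories so that $f_s$, $\partial_sf_s$ and $\mc Lf_s$ converge in $L^2(\bb R)$ uniformly in $s$, every term of the martingale identity converges in $L^2(P)$, so the identity persists for $f_s=P_{t-s}g$. With this choice the drift vanishes and we obtain that
\[
M_r:=\mc Y_r(P_{t-r}g)-\mc Y_0(P_tg),\qquad r\in[0,t],
\]
is a continuous martingale of deterministic quadratic variation $2\chi\int_0^r\mc E(P_{t-s}g)\,ds$; in particular $M_t=\mc Y_t(g)-\mc Y_0(P_tg)$.

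A continuous martingale with deterministic quadratic variation is Gaussian (Dambis--Dubins--Schwarz) and has increments independent of the past. Combining this with the white-noise (hence Gaussian) law of each $\mc Y_t$ and the linearity of $f\mapsto\mc Y_t(f)$, an induction on the number of distinct time points shows that every vector $(\mc Y_{t_1}(g_1),\dots,\mc Y_{t_k}(g_k))$ is jointly mean-zero Gaussian: peeling off the largest time $t_k$ via $\mc Y_{t_k}(g_k)=\mc Y_{t_{k-1}}(P_{t_k-t_{k-1}}g_k)+(M_{t_k}-M_{t_{k-1}})$, the increment is Gaussian and independent of $\mc F_{t_{k-1}}$, while the remaining coordinates are jointly Gaussian by the inductive hypothesis. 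Thus $\{\mc Y_t;t\in[0,T]\}$ is a mean-zero Gaussian process.

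It remains to compute the covariance. Fix $s\le t$ and $g_1,g_2\in\bb S(\bb R)$. Writing $\mc Y_t(g_2)-\mc Y_s(P_{t-s}g_2)=M_t-M_s$, a martingale increment orthogonal in $L^2(P)$ to the $\mc F_s$-measurable variable $\mc Y_s(g_1)$, and using the polarized white-noise identity $E[\mc Y_s(\phi)\mc Y_s(\psi)]=\chi\,\<\phi,\psi\>$, we get
\[
E\big[\mc Y_s(g_1)\,\mc Y_t(g_2)\big]=E\big[\mc Y_s(g_1)\,\mc Y_s(P_{t-s}g_2)\big]=\chi\,\<g_1,P_{t-s}g_2\>.
\]
This expression depends only on $\mc L$ and $\chi$. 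Hence the finite-dimensional distributions of any stationary solution are the mean-zero Gaussian laws with covariances $\chi\,\<g_i,P_{t_j-t_i}g_j\>$ for $t_i\le t_j$, which are the same for every stationary solution. Since the evaluations $\mc Y_t(f)$, $f\in\bb S(\bb R)$, generate the Borel $\sigma$-algebra of the weak-$\star$ topology, equality of all finite-dimensional distributions yields equality of laws, proving the proposition. The only genuinely delicate step is the extension of the martingale problem to the non-Schwartz trajectory $P_{t-s}g$; everything else is bookkeeping.
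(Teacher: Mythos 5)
Your proof is correct and follows essentially the same route as the paper's Appendix C: kill the drift with the backward semigroup trajectory $f_s=P_{t-s}g$, justify this non-Schwartz test function by $L^2$-approximation using stationarity, deduce Gaussianity from the deterministic quadratic variation of $\mc M_{s,t}(g)=\mc Y_s(P_{t-s}g)-\mc Y_0(P_tg)$, and identify the covariance $\chi\,\<g_1,P_{t-s}g_2\>$. The only differences are in bookkeeping --- you spell out the joint-Gaussianity induction and obtain the covariance from martingale-increment orthogonality at time $s$ rather than the paper's time-shift --- so the argument matches the paper's proof.
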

\begin{proof}
Let $f$ be a function in $\bb S(\bb R)$ and take $t \geq 0$. Let $\{P_t; t \geq 0\}$ the semigroup associated to the generator $\mc L$, that is, $P_t = e^{t \mc L}$ for any $t \geq 0$. Since $\mc L$ is the generator of  a L\'evy process, $\{P_t; t \geq 0\}$ is a strongly continuous, contraction semigroup on $\mc C_b(\bb R)$. In particular $f_s = P_{t-s} t$ is a differentiable trajectory on $\mc C_b(\bb R)$ satisfying $\frac{d}{ds} f_s = - \mc L f_s$ for any $s \leq t$ and $f_t = f$. Since $\{P_t; t \geq 0\}$ is also  a contraction in $L^1(\bb R)$, it is a contraction in $L^2(\bb R)$. Note that $\{f_s; s \leq t\}$ is not a legitimate test function, since although $P_{t-s} f$ is infinitely differentiable, it does not satisfy the decay properties needed to be in $\bb S(\bb R)$. However, $\{f_s; s \leq t\}$ can be approximated in $L^2$ by differentiable functions $f_\epsilon: [0,t] \to \bb S(\bb R)$, justifying the use of $\{f_s; s \leq t\}$ as a test function. Since $(\partial_s + \mc L) f_s =0$, we conclude that
\[
\mc M_{s,t}(f) =: \mc Y_s(P_{t-s} f) - \mc Y_0(P_t f) 
\]
is a martingale of quadratic variation (with respect to $s$)
\[
2 \chi \int_0^t\<P_s f, -\mc S P_s f\> ds.
\]
Since the quadratic variation of $\mc M_{s,t}(f)$ is deterministic, $\{\mc M_{s,t}(f);  s\in[0,  t]\}$ and in consequence $\{\mc Y_t;  t\in [0, T]\}$ are Gaussian processes.
Note that
\[
\tfrac{d}{dt} \< P_t f, P_t f\> = 2 \<P_t f, \mc L P_t f\> = -2 \<P_t f, -\mc S P_t f\>.
\]
Therefore
\[
2 \chi \int_0^t \<P_s f, -\mc S P_s f\> = \chi\big(\<f,f\> - \<P_t f, P_t f\>\big).
\]
We conclude that $\mc Y_t(f)$ can be written as the sum of two independent Gaussian variables: $\mc Y_0(P_t f)$, which depends only on the initial law and $\mc M_{t,t}(f)$, which is independent of $\mc Y_0$. Since $\{\mc Y_t;  t\in [0, T]\}$ is a Gaussian process, it is characterized by its covariance structure. By stationarity, the computations above show that for any $0 \leq s \leq  t \leq T$ and any $f,g \in \bb S(\bb R)$,
\begin{align*}
 E[ \mc Y_t(f) \mc Y_s(g)] 
 		&=  E[ \mc Y_{t-s}(f) \mc Y_0(g)] = E[(\mc Y_0(P_{t-s}f)+\mc M_{t-s,t-s}(f)) \mc Y_0(g)]
 		\\&= \chi \<P_{t-s} f,g\>,
\end{align*}
which shows uniqueness in  law of the process $\{\mc Y_t;  t\in [0, T]\}$.
\end{proof}

\section{Auxiliary computations}~\label{aux_computations}

In this section we collect the proofs of some estimates that are needed in Section  \ref{s3}. 

\subsection{Bounds on Taylor expansions of test functions}

In the computations below, it will be useful to control the decay at infinity of the error terms of the Taylor expansion for test functions. For $g:\bb R \to \bb R$, $x \in \bb R$ and $M >0$, define
\[
\|g(x)\|_{M,\infty} = \sup_{|y-x| \leq M} | g(y) |.
\]
We have the following:
\begin{lemma}
\label{palta}
Let $g \in \bb S(\bb R)$. Then, for any $\ell \in \bb N$,
\[
\lim_{x \to \pm \infty} |x|^\ell \|g(x)\|_{M,\infty} =0.
\]
\end{lemma}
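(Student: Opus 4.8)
The plan is to exploit the rapid decay encoded in the Schwartz seminorms, which by definition of $\bb S(\bb R)$ are all finite. Recall that, taking $\ell=0$ in the seminorm $\|g\|_{k,\ell,\infty} = \sup_{x}(1+x^2)^{k/2}|g^{(\ell)}(x)|$, we have the pointwise bound
\[
|g(y)| \le \|g\|_{k,0,\infty}\,(1+y^2)^{-k/2} \quad \text{for every } y \in \bb R \text{ and every } k \in \bb N_0.
\]
The only genuine point to address is that $\|g(x)\|_{M,\infty}$ is a supremum over a window of radius $M$ centred at $x$, rather than a pointwise value; I will control this window by showing that, for $|x|$ large, every one of its points is far from the origin.

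Concretely, I would fix $\ell \in \bb N$ and choose any integer $k > \ell$. For every $y$ with $|y-x| \le M$, the reverse triangle inequality gives $|y| \ge |x| - M$, so whenever $|x| > M$ we have $1 + y^2 \ge 1 + (|x|-M)^2$. Inserting this into the displayed decay estimate and then taking the supremum over the window $\{\,y : |y-x| \le M\,\}$ yields
\[
\|g(x)\|_{M,\infty} \le \|g\|_{k,0,\infty}\,\big(1 + (|x|-M)^2\big)^{-k/2}.
\]
Multiplying by $|x|^\ell$, the right-hand side is bounded by $\|g\|_{k,0,\infty}\,|x|^\ell\big(1+(|x|-M)^2\big)^{-k/2}$, which behaves like a constant times $|x|^{\ell-k}$ as $|x| \to \infty$. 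Since $k > \ell$, this tends to $0$, and the estimate is symmetric in the sign of $x$, so it covers both limits $x \to +\infty$ and $x \to -\infty$.

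I expect no real obstacle here: the statement is a direct consequence of the definition of the Schwartz seminorms combined with the triangle inequality, and the only mild care required is translating the windowed supremum $\|g(x)\|_{M,\infty}$ into a pointwise decay bound that holds uniformly over the whole window once $|x|$ exceeds $M$. The freedom to pick $k$ as large as we like (in particular $k > \ell$) is what makes the polynomial prefactor $|x|^\ell$ harmless.
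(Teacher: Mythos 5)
Your proof is correct and follows exactly the elementary route the paper has in mind: the authors state the lemma and omit the proof as elementary, and the natural argument is precisely yours, namely bounding the windowed supremum by the Schwartz seminorm $\|g\|_{k,0,\infty}$ with $k>\ell$, using $|y|\ge |x|-M$ on the window $\{y: |y-x|\le M\}$ to obtain $|x|^{\ell}\|g(x)\|_{M,\infty}=O(|x|^{\ell-k})\to 0$. Nothing further is needed.
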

The proof of this lemma is elementary, so we omit it. We can apply this lemma to obtain the aforementioned bounds on Taylor expansions of test functions: let $f \in \bb S(\bb R)$, $x \in \bb R$ and $\ell \in \bb N$. For any $M >0$ and any $y \in \bb R$ such that $|y-x| \leq M$, we have that
\begin{equation}\label{useful_bound}
\Big| f(y) - \sum_{i=0}^{\ell-1} \frac{f^{(i)}(x)}{i !} (y-x)^\ell\Big| \leq \frac{|y-x|^\ell}{\ell !} \|f^{(\ell)}(x)\|_{M,\infty}.
\end{equation}
This is an immediate consequence of the Taylor formula with Lagrange error and Lemma \ref{palta}.

\subsection{Estimation of \eqref{imp_terms}}
\label{sec_a1}
In order to estimate the first term of \eqref{imp_terms} we first make a change of variables $z=y-x$, we fix $M$ and write it as 
\begin{equation}\label{int1}
\begin{split}
&c_1(\rho) n^{2\alpha -2} \sum_{x\in \bb Z}\sum_{|z|=1}^{Mn} \frac{1}{|z|^{2+2\alpha}} \big(f\big(\tfrac{z+x}{n}\big) - f\big(\tfrac{x}{n}\big) \big)^4\\
&+c_1(\rho) n^{2\alpha -2} \sum_{x\in \bb Z}\sum_{|z|\geq M n} \frac{1}{|z|^{2+2\alpha}} \big(f\big(\tfrac{z+x}{n}\big) - f\big(\tfrac{x}{n}\big) \big)^4.
	\end{split}
\end{equation}
  The second term  in \eqref{int1} can be bounded from above by
  $$C n^{2\alpha -2} \sum_{x\in \bb Z}\sum_{|z|\geq M n} \frac{1}{|z|^{2+2\alpha}} \big(f\big(\tfrac{z+x}{n}\big)^4+C n^{2\alpha -2} \sum_{x\in \bb Z}\sum_{|z|\geq M n} \frac{1}{|z|^{2+2\alpha}} \big(f\big(\tfrac{x}{n}\big) \big)^4.$$
  By making a change of variables, using the fact that $f\in{ \bb S}(\bb R)$ and since 
  $$ n^{1+2\alpha}\sum_{|z|> Mn}\frac{1}{|z|^{2+2\alpha}} <\infty$$  
  we conclude that the two previous sums are of order $O(n^{-2})$. To compute the first term in \eqref{int1}, we use Lemma \ref{palta}   to conclude that
 \begin{equation*}
 n^{2\alpha -2} \sum_{x \in \bb Z}\sum_{|z|=1}^{Mn}
  \frac{1}{|z|^{2+2\alpha}} \big(f\big(\tfrac{z+x}{n}\big) - f\big(\tfrac{x}{n}\big) \big)^4\leq n^{2\alpha-6}\sum_{x \in \bb Z}\|f'(x/n)\|_{M/n,\infty}^4  \sum_{|z|=1}^{Mn}|z|^{2-2\alpha}.
 \end{equation*}
 Now we use \eqref{useful_bound} and since 
 $$ n^{2\alpha-3}\sum_{|z|=1}^{Mn}\frac{1}{|z|^{2\alpha-2}}=O(n^{2\alpha-3}),$$ we obtain that the second term   
 in \eqref{int1} is of order $O(n^{2\alpha-5})$ and  therefore vanishes as $n\to\infty.$ 
The second term of \eqref{imp_terms}, by a change of variables $z=y-x$ and fixing $M$, can be written  as
\begin{equation*}
\begin{split}
&c_2(\rho) \frac{1}{n} \sum_{x\in\bb Z} \Big(\frac{1}{n}\sum_{|z|=1}^{Mn} \frac{n^{1+\alpha}}{|z|^{1+\alpha}}\big(f\big(\tfrac{z+x}{n}\big)-f\big(\tfrac{x}{n}\big)\big)^2\Big)^2\\+&
c_2(\rho) \frac{1}{n} \sum_{x\in\bb Z} \Big(\frac{1}{n}\sum_{|z|\geq Mn} \frac{n^{1+\alpha}}{|z|^{1+\alpha}}\big(f\big(\tfrac{z+x}{n}\big)-f\big(\tfrac{x}{n}\big)\big)^2\Big)^2.
	\end{split}
\end{equation*}
By repeating exactly the same computations as above, the first term of last expression is of order $O(n^{\alpha-3})$ while the second one is of order  $O(n^{-1})$. This shows that the  second term of \eqref{imp_terms} vanishes, as $n\to\infty$.
\subsection{Estimation of \eqref{int:field_a}}
\label{sec_a2} The term \eqref{int:field_a}, by a change of variables $z=y-x$ and fixing $M$, can be written as
\begin{equation*}
\begin{split}
 &c_3(\rho) \frac{1}{n^2} \sum_{x\in \bb Z}\sum_{|z|=1}^{Mn} \frac{n^{2+2\alpha}}{|z|^{2+2\alpha}} \big(f\big(\tfrac{z+x}{n}\big) -f \big( \tfrac{x}{n}\big)\big)^2\\+& c_3(\rho) \frac{1}{n^2} \sum_{x\in\bb Z} \sum_{|z|\geq Mn}\frac{n^{2+2\alpha}}{|z|^{2+2\alpha}} \big(f\big(\tfrac{z+x}{n}\big) -f \big( \tfrac{x}{n}\big)\big)^2.
 \end{split}
\end{equation*}
Repeating the same procedure as above we can see that the second term above is of order 
$O(n^{-2})$, while the first one  can be bounded from above by 
\[
Cn^{2\alpha-2}\sum_{|z|=1}^{Mn}{|z|^{-2\alpha}}.
\]
Last sum is convergent if $\alpha <1/2$ and
if $\alpha \geq 1/2$, it is divergent. When $\alpha=1/2$, the order of divergence is $\log(n)$. Therefore, we can conclude that  \eqref{int:field_a}  is of order $o(1)$ for $\alpha <1$ and bounded for $\alpha =1$.

\subsection{Proof of Lemma \ref{lem_1}}
\label{sec_a3}
The expectation in \eqref{int_3} is bounded from above by 
\begin{equation}\label{int:field_a_new}
\begin{split}
 c_4(\rho) n^{2\alpha -1} \sum_{|x-y|\geq K_n} a(y-x)^2 \big(f\big(\tfrac{y}{n}\big) -f \big( \tfrac{x}{n}\big)\big)^2,\end{split}
\end{equation}
and, by the change of variables $z=y-x$ and by fixing $M$, we can write it as 
\begin{equation*}
\begin{split}
 &c_4(\rho) n^{2\alpha -1} \sum_{x\in\bb Z}\sum_{K_n\leq |z|\leq Mn} \frac{1}{|z|^{2+2\alpha}} \big(f\big(\tfrac{z+x}{n}\big) -f \big( \tfrac{x}{n}\big)\big)^2\\+& c_4(\rho) n^{2\alpha -1} \sum_{x\in\bb Z}\sum_{|z|\geq Mn}\frac{1}{|z|^{2+2\alpha}}\big(f\big(\tfrac{z+x}{n}\big) -f \big( \tfrac{x}{n}\big)\big)^2.
 \end{split}
\end{equation*}
Now, doing the same arguments as above, we see that the first term in last expression if of order $O(n^{-1})$, while the second one can be bounded from above by 
\begin{equation*}
\begin{split}
C n^{2\alpha -2}\sum_{ |z|=K_n}^{Mn}|z|^{-2\alpha}
 \end{split}
\end{equation*}
which is  of order $\frac{n^{2\alpha-2}}{K_n^{2\alpha-1}}$ and vanishes if $K_n \gg n^{\frac{2\alpha-2}{2\alpha-1}}$. 

\subsection{Proof of Lemma \ref{lem_2}}\label{sec_a4}
Note that the expectation in \eqref{int_4} is bounded from above by 
\begin{equation*}
c_4(\rho) n^{2\alpha-1} \!\!\!\!\sum_{|y-x| \leq K}\!\!\!\! a^2(y-x)  \frac{(y-x)^4}{n^4} \|f''((y-x)/n)\|^2_{K/n,\infty}.
\end{equation*}
As above, by making the change of variables $z=y-x$ and using the fact that $f\in\bb S(\bb R)$ we can bound the previous expression by
\begin{equation*}
Cn^{2\alpha-4}\sum_{|z| \leq K}z^{2-2\alpha}
\end{equation*}
which is of order $\frac{K^{3-2\alpha}}{n^{4-2\alpha}}.$
Since $K \gg n^{\frac{2\alpha-2}{2\alpha-1}}$, the expectation vanishes as $n\to\infty$.

\subsection{Proofs of Lemmas \ref{lem_3} and \ref{lem_4}}\label{sec_a5}
We start with the proof of Lemma \ref{lem_3}. By the Cauchy-Schwarz inequality and by \eqref{eedes}, the expectation in \eqref{ec3.2.3} is bounded from above by
\begin{equation}\label{bound_1}
Kt^2 n^{2\alpha-3} \Big( \sum_{y=1}^{K-1} y a(y)\Big)^2\sum_x f'\big(\tfrac{x}{n}\big)^2 \int \psi_x^{K} (\eta)^2 \mu_\rho(d\eta) \leq C(f,\rho)\frac{ t^2 n^{2\alpha-2}}{K},
\end{equation}
which vanishes, as $n\to\infty$, if $K \gg n^{2\alpha-2}$.  Above we used the fact that $\sum_{y=1}^{K-1} y a(y)<\infty$, since we are in the regime $\alpha>1$. 
Now we prove Lemma \ref{lem_4}. For each $j=1,\cdots, K$, we use
  Proposition \ref{p2.3.4} with $F(\eta)=\sum_{x \in \mc Z_j}F_x(\eta)$ and $$F_x(\eta):=n^{\alpha -3/2}  \sum_{y=1}^{K-1}y a(y) \big\{ \bar{\eta}(x) \bar{\eta}(x+y) - \psi_x^{K} (\eta)\big\} f'\big(\tfrac{x}{n} \big).$$ Therefore, the expectation in \eqref{ec3.2.3} is bounded from above by
\begin{equation}\label{bound_2}
c_4(\rho) t K \frac{K^{\alpha}}{n^\alpha} n^{2\alpha-3} \sum_{x} f'\big( \tfrac{x}{n}\big)^2 \sum_{y=1}^{K-1} y^2 a(y)^2 \leq C(f,\rho) \frac{ K^{1+\alpha} t}{n^{2-\alpha}}.
\end{equation}
We note that above we  used the fact that $\sum_{y=1}^{K-1} y^2 a(y)^2<\infty$. We conclude that last term vanishes if $K \ll n^{\frac{2-\alpha}{1+\alpha}}$.

\subsection{Proof of Lemma \ref{lem_5}}\label{sec_a6}

By the Minkowski's inequality and the  estimate in \eqref{est1}, the expectation in \eqref{ec3.2.2_new_new_new}
is bounded from above  by
\[
tC(f,\rho)\Big(\sum_{i=1}^\ell\sqrt{\frac{n^{\gamma_i (1+\alpha)}}{n^{\gamma_{i-1}(2\alpha-1)+2-\alpha}}}\Big)^2=C(f,\rho) \frac{t \ell^2}{n^\delta}.
\]

\subsection{Proof of Lemma \ref{lem_multi_scale_1}}\label{sec_a8}

 At at a first glance we note that
by Proposition \ref{p2.3.4} and by \eqref{eedes}, the expectation in  \eqref{multiscaleCPAM}
is bounded by
\begin{equation}\label{renormstep}
C(\rho,f) t n^{2\alpha-2} \frac{(K^{j+1})^{1+\alpha}}{n^\alpha}  \int (\Psi_x^j)^2 d\mu_\rho \leq C(f,\rho) t\frac{ (K^{j+1})^{\alpha+1}}{(K^j)^2n^{2-\alpha}}.
\end{equation}
But this bound will not be sufficient for us. Therefore,  in order to prove the lemma we use the multiscale structure introduced in \cite{GJCPAM}. For that purpose, 
let $j$ be fixed and take the sequence of boxes $\ell_0=K^j$,  $\ell_1=2\ell_0$ and for $p\geq 2$, $\ell_p=2^p\ell_0$. Suppose that there exists  $P$ sufficiently big such that $2^P \ell_0=K^{j+1}$. 
Performing a telescopic  sum and using  the Minkowski's inequality together with \eqref{renormstep}, the expectation on the left hand side of \eqref{multiscaleCPAM} is bounded from above by 
\begin{equation}\label{renorm_2}
C(f,\rho)t\Big(\sum_{p=1}^P{\frac{(2^{p+1}\ell_0)^{\alpha-1}}{n^{2-\alpha}(2^p\ell_0)^{2}}}\Big)^2\leq C(f,\rho)t \frac{(2^P\ell_0)^{\alpha-1}}{n^{2-\alpha}}.
\end{equation}
This proves \eqref{multiscaleCPAM} for the case $K^{j+1}=2^P \ell_0$. For the other cases, we take $P$ sufficiently big such that $2^P\ell_0\leq K^{j+1}\leq 2^{P+1}\ell_0$. Let $\tilde{\Psi}_x^i=\Psi_x^{2^i\ell_0}$. 
Then, by using the inequality $(x+y)^2\leq 2x^2+y^2$, the expectation on the left hand side of \eqref{multiscaleCPAM} is bounded from above by
\begin{equation}\begin{split}\label{multiscaleCPAM2}
&2\bb{E}_n\Big[\Big(\int_0^t n^{\alpha-3/2} \sum_{x} \big( \Psi_x^j(\eta_s^n) - \tilde{\Psi}_x^{j}(\eta_s^n)\big) f' \big(\tfrac{x}{n} \big) ds\Big)^2\Big]\\
+&2\bb{E}_n\Big[\Big(\int_0^t n^{\alpha-3/2} \sum_{x} \big( \tilde{\Psi}_x^{j}(\eta_s^n)- {\Psi}_x^{j+1}(\eta_s^n)\big) f' \big(\tfrac{x}{n} \big) ds\Big)^2\Big].
\end{split}
\end{equation}
From \eqref{renorm_2}, the first expectation in \eqref{multiscaleCPAM2} is bounded from above by $$C(f,\rho)t \frac{(2^P\ell_0)^{\alpha-1}}{n^{2-\alpha}},$$ while from \eqref{renormstep} the second expectation is bounded from above by
\begin{equation*}
C(f,\rho) t\frac{ (K^{j+1})^{\alpha+1}}{(2^P\ell_0)^2n^{2-\alpha}}\leq C(f,\rho) t\frac{ (2^{P+1}\ell_0)^{\alpha+1}}{(2^P\ell_0)^2n^{2-\alpha}}.
\end{equation*}
Putting together the two previous estimates, the proof of \eqref{multiscaleCPAM} ends.

\subsection{Proof of \eqref{bound0}}\label{sec_a9}
First we compute the price to double the size the box. For that purpose, let $M$ be given. By Proposition \ref{p2.3.4} and \eqref{eedes} we have that
\[
\bb E_n \Big[ \Big( \int_0^t \sum_x \big( \psi_x^M(\eta_s^n) -\psi_x^{2M}(\eta_s^n)\big) f'\big(\tfrac{x}{n} \big) ds \Big)^2 \Big] \leq C(f,\rho) t \sqrt{\frac{M}{n}}.
\]
Define $M_0=M$ and $M_i = 2^i M$ for $i \in \bb N$. By writing a telescopic sum, using Minkowski's inequality and the previous estimate, we see that
\[
\begin{split}
&\bb E_n \Big[ \Big( \int_0^t \sum_x \big( \psi_x^M(\eta_s^n) -\psi_x^{M_\ell}(\eta_s^n)\big) f'\big(\tfrac{x}{n} \big) ds \Big)^2 \Big]\\
=&\bb E_n \Big[ \Big( \int_0^t \sum_x \sum_{i=0}{\ell-1}\big( \psi_x^{2^iM}(\eta_s^n) -\psi_x^{2^{i+1}M}(\eta_s^n)\big) f'\big(\tfrac{x}{n} \big) ds \Big)^2 \Big]\\
	\leq& \frac{C(f,\rho) t}{\sqrt n} \Big(\sum_{i=0}^{\ell -1} M_i^{1/4}\Big)^2\leq C(f,\rho)t \sqrt{\frac{M_\ell}{n}}.
\end{split}\]
Taking $M= K_n$ and $M_\ell = \varepsilon n$ the proof ends.


\begin{thebibliography}{99}

\bibitem{AmiCorQua}
G. Amir, I.   Corwin and J.  Quastel : \emph{Probability distribution of the free energy of the continuum directed random polymer in 1 + 1 dimensions},  Comm. Pure Appl. Math., { 64}, 466--537 (2011).

\bibitem{BerGonJar}
C.~Bernardin, P.~Gon\c calves and M.~Jara: \textsl{3/4-Fractional superdiffusion in a system of harmonic oscillators perturbed by a conservative noise}, Arch. Rat. Mech.  Anal.,  220 no. 2, 505--542 (2016).

\bibitem{BerGonSet}
C. Bernardin, P.  Gon\c calves and S.  Sethuraman: \textsl{Occupation times of long-range exclusion and connections to KPZ class exponents}, Prob. Theo. Relat. Fields, Volume 166, Issue 1, 365--428 (2016).


\bibitem{BorIbra} 
A.~N.~Borodin and I.~ A.~Ibragimov: \textsl{Limit Theorems for Functionals of Random Walks}, Proceedings of the Steklov Institute of Mathematics, American Mathematical Society, 
195 no.  2, (1995)

\bibitem{Brei}
L.~Breiman: \textsl{Probability}, Classics in Applied Mathematics, Siam, 2nd edition, (1992).

\bibitem{BroRos}
T.~Brox and H.~Rost: \textsl{Equilibrium fluctuations of stochastic particle systems: the role of conserved quantities}, Ann. Prob., 12 no. 3, 742--759 (1984).

\bibitem{Cap}
P. Caputo: \textsl{Uniform Poincar\'e inequalities for unbounded conservative spin systems: the non-interacting case},  Stoch. Proc.  Appl., 106 n. 2, 223--244 (2003).

\bibitem{CapLigRit}
P.~Caputo, T.~Liggett and T.Richthammer: \textsl{Proof of Aldous' spectral gap conjecture}, J. Amer. Math. Soc.,  23, 831--851 (2010).

\bibitem{ChaLanOll}
C.~C. Chang, C.~Landim and S.~Olla: \textsl{ Equilibrium fluctuations of
  asymmetric simple exclusion processes in dimension {$d\geq 3$}}, Prob. Theo. Relat. Fields,  119 no. 3, 381--409 (2001).
  
\bibitem{Coc}
C. Cocozza-Thivent: \textsl{Processus des misanthropes}, Prob. Theo. Relat. Fields, 70 no. 4,  509--523 (1985).

\bibitem{Cor}
I.~Corwin: \textsl{The Kardar-Parisi-Zhang equation and universality class}, Random Matrices: Theory and Applications,   1, 76 pages (2012).

\bibitem{CorQuaRem}
I.~Corwin, J.~Quastel, D.~Remenik: \textsl{Renormalization fixed point of the KPZ universality class}, J. Stat. Phys. 160, 815-834 (2015).

\bibitem{Gon}
P.~Gon\c calves: \textsl{Central Limit Theorem for a Tagged Particle in Asymmetric Simple Exclusion}, Stoch. Proc.  Appl., 118, 474--502 (2008).


\bibitem{GonJar1}
P.~Gon\c calves and M.~Jara: \textsl{Nonlinear fluctuations of weakly asymmetric interacting particle systems}, Arch. Rat. Mech. Anal.,  212 no. 2, 597--644 (2014).

\bibitem{GonJar2}
P.~Gon\c calves and M.~Jara: \textsl{Crossover to the KPZ equation}, Annales Henri Poincar\'e, 13 no. 4, 813--826 (2012).

\bibitem{GJCPAM}
P.~Gon\c calves and M.~Jara: \textsl{Scaling limits of additive functionals of interacting particle systems}, Comm. Pure  Appl. Math., 66 no. 5, 649--677 (2013).

\bibitem{GonJarSet}
P.~Gon\c calves, M.~Jara and S. Sethuraman: \textsl{A stochastic Burgers equation from a class of microscopic interactions}, Ann. Prob., 43 no. 1, 286--338 (2015).  

\bibitem{GubJar}
M.~Gubinelli and M.~Jara: \textsl{Regularization by noise and stochastic Burgers equations}, Stochastic Partial Differential Equations: Analysis and Computations,  1 no. 2,  325--350 (2013).

\bibitem{GubPer}
M.~Gubinelli and N.~Perkowski: \textsl{Energy solutions of KPZ are unique}, arXiv:1508.07764.




\bibitem{Hai1} 
M.~Hairer: \textsl{Solving the KPZ equation},  
Annals of Maths, 178  no. 2, 559--664 (2013). 

\bibitem{Hai2}
M.~Hairer: \textsl{A theory of regularity structures}, 
Invent. Math. 198 no. 2, 269--504 (2014).

\bibitem{HaiQua} 
M.~Hairer, J.~Quastel: \textsl{A class of growth models rescaling to KPZ}, arXiv:1512.07845.

\bibitem{Jar0} 
M.~Jara: \textsl{Non-equilibrium scaling limit for a tagged particle in the simple exclusion process with long jumps},  Comm. Pure  Appl. Math., 62 no. 2, 198--214 (2009).

\bibitem{Jar1} 
M.~Jara: \textsl{Current and density fluctuations for interacting particle systems with anomalous diffusive behavior},  arXiv:0901.0229.

\bibitem{Jar2} 
M.~Jara: \textsl{Spectral gap for random walks with long jumps and applications}, in preparation.

\bibitem{JarKomOll}
M.~Jara, T.~Komorowski and S.~Olla: \textsl{Superdiffusion of energy in a chain of harmonic oscillators with noise}, Comm. Math. Phys., 339 n. 2, 407--453 (2015). 



\bibitem{KipLan}
C.~Kipnis, C. and C.~Landim: \textsl{Scaling Limits of Interacting Particle Systems}. Springer-Verlag, New York (1999).

\bibitem{KipVar}
C.~Kipnis and S.~R.~S. Varadhan: \textsl{ Central limit theorem for additive
  functionals of reversible {M}arkov processes and applications to simple
  exclusions}, Comm. Math. Phys., 104 no. 1, 1--19 (1986).
  
  
\bibitem{KomLanOll}
T. Komorowski, C. Landim and S. Olla, \emph{Fluctuations in Markov processes}, Grundlehren der Mathematischen
Wissenschaften [Fundamental Principles of Mathematical Sciences], Volume 345, Springer,
Heidelberg (2012).


\bibitem{LanPanYau}
 C. Landim, G. Panizo and  H. T. Yau: \textsl{Spectral gap and logarithmic Sobolev inequality for unbounded conservative spin systems}, Annales I.H.Poincar\'e, 38 n. 5,  739--777 (2002). 
  
\bibitem{Lig}
T.~M. Liggett: \textsl{ Interacting particle systems}, Springer-Verlag, Berlin (2005).

\bibitem{Mit}
I.~Mitoma: \textsl{Tightness of probabilities on {$C([0,1];{\mathcal
  S}\sp{\prime} )$} and {$D([0,1];{\mathcal S}\sp{\prime} )$}}, Ann. Prob., 11 no. 4, 989--999 (1983).

\bibitem{Wal}
J. B. Walsh: {\textsl {An Introduction to Stochastic Partial Differential Equations,}}  École d'ete de probabilites de Saint-Flour, XIV 1984, 265-439, Lecture Notes in Math, 1180, Springer, Berlin (1986).

\bibitem{Whi}
W.~Whitt: \textsl{Proofs of the martingale FCLT},   Probability Surveys, 4, 268--302 (2007).



\end{thebibliography}
\end{document}